\numberwithin{equation}{section}
\newtheorem{theorem}{Theorem}[section]
\newtheorem{proposition}[theorem]{Proposition}
\newtheorem{lemma}[theorem]{Lemma}
\theoremstyle{definition}
\def\t{\tilde}
\def\XXint#1#2#3{{\setbox0=\hbox{$#1{#2#3}{\int}$}
     \vcenter{\hbox{$#2#3$}}\kern-.5\wd0}}
\def\l{\langle}
\def\r{\rangle}
\def\t{\tilde}
\def\p{\partial}
\def\pai{\mbox{\boldmath$\pi$}}
\newcommand{\R}{\mathbb{R}}
\newcommand{\e}{\varepsilon}
\begin{document}
\title[collapsing singularity]{Sharp estimates for solutions of mean field equation with collapsing singularity}
\author{Youngae Lee}
\address{Youngae ~Lee,~National Institute for Mathematical Sciences, 70 Yuseong-daero 1689 beon-gil, Yuseong-gu, Daejeon, 34047, Republic of Korea}
\email{youngaelee0531@gmail.com}
\author{ Chang-shou Lin}
\address{ Chang-shou ~Lin,~Taida Institute for Mathematical Sciences, Center for Advanced Study in
Theoretical Sciences, National Taiwan University, Taipei 106, Taiwan}
\email{cslin@math.ntu.edu.tw}
\author{Gabriella Tarantello}
\address{Dipartimento di Matematica, Universit$\grave{a}$ di Roma "Tor Vergata", via della Ricerca Scientifica, 00133 Rome, Italy}
\email{tarantel@mat.uniroma2.it}
\author{Wen Yang}
\address{ Wen ~Yang,~Center for Advanced Study in Theoretical Science,National Taiwan University, No.1, Sec. 4, Roosevelt Road, Taipei 106, Taiwan}
\email{math.yangwen@gmail.com}

\date{}
\begin{abstract}
The pioneering work of Brezis-Merle \cite{bm}, Li-Shafrir \cite{ls}, Li \cite{l} and Bartolucci-Tarantello \cite{bt1} showed that any sequence of blow up  solutions for (singular) mean field equations of Liouville type must exhibit  a  "mass concentration" property. A typical situation of blow-up occurs when we let the singular (vortex) points involved in the equation (see  (\ref{0.0}) below)  collapse together.  However  in this case Lin-Tarantello in \cite{lt}  pointed out that the phenomenon:  "bubbling implies mass concentration"  might not occur and new scenarios open for investigation. In this paper, we present two explicit examples which illustrate (with mathematical rigor) how a "non-concentration" situation does happen and its new features.  Among other facts, we show that in certain situations, the collapsing rate of the singularities can be used as blow up parameter to describe the bubbling properties of the solution-sequence. In this way we are able to establish accurate estimates around the blow-up points which we hope to use towards a degree counting formula for the shadow system (\ref{1.14}) below.
\end{abstract}

 \maketitle

\section{Introduction}
In this paper, we wish to discuss the blow-up behaviour for solutions of the following mean field equation of Liouville type:
\begin{align}
\label{0.0}
\Delta u^*+\rho\left(\frac{h^*e^{u^*}}{\int_Mh^*e^{u^*}}-\frac{1}{|M|}\right)
=4\pi\sum_{i=1}^N\alpha_i(\delta_{q_i}-\frac{1}{|M|})~\mathrm{in}~M,
\end{align}
where $(M,g)$ is a compact Riemann surface and $|M|$ is its area. For simplicity we assume the area $|M|=1.$ Here $\Delta$ stands for the Beltrami-Laplacian operator on $(M,g)$ and $q_1,q_2,\cdots,q_N$ are given distinct points in $M.$ For convenience, we let $S=\{q_1,q_2,\cdots,q_N\}.$ Throughout this paper, we always assume that $h^*(x)$ is a positive $C^2$ function in $M$ and $u^*\in\mathring{H}^1(M),$ where
$$\mathring{H}^1(M):=\{f\in H^1(M)\mid \int_Mf=0\}.$$
Equation (\ref{0.0}) arises in many different areas of mathematics and physics. On the flat torus, the following singular Liouville equation
\begin{align}
\label{0.1}
\Delta u+e^{u}=\rho\delta_0
\end{align}
has attracted a lot of attention in the past and recent years. By integrating equation (\ref{0.1}), we can easily re-stated it equivalently as the following mean field equation:
\begin{align*}
\Delta u+\rho\left(\frac{e^u}{\int_Te^u}-\frac{1}{|T|}\right)=\rho(\delta_0-\frac{1}{|T|})~\mathrm{on}~T.
\end{align*}
In geometry, equation (\ref{0.1}) is related to the prescribed Gaussian curvature problem. Indeed, we may consider more generally the following equation:
\begin{align}
\label{0.2}
\Delta u+he^{u}-2k=4\pi\sum_{i=1}^N\alpha_i\delta_{q_i},
\end{align}
where $k$ is the constant Gaussian curvature of the given metric $g$ and $h(x)$ is a positive function on $M.$ For any solution $u$ of (\ref{0.2}), we obtain the new metric $\tilde g:=e^{v}g$ (where $v=u-\log 2$) with Gaussian curvature $\tilde{k}(x)=h(x)$ away from the singular points $q_i$, which represent conical singularities for $(M,\tilde{g})$. Again, by integrating the equation (\ref{0.2}), we can easily see that $u(x)$ satisfies equation (\ref{0.0}) with $\rho=2k+4\pi\sum_{i=1}^N\alpha_i$. Hence, we can regard (\ref{0.2}) as a particular case of (\ref{0.0}). When $M=\mathbb{S}^2$ and $\alpha_i=0 ~\forall i=1,...,N,$ then equation (\ref{0.2}) is related to the well-known Nirenberg problem. Moreover, if the parameter $\rho=4\pi\sum_{i=1}^N\alpha_i$ and $M$ is a flat surface (for example, a flat torus), then equation (\ref{0.0}) can be viewed as an integrable system, related to the classical Lame equation and the Painleve VI equation (see \cite{clw,cklw} for details). In physics, equation (\ref{0.0}) is related to the self-dual equations governing vortices for the relativistic Chern-Simons-Higgs model, and in this context the Dirac poles $q_i$ $i=1,\cdots,N$ represent the distinct \textit{vortex points} with multiplicity $\alpha_i \in \mathbb{N}$. We refer the readers to \cite{bamar, bmar, bt1, bclt, bm, cgy, ck,cl1, cl2, cl3, cl4,  l1, lw0,  ly1,  m1, m2, nt1, nt2,t, y,y2} for details.

In the seminal work \cite{bm}, Brezis and Merle initiated the study of the blow up behavior of solutions for Liouville-type equations. On the basis of the results in \cite{bm}, it is possible to derive that blow-up is always associated to a "concentration" property. More precisely, the following holds:
\medskip

\noindent {\bf{Theorem A}}. \cite{bm}  {\em  Suppose $S=\emptyset$ (or equivalently $\alpha_i =0,  \forall i=1,...,N$) $h^*$ is a positive smooth function and let $u^*_k$ be a sequence of blow up solutions for \eqref{0.0}, that is: $\text{max}_M\ u_k^* \rightarrow +\infty$ as $k \rightarrow +\infty$. Then there is a non- empty finite set $\mathcal{B}$ (blow up set)
such that,
 \[\rho\frac{h^*e^{u_k^*}}{\int_Mh^*e^{u_k^*}\mathrm{d}v_g}\to\sum_{p\in \mathcal{B}}\beta_p\delta_p,\ \ \textit{where}\ \ \beta_p\ge 4\pi.\]}

\indent The authors in \cite{bm} conjectured that the local mass $\beta_p\in 8\pi\mathbb{N}$ for any $p\in\mathcal{B},$ ( $\mathbb{N}$ is the set of natural numbers). This fact was proved by Li and Shafrir \cite{ls} in a more general setting. Actually, in the context of Theorem A, the authors in \cite{ls} showed that the local mass $\beta_p$ equals $8\pi$ exactly. In case $S\neq\emptyset$,  i.e. equation (\ref{0.0}) includes Dirac measures supported at each point in $S$, then Bartolucci and the third author of this article, in \cite{bt1} extended Theorem A by showing that, when a blow up point in $\mathcal{B}$ coincides with some $q_i\in S$, then the "concentration" properties stated above continues to hold with $\beta_{q_i}=8\pi(1+\alpha_i)$.\\
For equation \eqref{0.0}, we call $\rho\frac{h^*e^{u^*}}{\int_Mh^*e^{u^*}\mathrm{d}v_g}$ the \textit{mass distribution} of the solution $u^*$. Following this terminology, then Theorem A states that: if the solution sequence $u_k^*$ blows up then its mass distribution is {\em concentrated}.\\

One of the most physically interesting situation of blow-up occurs when we let different vortex points (i.e. the Dirac poles in (\ref{0.0})) collapse into each other. In this situation, blow-up is registered also by topological considerations. Indeed, we observe a "jump" on the value of the Leray-Schauder degree associated to (\ref{0.0}), when we let different $q_i's$ merge together, see \cite{cl4}. However, it was recently observed in \cite{lt} that, in contrast to \cite{bt1,bm}, such blow-up phenomenon is not characterized by the "concentration" property of its mass distribution. To be more precise, for $t \in (0,1)$ and $1\leq d\leq N$ we let $u_t^* \in \mathring{H}_{1}(M)$ satisfy:
\begin{equation}
\label {0.3}
\Delta u^*_t+\rho\left(\frac{h^*e^{u^*_t}}{\int_Mh^*e^{u^*_t}\mathrm{d}v_g}-1\right)=4\pi\sum_{i=1}^d\alpha_i(\delta_{q_i(t)}-1)+ 4\pi\sum_{i=d+1}^N\alpha_{i}(\delta_{q_i}-1)\ \textrm{on} \ M.
\end{equation}
Clearly the last summation in the right hand side of (\ref{0.3}) is included only when $d<N$.\\
Throughout the paper, we always assume that $\lim_{t\to0}q_i(t)= \mathfrak{q}\notin\{q_{d+1}, \cdots, q_N\}$, $\forall\ i = 1, \cdots , d$ and $q_i(t) \neq q_j(t)$ for $i \neq j \in \{ 1,\cdots,d \}$.  Then the following holds:
\medskip

\noindent {\bf{Theorem B.1}}. \cite{lt}  {\em  Assume that $\alpha_{i}\in\mathbb{N},~1\leq i\leq N$, $h^*$ is a smooth positive function, and $\rho\in(8\pi,16\pi)$.
Suppose that  $u_t^*$ is a blow up solutions of \eqref{0.3} as $t\to0$. Then $u_t^*\to w^*$ uniformly in $C_{\mathrm{loc}}^2(M\setminus\{\mathfrak{q}\})$, and $w^*$ satisfies, }
\begin{equation}\label{0.31}
\begin{aligned}
&\Delta w^*+(\rho-8\pi)\Big(\frac{h^*e^{w^*}}{\int_Mh^*e^{w^*}\mathrm{d}v_g}-1\Big) =4\pi(\sum\limits_{i=1}^{d}\alpha_i-2)(\delta_{\mathfrak{q}}-1)+4\pi\sum_{j=d+1}^N\alpha_{j}(\delta_{q_j}-1).
\end{aligned}
\end{equation}
An analogous phenomenon occurs also when we let $u_k^*$ to satisfy a "regularization" of problem (\ref{0.0}) in the sense that, the Dirac measures on the right hand side of (\ref{0.0}) are replaced by their convolution with smooth mollifiers. We point out that this situation describes a typical blow-up scenario in the context of Liouville systems, where much of the bubbling phenomenon still needs to be understood.

In \cite{lt} we find the following result,
\medskip

\noindent {\bf Theorem B.2.} {\em Let $\mathbb{S}^2$ be the standard $2$-sphere and let $u^*_k \in \mathring{H}^{1}(\mathbb{S}^2)$ satisfy:
\begin{equation}
\label{0.32}
\Delta u^*_k + \rho(\frac{e^{u^*_k}}{\int_{\mathbb{S}^2} e^{u^*_k}} - \frac{1}{4\pi}) = 4\pi \alpha(h_k - \frac{1}{4\pi})\ \ \text{on}\ \ \mathbb{S}^2,
\end{equation}
with (suitable) $ 0<h_k \in C^2(\mathbb{S}^2):\int_{\mathbb{S}^2}h_k = 1$ and $h_k \rightharpoonup \delta_p$, weakly in the sense of measure in $\mathbb{S}^2$ for some $p \in \mathbb{S}^2$. For $1<\alpha<3$, there exists $\rho_{\alpha}^*\in(4\pi(\alpha+1),8\pi\min\{2,\alpha\})$ such that if
\begin{equation}
\label{0.33}
\begin{aligned}
4\pi(\alpha+1)<\rho<\rho_{\alpha}^*
\end{aligned}
\end{equation}
then $u_k^*$ blows up only at $p$, namely along a subsequence the following hold:
\begin{equation*}
\begin{aligned}
&u_k^* (p_k)=\max\limits_{\mathbb{S}^2} u^*_k \rightarrow + \infty,\ \ p_k \rightarrow p,\\
u^*_k&\rightarrow u\ \ \text{in}\ \ C^2_{\mathrm{loc}}(\mathbb{S}^2\backslash\{p\}),\ \mbox{as}\ k \rightarrow +\infty,
\end{aligned}
\end{equation*}
and $u$ satisfies
\begin{equation}
\label{0.34}
\Delta u+(\rho-8\pi)(\frac{e^u}{\int_{\mathbb{S}^2}e^u}-\frac{1}{4\pi})=4\pi(\alpha-2)(\delta_p-\frac{1}{4\pi})
\ \ \text{on}\ \ \mathbb{S}^2.
\end{equation}}

The origin of the value $\rho_{\alpha}^*$ will become clear from Proposition \ref{pr2.new} and Remark 2.1 below. Notice that, by the assumptions of Theorem B.1 and Theorem B.2, we have: $0 < \rho-8\pi < 8\pi$; and so we can ensure that both problem (\ref{0.31}) and (\ref{0.34}) admit a solution on the basis of Onofri inequality. The authors in \cite{lt} just give a sketch of the proof of Theorem B.1 and Theorem B.2 and one of the purposes of this paper is to provide a detailed proof of those results together with some new facts.

For example, we show that the following general version of Theorem B.1 holds:

\begin{theorem}
\label{th1.1}
Assume $\alpha_i\in \mathbb{N},~1\leq i\leq N.$ Let $u_t^*$ be a sequence of blow up solutions of (\ref{0.3}) with $\rho \notin 8\pi \mathbb{N}.$ Then $u_t^*$ blows up only at $\mathfrak{q}\in M$. Furthermore, there exist a function $w^*$ such that
$$u_t^*\to w^*~\mathrm{in}~C_{\mathrm{loc}}^2(M\setminus\{\mathfrak{q}\})$$
as $t\to0$, and $w^*$ satisfies:
\begin{equation}
\label{0.8}
\Delta w^*+(\rho-8m\pi)\left(\frac{h^*e^{w^*}}{\int_Mh^*e^{w^*}}-1\right)
=4\pi\left(\sum_{i=1}^d\alpha_i-2m\right)(\delta_q-1) + 4\pi\sum\limits_{i=d+1}^N \alpha_i (\delta_{q_i}-1),
\end{equation}
for some $m\in\mathbb{N}$ with $1\leq m\leq[\frac12\sum_{i=1}^d\alpha_i]$ \footnote{$[x]$ stands for the integer part of $x$.} and $\rho > 8m\pi $.
Moreover, $$\lim_{t\to0} \int_{M} h^*e^{u^*_t} = \frac{\rho}{\rho - 8m\pi}\int_{M} h^*e^{w^*}.$$
\end{theorem}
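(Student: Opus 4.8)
The plan is to follow the now-standard blow-up analysis scheme for singular mean field equations, adapted to the collapsing-singularity setting. First I would set up the rescaled mass parameter: writing $\sigma_t := \int_M h^* e^{u^*_t}$, normalize $v_t := u^*_t - \log \sigma_t + \log \rho$ so that $v_t$ solves $\Delta v_t + h^* e^{v_t} = \rho + 4\pi\sum_i \alpha_i(\delta_{q_i(t)} - 1) + 4\pi\sum_{i>d}\alpha_i(\delta_{q_i}-1)$ with $\int_M h^* e^{v_t} = \rho$. Since $\rho \notin 8\pi\mathbb{N}$, the solution sequence cannot converge (else $\rho$ would be a critical value and compactness would follow from the Brezis--Merle/Li--Shafrir theory away from $S$), so blow-up genuinely occurs. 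The first key step is to show, using Theorem A together with the Bartolucci--Tarantello refinement quoted in the excerpt (applied on fixed small balls away from $\{q_{d+1},\dots,q_N\}$ once $t$ is small), that the blow-up set $\mathcal B$ is nonempty and finite, and that away from $\mathfrak{q}$ no blow-up can occur: any blow-up point $p \neq \mathfrak q$ carries local mass $8\pi$ (if $p \notin S$) or $8\pi(1+\alpha_j)$ (if $p = q_j$, $j > d$), and a standard Pohozaev/total-mass argument combined with the assumption $\rho \notin 8\pi\mathbb{N}$ rules out all such configurations unless all mass concentrates at $\mathfrak q$. This uses crucially that the $q_i(t)$ all merge at $\mathfrak q$ and that the $\alpha_i$ are integers, so the singular part near $\mathfrak q$ has integer total strength $\sum_{i=1}^d \alpha_i$.

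The second key step is to compute the local mass at $\mathfrak q$. Here the collapsing singularities must be handled carefully: near $\mathfrak q$ we perform a further rescaling at the rate governed by the mutual distances of the $q_i(t)$, and the limiting profile is an entire solution of a Liouville-type equation on $\mathbb{R}^2$ with $d$ punctures of weights $\alpha_i$. By the classification of such entire solutions (finite-mass Liouville equations with quantized singular data, the $\alpha_i$ integers playing the role of removable-type singularities that can be ``absorbed'' into the bubble), the local mass at $\mathfrak q$ must be $8\pi\left(m + \frac12\bigl(\sum_{i=1}^d\alpha_i - 2m\bigr) + 1\right)$-type, i.e. equals $8\pi m$ for some integer $m$ with $1 \le m \le \lfloor \frac12\sum_{i=1}^d \alpha_i\rfloor$; the upper bound comes from the fact that the residual singular strength $\sum_{i=1}^d\alpha_i - 2m$ at $\mathfrak q$ in the limit equation \eqref{0.8} must remain $\ge 0$ (so that the limit problem is of the admissible form, and in particular a genuine solution $w^*$ with finite energy exists — consistent with $\rho - 8m\pi > 0$). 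Combined with the total mass identity $\sigma_t^{-1}\rho \int_M h^* e^{v_t} = \rho$ and the split $\rho = (\text{mass at }\mathfrak q) + (\text{mass escaping to }w^*)$, one reads off $\rho - 8m\pi$ as the effective parameter for the limit equation.

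The third step is to upgrade weak convergence to $C^2_{\mathrm{loc}}(M\setminus\{\mathfrak q\})$ convergence of $u^*_t$ to a limit $w^*$: away from $\mathfrak q$ the right-hand side of \eqref{0.3} is smooth and bounded uniformly in $t$ (for $t$ small the moving poles are confined near $\mathfrak q$), the mass distribution $\rho h^* e^{u^*_t}/\sigma_t$ is bounded in $L^1$ and, by the no-blow-up-away-from-$\mathfrak q$ conclusion, locally bounded in $L^\infty$; elliptic estimates then give $C^2_{\mathrm{loc}}$ bounds and, after passing to a subsequence, convergence to $w^*$ solving \eqref{0.8}, with $\mathfrak q$ inheriting a Dirac mass of strength $4\pi(\sum_{i=1}^d\alpha_i - 2m)$ from the residual moving-pole contribution plus the $8\pi m$ concentrated bubble mass (which accounts for the shift $\rho \mapsto \rho - 8m\pi$). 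Finally, the mass limit $\lim_{t\to 0}\int_M h^*e^{u^*_t} = \frac{\rho}{\rho-8m\pi}\int_M h^* e^{w^*}$ follows by dividing the total mass $\rho$ by the fraction $\frac{\rho-8m\pi}{\rho}$ of it that survives in $w^*$, equivalently by integrating \eqref{0.8} against $1$ and comparing normalizations.

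The main obstacle I expect is the second step: precisely identifying that the local mass at the collapsing point is exactly $8\pi m$ for an \emph{integer} $m$ (rather than some non-quantized value), because the standard Li--Shafrir quantization does not directly apply when the singular data itself is moving and merging. One must control the interaction between the $d$ collapsing Dirac poles and the concentrating bubble at the correct intermediate scale, show the rescaled limit is a finite-energy solution of the relevant singular Liouville equation on $\mathbb{R}^2$, and invoke the classification (Prajapat--Tarantello / Chen--Lin type results) to force quantization; the integrality of the $\alpha_i$ is what makes $m$ an integer and delimits its range. Getting the a priori estimate that no mass is lost ``in the neck regions'' between the bubble scale and the collapsing scale — i.e. a sharp Harnack-type bound in the intermediate annuli — is the technical heart of the argument, and is presumably where the ``sharp estimates'' of the paper's title enter.
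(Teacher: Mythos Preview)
Your proposal is essentially correct and follows the paper's route; the paper simply delegates your ``second key step'' to Theorem~1.4 of \cite{llwz}, which directly yields $\sigma_{\mathfrak{q}}=8\pi m$ with $1\le m\le[\tfrac12\sum_{i=1}^d\alpha_i]$, so you have correctly located both the skeleton of the argument and the place where the real work lies.

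Two small points to tighten. First, your justification for the upper bound on $m$ is inverted: the constraint $m\le[\tfrac12\sum_{i=1}^d\alpha_i]$ does not come from requiring the limit $w^*$ to exist with finite energy---$w^*$ is handed to you as the limit of $u^*_t$---but from the blow-up structure at $\mathfrak{q}$ itself (a Pohozaev-type identity comparing mass inside and outside the collapsing scale; this is what the cited result from \cite{llwz} encodes). Second, between establishing $\Sigma=\{\mathfrak{q}\}$ and invoking elliptic estimates for $C^2_{\mathrm{loc}}$ convergence, you must explicitly rule out \emph{full concentration}, i.e.\ $u^*_t\to-\infty$ uniformly on compact subsets of $M\setminus\{\mathfrak{q}\}$. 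The paper does this in one line: full concentration would force $\rho=\sigma_{\mathfrak{q}}=8\pi m\in8\pi\mathbb{N}$, contradicting the hypothesis. Your Step~3 uses the resulting $L^\infty_{\mathrm{loc}}$ bound without stating why it holds.
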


It is clear that the statement of Theorem \ref{th1.1} should be understood along sequences, namely with $t=t_n$ and $t_n\to0$ as $n\to+\infty.$ However there are situations where actually the statement of Theorem \ref{th1.1} holds for $t\to0.$ For example, when
\begin{align*}
&\rho\in(8\pi l,8\pi(l+1))~\mathrm{and}~\sum_{i=1}^d\alpha_i=2l,~\mbox{for some}~l\in\mathbb{N},\\
&~\quad h^*\equiv 1,~M=\mathbb{S}^2~\mathrm{and}~ \alpha_i=0,~\forall i=d+1,\cdots,N,
\end{align*}
then, by virtue of the necessary condition (see (3.25) of  \cite{tt}) for the solvability of problem \eqref{0.8} under the above assumptions, we see that (along any sequence) we must have,
\begin{align*}
m=l\quad\mathrm{and}\quad w^*=0.
\end{align*}
Secondly we observe that in the "geometrical" case,
\begin{align*}
\rho=4\pi(2+\sum_{i=1}^N\alpha_i),
\end{align*}
(where a solution of (\ref{0.3}) yields to the conformal factor for a metric on $M=\mathbb{S}^2$ with constant curvature $h^*\equiv1$ away from the conical singularities $q_i$ with angle $2\pi(\alpha_i-1),~\forall i=1,\cdots,N,$) we know that when $\alpha_i\in\mathbb{N}$ then the condition $\sum_{i=1}^N\alpha_i\in2\mathbb{N}$ is necessary and sufficient for the solvability of (\ref{0.3}). Therefore, we may use the "geometrical" probem as a guiding example in the investigation of the blow-up behaviour for solutions of \eqref{0.3} when $\rho\in 8\mathbb{N}.$\\

From Theorem \ref{th1.1}, we can describe the behaviour of $u_t^*$ away from $\mathfrak{q}.$ So we are left to understand its behaviour near $\mathfrak{q}$. For simplicity, we focus to the case where the collapsing vortices are only {\em two}, namely $d=2,$ with  $\alpha_i\in\mathbb{N}$ and $|\alpha_1-\alpha_2|\leq1$.

Let $G(x,p)$ denote the Green's function for the Laplace Beltrami operator $\Delta$ on $M$, that is
\begin{equation*}
\Delta G(x,p)+(\delta_p-1)=0,~\quad\int_MG(x,p)d\sigma(x)=0.
\end{equation*}
We denote the regular part of the Green's function by $R(x,p)$, then
$$G(x,p)=-\frac{1}{2\pi}\log|x-p|+R(x,p).$$
Set
\begin{align}
\label{def-ut-function}
u_t=u_t^*+4\pi\sum_{i=1}^{2}\alpha_iG(x,q_i(t))+4\pi\sum_{i=3}^N\alpha_iG(x,q_i),
\end{align}
and
\begin{align}
\label{def-w-function}
w=w^*-4\pi(2m-\alpha_1-\alpha_2)G(x,\mathfrak{q})+4\pi\sum_{i=3}^N\alpha_iG(x,q_i),
\end{align}
where $w^*$ is the limit of $u_t^*$ in Theorem \ref{th1.1}. We can rewrite equation (\ref{0.3}) as follows
\begin{equation}
\label{equation-ut}
\Delta u_t+\rho\left(\frac{he^{u_t-4\pi\sum_{i=1}^{2}\alpha_iG(x,q_i(t))}}
{\int_Mhe^{u_t-4\pi\sum_{i=1}^{2}\alpha_iG(x,q_i(t))}}-1\right)=0
\end{equation}
with $h=h^*\exp(-4\pi\sum_{i=3}^N\alpha_iG(x,q_i))$. Thus, $h(x)$ is a non-negative $C^2$ function with only finitely many zeroes (at $q_i$ with multiplicity $\alpha_i$ for $i=3,\cdots,N$), while $h(x)>0$ in a neighborhood of $\mathfrak{q}.$ From Theorem \ref{th1.1} we have that $u_t\to w+8m\pi G(x,\mathfrak{q})$ in $C_{\mathrm{loc}}^2(M\setminus\{\mathfrak{q}\})$ and $w$ satisfies
\begin{equation}
\label{1.5}
\Delta w+(\rho-8m\pi)\left(\frac{he^{w+4\pi(2m-\alpha_1-\alpha_2)G(x,\mathfrak{q})}}
{\int_Mhe^{w+4\pi(2m-\alpha_1-\alpha_2)G(x,\mathfrak{q})}}-1\right)=0.
\end{equation}

Next, we want to investigate the behaviour of $u_t$ in a neighborhood of $\mathfrak{q}.$ By introducing isothermal coordinates around $\mathfrak{q}$, we assume that,
\begin{equation*}
\mathfrak{q}=0,~q_1(t)=t\underline{e}~\mathrm{and}~q_2(t)=-t\underline{e}~\mathrm{for~some}~\underline{e}\in\mathbb{S}^1.
\end{equation*}
Let
\begin{equation}
\label{def-gt}
\begin{aligned}
G_t^{(2)}(x):&=4\pi\alpha_1G(x,q_1(t))+4\pi\alpha_2G(x,q_2(t))\\
&=-2\alpha_1\log|x-t\underline{e}|-2\alpha_2\log|x+t\underline{e}|+R_t,\\
R_t(x):&=4\pi\alpha_1R(x,q_1(t))+4\pi\alpha_2R(x,q_2(t)),
\end{aligned}
\end{equation}
and let $\psi(x)$ satisfy the local problem:
\begin{equation}
\label{def-psi}
\Delta\psi-\rho=0~\mathrm{in}~B_1,~\psi(0)=\nabla\psi(0)=0.
\end{equation}
Therefore we can formulate the local version of (\ref{equation-ut}) around $\mathfrak{q}$ as follows:
\begin{equation}
\label{eq115}
\Delta\bar u_t + h_1(x)|x-t\underline{e}|^{2\alpha_1}|x+t\underline{e}|^{2\alpha_2} e^{\bar u_t} = 0\ \text{in}\ B_1(0),
\end{equation}
where
\begin{equation}
\label{eq116}
\bar u_t=u_t-\log\int_Mhe^{u_t-G_t^{(2)}}-\psi,
\end{equation}
and
\begin{equation}
\label{def-h1}
h_1(x)=\rho h(x)e^{\psi-R_t},\quad h_1(x)>0~\mathrm{in}~B_1(0).
\end{equation}
Furthermore, without loss of generality, we can turn the "global" character of $u_t$ over $M$ into the following "local" information about $\bar u_t:$
\begin{equation}
\label{assume-item}
\left\{\begin{array}{l}
\bar u_t~\mbox{admits {\em only} the origin as a blow-up point in}~\bar B_1(0),\\
|\bar u_t(x)-\bar u_t(y)|\leq C,~\forall x, y\in \partial B_1(0),\\
\int_{B_1(0)} h_{1,t}(x)e^{\bar u_t}dx \leq C,
\end{array}\right.
\end{equation}
where
\begin{equation}
\label{def-h1t}
h_{1,t}(x) = h_1(x)|x-t\underline{e}|^{2\alpha_1} |x+t\underline{e}|^{2\alpha_2}.
\end{equation}
\medskip

In order to study the behaviour of $\bar u_t$ near the origin, we consider the scaled sequence
\begin{equation}
\label{1-vt}
v_t(y) =\bar u_t(ty) + 2(1+\alpha_1+\alpha_2)\text{log}\ t,\ x\in B_{1/t},
\end{equation}
which satisfies:
\begin{equation}
\label{eq117}
\begin{cases}
\Delta v_t + h_t(y) e^{v_t} = 0\ \ \text{in}\ B_{1/t},\\
\int_{B_{1/t}} h_t(y) e^{v_t} \leq C,
\end{cases}
\end{equation}
with
\begin{equation}
\label{eq118}
h_t(y) = h_1(ty)|y-\underline{e}|^{2\alpha_1} |y+\underline{e}|^{2\alpha_2}.
\end{equation}
Let $\sigma_u$ and $m_v$ denote the local mass of $\bar u_t$ in the two different scales:
\begin{subequations}
\begin{align}
&\sigma_u =\frac{1}{2\pi}\lim_{r \rightarrow 0}\lim_{t \rightarrow 0} \int_{B_{r}(0)} h_{1,t}(x) e^{\bar u_t(x)} dx, \ \quad \\
&m_v =\frac{1}{2\pi}\lim_{R \rightarrow +\infty}\lim_{t \rightarrow 0} \int_{B_{R}(0)} h_t(x) e^{v_t(x)} dx.
\end{align}
\end{subequations}
We prove:
\begin{theorem}
\label{th1.2}
Suppose that $\rho\notin 8\pi\mathbb{N},$ $h_1(x)>0~\mathrm{in}~B_1(0),~ \alpha_1 ,~ \alpha_2 \in\mathbb{N} ~with~ |\alpha_1-\alpha_2|\leq1$ and $\alpha_i\in\mathbb{N}\cup\{0\}~for ~3\leq i\leq N.$ Let $\bar u_t$ satisfy (\ref{eq115}), (\ref{assume-item}), then as $t\to0,$ $v_t$ in (\ref{1-vt}) blows up at finitely many (distinct) points:
\begin{equation*}
\{p_1, \cdots ,p_m \} \subset \mathbb{R}^2 \backslash \{ \underline{e}\ , -\underline{e} \}
\end{equation*}
with $1 \leq m \leq \min\{ \alpha_1 ,\alpha_2 \}$. Furthermore,
\begin{equation*}
\sigma_u = m_v=4m.
\end{equation*}
\end{theorem}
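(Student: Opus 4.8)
The plan is to combine the macroscopic information of Theorem~\ref{th1.1} with a blow-up analysis of the rescaled sequence $v_t$ of \eqref{1-vt}, and then to connect the collapsing scale $t$ with the macroscopic scale by a ``no-neck'' argument. First I would pin down $\sigma_u$. Writing $h=h^*\exp(-4\pi\sum_{i=3}^N\alpha_iG(\cdot,q_i))$ one has $u_t-G_t^{(2)}=u_t^*+4\pi\sum_{i=3}^N\alpha_iG(\cdot,q_i)$, hence the identity $h_{1,t}e^{\bar u_t}=\rho\,h^*e^{u_t^*}/\!\int_Mh^*e^{u_t^*}$; that is, $h_{1,t}e^{\bar u_t}$ is exactly the mass distribution of $u_t^*$ read in the isothermal coordinate near $\mathfrak q$. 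Since Theorem~\ref{th1.1} gives $u_t^*\to w^*$ in $C^2_{\mathrm{loc}}(M\setminus\{\mathfrak q\})$ with coefficient $\rho-8m\pi$ in \eqref{0.8}, the mass lost at $\mathfrak q$ is $8m\pi$, so $2\pi\sigma_u=8m\pi$, i.e. $\sigma_u=4m$. A change of variables $x=ty$ gives $\int_{B_R}h_te^{v_t}=\int_{B_{Rt}}h_{1,t}e^{\bar u_t}$, hence $m_v\le\sigma_u=4m$; and since $|\alpha_1-\alpha_2|\le1$ we have $[\frac12(\alpha_1+\alpha_2)]=\min\{\alpha_1,\alpha_2\}$, so Theorem~\ref{th1.1} already yields $1\le m\le\min\{\alpha_1,\alpha_2\}$, and in particular $m_v\le4m\le2(\alpha_1+\alpha_2)$.

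Next I would run the Brezis--Merle / Li--Shafrir / Bartolucci--Tarantello concentration--compactness alternative for $v_t$ (solving \eqref{eq117}) on balls exhausting $\mathbb{R}^2$; by the mass bound the blow-up set $\mathcal S$ is finite. If $v_t$ converged to a nontrivial solution $V$ on $\mathbb{R}^2\setminus\mathcal S$, then $V$ would solve $\Delta V+h_1(0)|y-\underline{e}|^{2\alpha_1}|y+\underline{e}|^{2\alpha_2}e^V=\sum_{p\in\mathcal S}\sigma(p)\delta_p$ with finite weighted mass; since the weight grows like $|y|^{2(\alpha_1+\alpha_2)}$ at infinity while $V(y)\sim(s-\tau)\log|y|$ there (with $\tau=\frac1{2\pi}\int_{\mathbb{R}^2}(\text{weight})e^V>0$ and $s=\frac1{2\pi}\sum_p\sigma(p)\ge0$), integrability forces $\tau>2+2(\alpha_1+\alpha_2)$, whence $m_v=\tau+s>2+4m$, contradicting $m_v\le4m$. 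Therefore $v_t\to-\infty$ in $C^2_{\mathrm{loc}}(\mathbb{R}^2\setminus\mathcal S)$ and $2\pi m_v=\sum_{p\in\mathcal S}\sigma(p)$. If some $p\in\mathcal S$ were $\underline{e}$ or $-\underline{e}$, the limiting weight would have a zero of integer order $\alpha_1$, resp. $\alpha_2$, there, so $\sigma(p)\ge8\pi(1+\alpha_1)$, resp. $8\pi(1+\alpha_2)$, by Bartolucci--Tarantello, and then $2\pi m_v\ge8\pi(1+\alpha_1)>8\pi\alpha_1\ge8\pi m\ge2\pi m_v$, a contradiction. Hence $\mathcal S\subset\mathbb{R}^2\setminus\{\underline{e},-\underline{e}\}$ consists of regular blow-up points, $\sigma(p)=8\pi N_p$ with $N_p\in\mathbb N$ (Li--Shafrir), and $m_v=4K$ with $K:=\sum_{p\in\mathcal S}N_p\le m$.

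It remains to prove $K=m$ (in particular $\mathcal S\ne\emptyset$) and $N_p\equiv1$; this is the crux and the main obstacle. I would rule out any blow-up point $x_t$ of $\bar u_t$ with $|x_t|\to0$ but $|x_t|/t\to\infty$, i.e. any mass escaping towards $|y|=\infty$ in the $v_t$-picture. If, at some scale $\lambda_t$ with $t\ll\lambda_t\ll1$, mass concentrated near the merged pole --- where the two collapsing poles $\pm t\underline{e}$ fuse into a single zero of $h_{1,t}$ of order $\alpha_1+\alpha_2$ --- the rescaled limit would solve $\Delta\widetilde V+h_1(0)|y|^{2(\alpha_1+\alpha_2)}e^{\widetilde V}=0$ on $\mathbb{R}^2$ (or blow up at that pole), hence carry mass $\ge8\pi(1+\alpha_1+\alpha_2)>8\pi m$, exceeding the total $2\pi\sigma_u=8\pi m$ --- impossible. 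Otherwise the neck mass consists of bubbles placed off the pole; taking the outermost one, with cluster mass $8\pi N^*$ ($N^*\ge1$) and with $8\pi m_{\mathrm{in}}$ the mass lying strictly inside its scale (so $m_{\mathrm{in}}=m-N^*\le m-1$), the Pohozaev/location balance at $x_t$ forces the effective logarithmic coefficient $2(\alpha_1+\alpha_2)-4m_{\mathrm{in}}$ of $\log h_{1,t}(x)+(\text{the inside background }-4m_{\mathrm{in}}\log|x|)$ to vanish (a nonzero even integer would leave an unbalanced term of size $\gtrsim|x_t|^{-1}\to\infty$), i.e. $m_{\mathrm{in}}=\frac12(\alpha_1+\alpha_2)\ge\min\{\alpha_1,\alpha_2\}\ge m>m-1\ge m_{\mathrm{in}}$ --- again impossible. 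Hence all mass sits at scale $t$, $K=m$, and $\sigma_u=m_v=4m$. Finally, the same balance at a blow-up point $p_j$ of $v_t$ (which lies in $\mathbb{R}^2\setminus\{\pm\underline{e}\}$) shows $p_j$ is a critical point of $\Phi_j(y)=\log(|y-\underline{e}|^{2\alpha_1}|y+\underline{e}|^{2\alpha_2})-4\sum_{k\ne j}N_k\log|y-p_k|$, while clustering at $p_j$ would in addition force a degeneracy of $\Phi_j$ incompatible with $\Delta\Phi_j(p_j)=0$; so $N_j=1$ for every $j$, $|\mathcal S|=m$, and the blow-up set is $\{p_1,\dots,p_m\}$ with $1\le m\le\min\{\alpha_1,\alpha_2\}$.

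Making the last step rigorous is the real work: ordinary one-scale concentration--compactness does not reach the intermediate annulus $\{Rt\le|x|\le r\}$ where $t$ and the macroscopic scale compete, and the balance argument rests on the full apparatus of Pohozaev identities and sharp pointwise blow-up estimates (in the spirit of Li, Chen--Lin and Bartolucci--Tarantello), together with the classification of entire solutions carrying a pole of integer order $\alpha_1+\alpha_2$, whose minimal mass $8\pi(1+\alpha_1+\alpha_2)$ exceeds $8\pi m$ and thereby forbids any concentration at the merged pole above scale $t$.
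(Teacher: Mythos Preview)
Your overall architecture is sound: computing $\sigma_u=4m$ directly from Theorem~\ref{th1.1} is a clean shortcut (the paper does not do this; it runs the whole argument at the level of $\bar u_t$ and only uses $\rho\notin8\pi\mathbb{N}$), and your treatment of the Brezis--Merle alternatives and of $\pm\underline{e}\notin\mathcal S$ is essentially correct. The real divergence from the paper is at the step you flag yourself as ``the crux'': showing that no mass is lost in the annulus $B_r\setminus B_{Rt}$, i.e.\ $m_v=\sigma_u$.

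The paper replaces your entire bubble--tree/no--neck analysis by a single identity. Multiplying \eqref{eq404} by $y\cdot\nabla v_t$ and integrating over $B_{r/t}\setminus B_R$ (Proposition~\ref{pr4.1}) gives
\[
\sigma_u^2-m_v^2=4(1+\alpha_1+\alpha_2)(\sigma_u-m_v),
\]
so either $\sigma_u=m_v$ or $\sigma_u+m_v=4(1+\alpha_1+\alpha_2)$. With your inputs $\sigma_u=4m$, $m_v=4K$, $K\le m\le\min\{\alpha_1,\alpha_2\}$, the second branch would give $K=1+\alpha_1+\alpha_2-m\ge 1+\max\{\alpha_1,\alpha_2\}>m$, which is impossible; hence $K=m$. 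This one line simultaneously disposes of the case $\mathcal S=\emptyset$ (there $K=0$), the ``bounded limit'' case, and any intermediate--scale concentration, without ever having to locate or balance individual bubbles between the two scales. Your balance argument for the ``outermost bubble'' is the shakiest piece: the claimed vanishing of the effective coefficient $2(\alpha_1+\alpha_2)-4m_{\mathrm{in}}$ presupposes that all other mass lies strictly inside the scale of $x_t$ and enters only through a clean $-4m_{\mathrm{in}}\log|x|$ background, which is exactly what a bubble--tree argument would have to establish and is nontrivial here because the weight itself is changing across scales.

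Two further remarks. First, you do not need a separate argument for $N_p=1$: once $\mathcal S\subset\mathbb{R}^2\setminus\{\pm\underline{e}\}$, the weight $h_1(ty)|y-\underline{e}|^{2\alpha_1}|y+\underline{e}|^{2\alpha_2}$ is $C^2$ and bounded away from zero near each $p_j$, and the bounded--oscillation property of $v_t$ on $\partial B_{r_0}(p_j)$ (which follows from \eqref{assume-item}) lets you invoke Li's pointwise estimate to get local mass exactly $8\pi$; this is what the paper does implicitly when it writes $\beta_j=1$ at regular points. Second, your ``clustering at $p_j$ forces a degeneracy of $\Phi_j$ incompatible with $\Delta\Phi_j(p_j)=0$'' is not a correct mechanism for excluding towers; $\Phi_j$ is harmonic away from the poles, so $\Delta\Phi_j(p_j)=0$ is automatic and carries no information.
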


We emphasize that in Theorem \ref{th1.1} and Theorem \ref{th1.2}, the parameter $t$ can be used to control the blow up behavior of both $u_t$ and $v_t$ as long as $\rho\not\in 8\pi\mathbb{N}.$ Next, we fix $r_0 > 0$ sufficiently small and $R > 1$ sufficiently large such that,
\begin{equation*}
B_{4r_0}(p_i) \cap B_{4r_0}(p_j) = \emptyset\ \text{for}\ i \neq j,
\end{equation*}
and
\begin{equation}
\label{eq120}
\bigcup_{i=1}^m B_{4r_0}(p_i) \subset B_{R}(0).
\end{equation}
Furthermore, we let:
\begin{subequations}
\begin{align}
&\lambda_{t,i} = \max\limits_{\overline{B}_{r_0}(p_i)} v_t = v_t(p_{t,i}),\quad \ p_{t,i} \rightarrow p_i\ \text{as}\ t \rightarrow 0\\
&i = 1, \cdots , m,\ \text{and}\ \lambda_t = \max\{ \lambda_{t,i},\ i=1,\cdots , m \}.
\end{align}
\end{subequations}
We have:
\begin{theorem}
\label{th1.3}
Under the assumptions of Theorem 1.2  we have:
\begin{enumerate}
\item[(i)] $|\lambda_t + 2(1+\alpha_1 + \alpha_2 -2m)\log t | \leq C,$
\item[(ii)] $||\exp\left(u_t-G_t^{(2)}\right)||_{L^\infty(M \backslash B_{\lambda t}(\mathfrak{q}))} \leq C,$
\end{enumerate}
with $\lambda > 0$ sufficiently large and $C$ a suitable positive constant.
\end{theorem}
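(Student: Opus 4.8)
\emph{Step 1: limiting profile and the neck estimate.} By Theorem~\ref{th1.1}, $u_t\to w+8m\pi G(\cdot,\mathfrak q)$ in $C^2_{\mathrm{loc}}(M\setminus\{\mathfrak q\})$ and $\int_M h^*e^{u^*_t}=\int_M h e^{u_t-G_t^{(2)}}\to\beta:=\tfrac{\rho}{\rho-8m\pi}\int_M h^*e^{w^*}>0$; since $w$ is smooth at $\mathfrak q$, this gives $\bar u_t\to U:=w+8m\pi G(\cdot,\mathfrak q)-\log\beta-\psi$ in $C^2_{\mathrm{loc}}(\overline{B_1}\setminus\{0\})$ with $U(x)=-4m\log|x|+A+o(1)$ near $0$, where $A$ is a constant independent of $t$. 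I would then prove that, for a fixed small $\theta>0$ and a fixed large $\omega$,
\[
\bar u_t(x)=-4m\log|x|+A+o(1)\qquad\text{uniformly for }\ \omega t\le|x|\le\theta .
\]
The two inputs are: (a) $\int_{B_r(0)}h_{1,t}e^{\bar u_t}\to 8m\pi$ for $r$ in this range, which follows from $\sigma_u=m_v=4m$ (Theorem~\ref{th1.2}) once one observes that the limiting density behaves like $|x|^{2(\alpha_1+\alpha_2)-4m}$ at the origin, which is \emph{integrable} there (indeed $\alpha_1+\alpha_2\ge 2m$ since $m\le\min\{\alpha_1,\alpha_2\}$, and $\int_{B_\epsilon}=O(\epsilon^{2(\alpha_1+\alpha_2-2m)+2})\to0$), so no residual mass is left in the neck; and (b) a Brezis--Merle/potential estimate: in dyadic sub-annuli of the neck the $L^1$-mass of $h_{1,t}e^{\bar u_t}$ is small and no concentration occurs, hence $\bar u_t$ differs from its Newtonian potential by a harmonic function of small oscillation, whose value is pinned down by the inner charge $8m\pi$ and, letting $\theta\to0$, by $U$ on $|x|=\theta$.

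\emph{Step 2: proof of (ii).} A direct computation from (\ref{eq116}), (\ref{def-h1}), (\ref{def-h1t}) gives
\[
e^{u_t-G_t^{(2)}}=\Big(\textstyle\int_M h e^{u_t-G_t^{(2)}}\Big)\,e^{\psi-R_t}\,|x-t\underline e|^{2\alpha_1}|x+t\underline e|^{2\alpha_2}\,e^{\bar u_t},
\]
and the prefactor $\big(\int_M h e^{u_t-G_t^{(2)}}\big)e^{\psi-R_t}$ is bounded on $B_1$. For $\lambda t\le|x|\le\theta$ with $\lambda\ge\omega$ (so one is outside all bubbles) one has $|x\pm t\underline e|\le 2|x|$, and Step~1 gives $e^{\bar u_t}\le C|x|^{-4m}$, hence $e^{u_t-G_t^{(2)}}\le C|x|^{2(\alpha_1+\alpha_2-2m)}\le C$, again using $\alpha_1+\alpha_2\ge 2m$. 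On $\{\theta\le|x|\le1\}$ and on $M\setminus B_1(\mathfrak q)$ the bound is immediate from the $C^2_{\mathrm{loc}}$-convergence of $u^*_t$ (note that $u_t-G_t^{(2)}=u^*_t+4\pi\sum_{i\ge3}\alpha_iG(\cdot,q_i)$ stays bounded even near the fixed singularities $q_i$). This proves (ii).

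\emph{Step 3: proof of (i).} Each $p_i\neq\pm\underline e$ is a simple $8\pi$ blow-up point of $v_t$, so by the sharp estimates of Li for isolated blow-up points \cite{l}, $v_t$ coincides on $B_{r_0}(p_i)$, up to a bounded error, with the standard bubble for the positive weight $a_i:=h_1(0)|p_i-\underline e|^{2\alpha_1}|p_i+\underline e|^{2\alpha_2}$ and concentration parameter $\delta_{t,i}=\tfrac{a_i}{8}e^{\lambda_{t,i}}$. Writing $v_t=\sum_{i=1}^m V_{t,i}+H_t$ with $V_{t,i}$ these bubbles and $H_t$ harmonic to leading order in $B_R$, and combining (i$'$) a Harnack bound making $v_t$ of bounded oscillation on $\{|y|\le R\}\setminus\bigcup_i B_{r_0}(p_i)$, (ii$'$) the bubble far fields $V_{t,i}(y)=-\lambda_{t,i}-c_i+O(1)-4\log|y-p_{t,i}|$ with $c_i:=H_t(p_i)+\sum_{k\neq i}V_{t,k}(p_i)$, and (iii$'$) the matching of $v_t$ on $\partial B_R$ with the neck estimate of Step~1 evaluated at $|x|=Rt$ (namely $v_t|_{\partial B_R}=-4m\log R+A+2(1+\alpha_1+\alpha_2-2m)\log t+o(1)$), I obtain a linear system for $(\lambda_{t,i})_i$, $(c_i)_i$ and the bulk value $\overline H_t$ of $H_t$. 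Solving it gives $\lambda_{t,i}=-2(1+\alpha_1+\alpha_2-2m)\log t+O(1)$ for every $i$ (in particular all $\lambda_{t,i}$ agree up to $O(1)$), whence $\lambda_t=\max_i\lambda_{t,i}=-2(1+\alpha_1+\alpha_2-2m)\log t+O(1)$, which is (i). For $m=1$ this reduces to the single-bubble far-field identity and no system is needed.

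\emph{Main difficulty.} The crux is Step~3: converting the decomposition $v_t=\sum_i V_{t,i}+H_t$ into rigorous pointwise bounds with \emph{uniformly bounded} error terms across all the transition regions — between adjacent bubble cores, between the bubble cluster and $\partial B_R$, and across the neck — and extracting from the bubble interactions the relations forcing the heights $\lambda_{t,i}$ to be mutually comparable. This requires Li's sharp second-order expansion near each $p_i$ together with careful Harnack and Newtonian-potential estimates; by comparison Step~1 is fairly standard, and Step~2 is essentially a corollary of it.
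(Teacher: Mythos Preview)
Your overall strategy is sound and uses the right ingredients, but the order of the argument and the identification of the main difficulty are inverted relative to the paper, and this matters.

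The paper proves (i) \emph{first}. It starts from Li's pointwise profile (your bubbles $V_{t,i}$) together with the bounded--oscillation estimate for $v_t$ in $B_{2R}\setminus\bigcup_i B_{r_0}(p_i)$; these two facts alone already yield $\lambda_{t,i}=\lambda_t+O(1)$ for every $i$ and $v_t|_{\partial B_R}=-\lambda_t+O(1)$, hence $\bar u_t|_{\partial B_{Rt}}=-\lambda_t-(2+2\alpha_1+2\alpha_2)\log t+O(1)$. No bubble decomposition or ``linear system'' is needed --- your Step~3 is considerably more elaborate than necessary. The paper then writes the Green representation for $\bar u_t$ in $B_1(0)$ at $x\in\partial B_{Rt}$ and matches the two expressions; (i) follows after a bootstrap, and (ii) is then one further application of the same Brezis--Merle argument.

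The genuine technical work --- which you label ``fairly standard'' in Step~1 --- is precisely this bootstrap. When you expand $\bar u_t(x)=-\tfrac{1}{2\pi}\int_{B_1}\log|x-z|\,h_{1,t}e^{\bar u_t}\,dz+O(1)$ for $x$ in the neck, the inner-ball contribution carries an error $o(1)|\log t|$ (the local mass is only $8m\pi+O(\lambda_t^{-1})$), and the neck integral over $I_2=B_r\setminus B_{2Rt}$ contributes another $o_r(1)|\log t|$. Neither is $O(1)$ a priori, so your uniform claim $\bar u_t(x)=-4m\log|x|+A+o(1)$ on the whole range $\omega t\le|x|\le\theta$ does not follow from the ingredients you list; the dyadic Brezis--Merle argument you sketch does not control the accumulation over $O(|\log t|)$ scales, and a potential argument without boundary information at $|x|\sim t$ is equally insufficient. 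The paper handles this in two passes: the first pass gives only the crude inequality (3.12), which is nonetheless enough to bound $u_{t,1}:=\bar u_t+(4m+1)\log|x|$ on the inner boundary $\partial B_{2Rt}$; a Brezis--Merle argument (now with controlled boundary data on \emph{both} ends of the annulus and small interior mass) then yields the weak neck bound $u_{t,1}\le C$. This forces $(\log|x-z|)\,h_{1,t}e^{\bar u_t}$ to be integrable over $I_2$ with an $O(1)$ bound, so the Green representation can be rerun to give (i) with $O(1)$ error; only then does one get the sharp neck estimate $\bar u_t+4m\log|x|\le C$ underlying (ii). In short, Step~1 is where the real difficulty lies and Step~3 is the easy matching step --- the opposite of what your ``Main difficulty'' paragraph says.
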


The blow up behaviour of $v_t$ around each blow up point has been studied in \cite{l} and \cite{cl1} via {\em a second time re-scaling} (from the original $u_t$), which is necessary to obtain an accurate behaviour of $v_t$ in a neighbourhood of each blow up point.

Next, we would like to give refined estimates than those provided in Theorem 1.3. Obviously, we need to consider (\ref{equation-ut}) globally in order to achieve this goal. From Theorem 1.1, we know that: $u_t\rightarrow w+8m\pi G(x,\mathfrak{q})$ in $C_{\mathrm{loc}}^2(M\setminus\{\mathfrak{q}\})$, where $w$ satisfies (\ref{1.5}). To get refined estimate between $\lambda_{t,i}$ and $\log t$, we need to assume that $w$ is a non-degenerate solution to (\ref{1.5}).
\medskip

\noindent {\bf Definition 1.1.} A solution $w$ of (\ref{1.5}) is said non-degenerate, if the linearized problem
\begin{align}
\label{1.6}
\Delta \phi+(\rho-8m\pi)\frac{he^{w_m}}{\int_Mhe^{w_m}}
\left(\phi-\frac{\int_Mhe^{w_m}\phi}{\int_Mhe^{w_m}}\right)=0,\quad \int_M\phi=0
\end{align}
only admits the trivial solution. Here $w_m=w+4\pi(2m-\alpha_1-\alpha_2)G(x,\mathfrak{q}).$
\medskip

Using the transversality theorem, we can always choose a non-negative smooth function $h$ such that $w$ is non-degenerate. See Theorem 4.1 in \cite{llwy}.

Based on the non-degeneracy assumption for (\ref{1.5}), we can obtain sharper estimates on the bubbling solutions of (\ref{equation-ut}). To state our result, we introduce the following notations:
\begin{align}
\label{1.7}
\rho_{t,i}=\int_{B_{r_0 t}(tp_{t,i})} h_{1,t} e^{\bar u_t} = \frac{\int_{U_{i,t}^{r_0}} \rho h^*e^{u^*_t}}{\int_M h^*e^{u^*_t}},
\end{align}
\begin{align}
\label{1.8}
C_{t,i}=\frac{1}{8}h_1(tp_{t,i})|p_{t,i}-e|^{2\alpha_1}|p_{t,i}+e|^{2\alpha_2},
\end{align}
where $U_{i,t}^{r_0}$ is the corresponding neighborhood in $M$ of the ball $B_{tr_0}(tp_{t,i})$ under the isothermal coordinate. We denote by $y_{i,t}$ the point of $M$ which corresponds to $tp_{t,i}$ in the isothermal coordinate. Throughout the paper, without causing confusion, we might identify $y_{i,t},~U_{i,t}^{r_0},~U_{i,t}^{2r_0}$ with $tp_{t,i},~B_{r_0 t}(tp_{t,i}),~B_{2r_0 t}(tp_{t,i})$ respectively. Then we have the following:

\begin{theorem}
\label{th1.4}
Let $u_t$ be the sequence of blow up solutions of (\ref{equation-ut}) and $w+8m\pi G(x,\mathfrak{q})$ be its limit in $M\setminus\{\mathfrak{q}\}.$ If $w$ is a non-degenerate solution of (\ref{1.5}), then under the assumptions of Theorem \ref{th1.2}  we have:
\begin{itemize}
  \item \begin{equation}
        \label{1.10}
        \|u_t-w-\sum_{i=1}^m\rho_{t,i}G(x,y_{i,t})\|_{C^1(M\setminus\bigcup_{i=1}^m U_{i,t}^{2r_0})}\leq Ct\log t,
        \end{equation}
  \item \begin{equation}
        \label{1.11}
        \begin{aligned}
       ~~ &\lambda_{t,i}+(2+2\alpha_1+2\alpha_2-4m)\log t-\log\left(\frac{\rho}{\rho-8m\pi}\int_Mhe^{w_m}\right)+w(tp_{t,i})\\
        &~+2\log C_{t,i}-\sum_{j\neq i}4\log|p_{t,i}-p_{t,j}|+\sum_{j=1}^m8\pi R(y_{i,t},y_{j,t})=O(t\log t),
        \end{aligned}
        \end{equation}
  \item $\rho_{t,i}-8\pi=O(t^2\log t)$ and
        \begin{align}
        \label{1.12}
        \left|\int_Mhe^{u_t-G_t^{(2)}}-\frac{\rho}{\rho-8m\pi}\int_Mhe^{w_m}\right|\leq Ct.
        \end{align}
\end{itemize}
\end{theorem}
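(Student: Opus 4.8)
The plan is to combine a Green's representation of $u_t$ with a second rescaling near each bubbling point $p_i$ and a linearization argument around the non-degenerate limit $w$ of \eqref{1.5}. Write the nonlinearity of \eqref{equation-ut} as $\xi_t:=\rho\,h e^{u_t-G_t^{(2)}}/\int_M h e^{u_t-G_t^{(2)}}$, so that $u_t(x)=\int_M G(x,y)\xi_t(y)\,dv_g(y)+c_t$ with $\int_M\xi_t=\rho$ and $c_t$ the average of $u_t$. By Theorem \ref{th1.2} and Theorem \ref{th1.3}, $\xi_t$ splits as $\sum_{i=1}^m\xi_t\mathbf 1_{U_{i,t}^{2r_0}}$, each piece of total mass $\rho_{t,i}\to 8\pi$ and concentrating at $y_{i,t}$, plus a remainder converging (in the relevant weighted sense) to the density $(\rho-8m\pi)\,h e^{w_m}/\int_M h e^{w_m}$ of \eqref{1.5}. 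Plugging this into the representation formula and using $\int_{U_{i,t}^{2r_0}}G(x,\cdot)\,\xi_t\approx\rho_{t,i}\,G(x,y_{i,t})$ for $x$ outside $U_{i,t}^{2r_0}$, one is led to the ansatz $u_t=w+\sum_{i=1}^m\rho_{t,i}G(\cdot,y_{i,t})+\eta_t$, so that \eqref{1.10} reduces to proving $\|\eta_t\|_{C^1(M\setminus\bigcup_i U_{i,t}^{2r_0})}\le Ct\log t$.

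\textbf{Inner analysis near each bubble.} Near $p_{t,i}$ set $\varepsilon_{t,i}:=e^{-\lambda_{t,i}/2}$ and perform a \emph{second} rescaling $\hat v_{t,i}(z):=v_t(p_{t,i}+\varepsilon_{t,i}z)-\lambda_{t,i}$, which by \cite{l,cl1} converges in $C^2_{\mathrm{loc}}$ to a standard bubble solving $\Delta U_0+h_t(p_i)e^{U_0}=0$ with $\int_{\R^2}h_t(p_i)e^{U_0}=8\pi$. From Theorem \ref{th1.3} one has $\varepsilon_{t,i}\sim t^{\,1+\alpha_1+\alpha_2-2m}$ with $1+\alpha_1+\alpha_2-2m\ge 1$ (since $m\le\min\{\alpha_1,\alpha_2\}$), hence $\varepsilon_{t,i}=O(t)$. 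Expanding $h_t(p_{t,i}+\varepsilon_{t,i}z)=h_t(p_{t,i})\big(1+O(\varepsilon_{t,i}|z|)+O(t|z|)\big)$ — the $O(t|z|)$ term because $h_t(y)=h_1(ty)|y-\underline e|^{2\alpha_1}|y+\underline e|^{2\alpha_2}$ and $h_1(ty)=h_1(0)+t\,\nabla h_1(0)\!\cdot\! y+O(t^2)$ — and applying a Pohozaev identity on $B_{r_0}(p_{t,i})$, together with the fact that $p_{t,i}$ is a critical point of $v_t$, yields $\rho_{t,i}-8\pi=O(t^2\log t)$ after tracking the leading $\log$-terms. The profile $U_0$ also produces precisely the constant $C_{t,i}=\tfrac18 h_1(tp_{t,i})|p_{t,i}-e|^{2\alpha_1}|p_{t,i}+e|^{2\alpha_2}$ of \eqref{1.8} and the logarithmic rate $|\lambda_{t,i}|\sim|\log t|$.

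\textbf{Outer linearized analysis and matching.} The error $\eta_t$ from the first paragraph solves, on $M\setminus\bigcup_iU_{i,t}^{2r_0}$, a perturbation of the linearized equation \eqref{1.6},
\[
\Delta\eta_t+(\rho-8m\pi)\frac{h e^{w_m}}{\int_M h e^{w_m}}\Big(\eta_t-\frac{\int_M h e^{w_m}\eta_t}{\int_M h e^{w_m}}\Big)=E_t,
\]
whose source $E_t$ collects the $t$-expansion of $G_t^{(2)}-8m\pi G(\cdot,\mathfrak q)$ and of $R_t$, the discrepancy $\rho_{t,i}-8\pi$, and the decaying far field of the bubbles, all of size $O(t\log t)$ (the $\log t$ coming from $|\log\varepsilon_{t,i}|\sim|\log t|$). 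Since $w$ is non-degenerate, the operator in \eqref{1.6} is invertible on $\mathring H^1(M)$, so elliptic estimates give $\|\eta_t\|_{C^1}\le C\|E_t\|\le Ct\log t$, which is \eqref{1.10}. Matching the inner expansion of the previous paragraph with this outer expansion of $u_t$ at the intermediate scale $|x-y_{i,t}|\sim\varepsilon_{t,i}$ — i.e. equating the constant terms in the two representations — produces the Liouville-type balancing relation \eqref{1.11} among $\lambda_{t,i}$, $\log t$, $w(tp_{t,i})$, $\log C_{t,i}$, the pairwise logarithms $\log|p_{t,i}-p_{t,j}|$ (from the mutual interaction of distinct bubbles) and the regular parts $R(y_{i,t},y_{j,t})$ (from $G=-\tfrac1{2\pi}\log|\cdot|+R$), with error $O(t\log t)$. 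Finally, integrating \eqref{equation-ut} against $1$ and inserting \eqref{1.10} together with $\rho_{t,i}-8\pi=O(t^2\log t)$ and Theorem \ref{th1.1} gives \eqref{1.12} with the sharper rate $Ct$.

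\textbf{Main obstacle.} The delicate point is closing the argument at the \emph{sharp} rate: a priori one only knows $\eta_t\to 0$, and the source $E_t$ itself depends on $\eta_t$ through $\int_M h e^{u_t-G_t^{(2)}}$ and through the bubble locations $p_{t,i}$, so the linearized estimate must be iterated while simultaneously upgrading the inner estimates — in particular showing that $p_{t,i}$ approaches a critical point of the relevant reduced function at rate $O(t\log t)$ rather than merely $o(1)$, and that the bubble–bubble and bubble–$w$ interaction terms are controlled with the same precision without accumulating extra logarithmic losses in the iteration. Non-degeneracy of $w$ (Definition 1.1), guaranteed generically by the transversality argument of \cite{llwy}, is exactly what makes this iteration a contraction and allows the errors to be pinned down at $O(t\log t)$ and $O(t^2\log t)$.
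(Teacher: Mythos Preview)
Your proposal is correct and follows essentially the same architecture as the paper's proof (Propositions \ref{pr3.2}--\ref{pr3.3}): Green's representation to define $\tilde\phi_t=u_t-w-\sum_i\rho_{t,i}G(\cdot,y_{i,t})$, an inner profile analysis near each $p_{t,i}$, and the non-degeneracy of $w$ to invert the linearized operator and close a bootstrap at rate $O(t\log t)$; you also correctly isolate the circular dependence between the inner and outer errors as the main obstacle.

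Two small points where the paper differs from your sketch. First, the sharp mass estimate $\rho_{t,i}-8\pi=O(t^2\log t)$ is obtained in the paper \emph{not} from Pohozaev but by direct integration of the bubble profile (see (\ref{s.21})--(\ref{s.28})); the Pohozaev identity (\ref{s.16}) is used instead to derive the balance law (\ref{s.20}) for the location of $p_{t,i}$, which is what kills the first-order term in the expansion (\ref{s.23}). Second, the device that makes the iteration close at the stated rate is a weighted pointwise estimate on the inner error (Lemma \ref{le5.1}/(\ref{s.15})): one subtracts from $v_t$ both the standard bubble $I_{t,i}$ \emph{and} the outer function $G_{t,i}$ and proves $|\tilde\eta_{t,i}(z)|\le C_\varepsilon\,(t\|\tilde\phi_t\|_*+t^2)(1+|z|)^\varepsilon$; this is the precise coupling between inner and outer that feeds into both $\rho_{t,i}-8\pi=O(t\|\tilde\phi_t\|_*+t^2\log t)$ and the source term of the linearized equation for $\tilde\phi_t$, after which non-degeneracy gives $\|\tilde\phi_t\|_*=O(t\log t)$ and the loop closes. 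Your ``second rescaling plus Pohozaev'' description is in the right spirit but elides this step, which is where the actual work lies. The matching for (\ref{1.11}) is carried out on $\partial B_{2r_0}(p_{t,i})$ (scale $|x-y_{i,t}|\sim r_0 t$), not at the smaller scale $\varepsilon_{t,i}$, and (\ref{1.12}) comes from the expansion (\ref{s.33}) rather than from integrating the equation against~$1$.
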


For the blow up solution $u_t$, another interesting issue is to describe the position of the blow up points of the corresponding sequence $v_t$ in (\ref{1-vt}). They are determined by the following $m$-identities:
\begin{equation}
\label{1.13}
\alpha_1\frac{p_i-\underline{e}}{|p_i-\underline{e}|^2}+\alpha_2\frac{p_i+\underline{e}}{|p_i+\underline{e}|^2}=2\sum_{j\neq i}\frac{p_i-p_j}{|p_i-p_j|^2},~1\leq i\leq m.
\end{equation}
In section 6 we shall prove that the above equations (\ref{1.13}) are uniquely solvable. In other words, the blow up points of $v_t$ are uniquely determined. This fact will be very important when showing uniqueness of the blow up solutions $u_t$.

In concluding, we want to say a few words about the purpose of the results we have obtained. Obviously, the blow up phenomenon of collapsing singularities is interesting by itself. On a more important side, it arises naturally in the computation for the degree formula of the following shadow system, see \cite[Theorem 1.4]{llwz},
\begin{align}
\label{1.14}
\left\{\begin{array}{l}
\Delta \mathrm{w}+\rho\left(\frac{h_2e^{\mathrm{w}-4\pi G(x,Q)}}{\int_Mh_2e^{\mathrm{w}-4\pi G(x,Q)}}-1\right)=0,\\
\nabla(\log h_1e^{-\frac12a\mathrm{w}})\mid_{x\in Q}=0,
\end{array}\right.
\end{align}
where $a\in\{1,2,3\}$, $h_i(x)=h_i^*(x)e^{4\pi\sum_{p\in S_i}\alpha_{p,i}G(x,p)},$ $h_i^*$ is a smooth positive functions on $M,$ $\alpha_{p,i}\in\mathbb{N}$, $S_i$ is a finite set in $M$, $i=1,2$ and $Q\notin S_1.$ To prove a priori bounds for solutions to (\ref{1.14}), it is unavoidable to face the difficulty of collapsing singularities. Indeed, for a sequence of solutions $(\mathrm{w}_k,Q_k)$ of (\ref{1.14}) with $Q_k\notin S_1\cup S_2$, it might happen that $Q_k\to Q_0\in S_2$. For details, we refer the readers to \cite{llwy,llwz}. Our analysis here aims to contribute to clarify this situation.
\medskip

This paper is organised as follows. In section 2 we prove Theorem B.2 and in section 3 we prove Theorem \ref{th1.1}. In section 4, we study the behaviour of the blow up solution $v_t$ and give the proof to Theorem \ref{th1.2} there. For the uniqueness of the blow up points of $v_t$, we provide the proof in section 6. Lastly, in Appendix A we provide some technical estimates, while in Appendix B we discuss the solvability of problem (\ref {new13}) below,  which is of independent interest.

\section{The proof of Theorem B.2}
It follows from the results in \cite{cl2}, that for every $\rho \in (8\pi,16\pi)$ and for $h_k \in C^1(\mathbb{S}^2):\int_{\mathbb{S}^2}h_k d\sigma = 1$, the problem:
\begin{equation}
\label{new1}
\Delta u_k^* + \rho(\frac{e^{u_k^*}}{\int_{\mathbb{S}^2}e^{u_k^*}}-\frac{1}{4\pi}) = 4\pi\alpha(h_k-\frac{1}{4\pi})\ \ \text{in}\ \mathbb{S}^2
\end{equation}
admits at least a solution $u_k^* \in \mathring{H}^1(S^2).$ Furthermore, without loss of generality, after a rotation we can assume that the point $p \in \mathbb{S}^2$ in Theorem B.2 coincided with the south pole of $\mathbb{S}^2$ located at $(0,0,-1)$. Hence, the north pole of $\mathbb{S}^2$ is located at $(0,0,1)$ and from it we consider the stereographic projection, $\pai: \mathbb{S}^2 \rightarrow \mathbb{R}^2$. By letting:
\begin{equation}
\label{new2}
v_k(x)=u^*_k(y)-\text{log}\int_{\mathbb{S}^2}e^{u^*_k} d\sigma+\text{log}(\frac{4\rho}{(1+|x|^2)^{a+2}})
\end{equation}
with $x =\pai(y)$, $y\in\mathbb{S}^2$, we see that $v_k$ satisfies:
\begin{equation}
\label{new3}
\begin{cases}
\Delta v_k + (1 + |x|^2)^{a} e^{v_k} = 4\pi\alpha g_k(x)\ \ \text{in}\ \mathbb{R}^2,\\
\int_{\mathbb{R}^2}(1+|x|^2)^{a}e^{v_k} dx = \rho,
\end{cases}
\end{equation}
with
\begin{equation}
\label{new4}
a=\frac{\rho}{4\pi}-(\alpha+2)
\end{equation}
and
\begin{equation}
\label{new5}
g_k(x)=\frac{4h_k(\pai^{-1}(x))}{(1+|x|^2)^2}.
\end{equation}
In particular, we have $\int_{\mathbb{R}^2}g_k(x)dx = 1$.

We take $g_k$ as the standard regularisation of $\delta_0$ namely,
\begin{equation}
\label{new6}
g_k(x)=\frac{\lambda_k}{\pi(1+\lambda_k|x|^2)^2},\ \ \lambda_k \rightarrow +\infty\ \text{as}\ k\rightarrow+\infty.
\end{equation}
So,
\begin{equation}
\label{new7}
g_k \rightharpoonup \delta_0\ \  \text{weakly in the sense of measure in}\ \mathbb{R}^2;
\end{equation}
and in turn its pull-back $h_k$ over $\mathbb{S}^2$ (see (\ref{new5})) also satisfies:
\begin{equation}
\label{new8}
h_k \rightarrow \delta_p\ \ \text{weakly in the sense of measure on}\ \mathbb{S}^2.
\end{equation}

To proceed further, we recall some useful facts. First of all, it was shown in \cite{tt} by means of a Pohozaev type identity that the problem:
\begin{equation}
\label{new9}
\begin{cases}
\Delta v + (1 + |x|^2)^{a} e^{v} = 4\pi\alpha \delta_0\ \ \text{in}\ \mathbb{R}^2,\\
\int_{\mathbb{R}^2} (1 + |x|^2)^{a} e^{v} dx = \rho,
\end{cases}
\end{equation}
with
\begin{equation}\label{new10}
0 \neq \alpha > -1\ \ \text{and $a$ satisfying}\ \ (\ref{new4})
\end{equation}
is solvable only if
\begin{equation}\label{new11}
\rho \in (0,8\pi(1-\alpha^-)) \cup (8\pi(1+\alpha^+),+\infty),
\end{equation}
where, as usual, $\alpha^{\pm}=\ \text{max}\{ 0,\pm\alpha \}$.

By means of the transformation:
\begin{equation*}
V(x) =v(\frac{x}{|x|^2})+(\frac{\rho}{2\pi}-2\alpha)\log\frac{1}{|x|},
\end{equation*}
we check easily that $V$ extends smoothly at the origin and it satisfies:
\begin{equation}\label{new12}
\begin{cases}
\Delta V + (1 + |x|^2)^{a} e^{V} = 0,\\
\int_{\mathbb{R}^2} (1 + |x|^2)^{a} e^{V} dx = \rho,
\end{cases}
\end{equation}
with $a=\frac{\rho}{4\pi}-(\alpha+2)$ and $0 \neq \alpha > -1$. So that (\ref{new11}) is also a necessary condition for the solvability of (\ref{new12}).

In addition to carry out our blow-up analysis and in order to motivate the assumption (\ref{0.33}), we consider also the problem:
\begin{equation}
\label{new13}
\begin{cases}
\Delta V + (1 + |x|^2)^{\alpha} e^{V} = 0\ \ \text{in}\ \mathbb{R}^2,\\
\int_{\mathbb{R}^2} (1 + |x|^2)^{\alpha} e^{V} dx = \rho.
\end{cases}
\end{equation}
Notice that now there is no relation (of the type (\ref{new4})) which links the power $\alpha$ to $\rho$. Nonetheless we show that when $\rho$ satisfies (\ref{0.33}) then problem (\ref{new13}) admits \textit{no} solutions.

More precisely, for problem (\ref{new13}) the following holds:
\begin{proposition}
\label{pr2.new}
Let $\alpha>-1.$
\begin{itemize}
  \item[(i)] If $\rho\in(0,16\pi)$ then a solution of (\ref{new13}) is necessarily radially symmetric.
  \item[(ii)] If $-1<\alpha\leq1$ then problem (\ref{new13}) is solvable if and only if $$\rho\in(8\pi\min\{1,\alpha+1\},8\pi\max\{1,\alpha+1\}).$$
  In addition, the corresponding solution is unique, radially symmetric and non-degenerate.
  \item[(iii)] If $\alpha>1$ then there exists $\bar\rho_\alpha\in(4\pi(\alpha+1),8\pi\alpha)$ such that problem (\ref{new13}) admits a radially symmetric solution if and only if $\rho\in[\bar\rho_\alpha,8\pi(\alpha+1)).$ Furthermore, if $\rho\in[8\pi\alpha,8\pi(\alpha+1))$ then the corresponding radial solution is unique and non-degenerate. While for $\rho\in(\bar\rho_{\alpha},8\pi\alpha)$ problem (\ref{new13}) admits at least two radial solutions.
\end{itemize}
\end{proposition}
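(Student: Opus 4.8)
The plan is to establish the three parts by largely independent arguments: the method of moving planes for the symmetry statement (i), a Pohozaev identity for the necessary conditions, and a shooting analysis of the associated radial ODE for existence, uniqueness and multiplicity. For (i), fix $\rho\in(0,16\pi)$. Any solution $V$ of (\ref{new13}) has finite total mass, so by the Brezis--Merle and Chen--Li theory it satisfies $V(x)=-\frac{\rho}{2\pi}\log|x|+O(1)$ and $\nabla V(x)=-\frac{\rho}{2\pi}\frac{x}{|x|^{2}}+o(|x|^{-1})$ as $|x|\to\infty$; moreover integrability of $(1+|x|^{2})^{\alpha}e^{V}$ already forces $\rho>4\pi(\alpha+1)$. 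A Kelvin transform centred at infinity carries the singularity at infinity to the origin while keeping the weight radial, and running Alexandrov reflection in every direction then yields that $V$ is radially symmetric about the origin, the bound $\rho<16\pi$ being exactly what closes the reflection comparison (it excludes the non-radial solutions that exist for larger mass, in the spirit of Prajapat--Tarantello). Alternatively one may transplant (\ref{new13}) to the mean field equation on $\mathbb{S}^{2}$ with a single conical source and invoke the sphere-covering inequality.

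For the necessary conditions, multiply (\ref{new13}) by $x\cdot\nabla V$, integrate over $B_{R}$ and let $R\to\infty$: the boundary terms, computed from the expansion above, contribute $\rho^{2}/4\pi$, and the weighted boundary term vanishes since $\rho>4\pi(\alpha+1)$, so
\[
\frac{\rho^{2}}{8\pi}=(\alpha+1)\rho-\alpha J,\qquad J:=\int_{\mathbb{R}^{2}}(1+|x|^{2})^{\alpha-1}e^{V}\,dx .
\]
Since $0<J<\rho$ strictly (because $(1+|x|^{2})^{-1}<1$ away from the origin while $e^{V}>0$ everywhere), this forces $\rho>4\pi(\alpha+1)$ always, and in addition $8\pi<\rho<8\pi(\alpha+1)$ when $\alpha>0$, $8\pi(\alpha+1)<\rho<8\pi$ when $-1<\alpha<0$, and $\rho=8\pi$ when $\alpha=0$. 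The endpoints are genuinely excluded: $\rho=8\pi(\alpha+1)$ would give $J=0$, and $\rho=8\pi$ would give $J=\rho$, both impossible. This proves the ``only if'' direction of (ii) — with no use of symmetry, hence it also yields non-existence outside the stated ranges, including whenever $\rho\ge16\pi$ — and the upper endpoint $8\pi(\alpha+1)$ in (iii).

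For existence and counting, use (i) (and, for (iii), restrict to radial solutions from the outset) to write $V=V(r)$; normalise $V(0)=0$ and carry a shooting parameter $\lambda>0$, so that $V''+r^{-1}V'+\lambda(1+r^{2})^{\alpha}e^{V}=0$, $V'(0)=0$. One checks $V$ is strictly decreasing with $V\to-\infty$, the mass $M(r)=-2\pi rV'(r)$ increases to a finite limit $\rho(\lambda)>4\pi(\alpha+1)$, and $\lambda\mapsto\rho(\lambda)$ is continuous on $(0,\infty)$. The two boundary values are: $\rho(\lambda)\to8\pi(\alpha+1)$ as $\lambda\to0^{+}$ (the bubble escapes to infinity and the rescaled solution converges to a Chanillo--Kiessling/Prajapat--Tarantello profile of $\Delta V+|x|^{2\alpha}e^{V}=0$); and as $\lambda\to+\infty$ the solution concentrates at the origin, where the weight is $\approx1$, so the leading bubble carries mass $8\pi$ — this is the limit when $-1<\alpha\le1$, whereas for $\alpha>1$ a secondary concentration at the conical point (in the $\mathbb{S}^{2}$ picture this is admissible precisely because the residual cone order $\gamma=\frac{\rho}{4\pi}-(\alpha+2)$ there must stay $>-1$, which needs $\alpha>1$) adds a further $8\pi(\alpha-1)$, making the limit $8\pi\alpha$. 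By the intermediate value theorem, and since $\rho(\lambda)$ is confined to the necessary interval from the Pohozaev step, the image of $\rho(\cdot)$ is exactly the interval spanned by its two limits. Finally, differentiating the Pohozaev identity along the family gives
\[
\rho'(\lambda)=\frac{\alpha\,J'(\lambda)}{(\alpha+1)-\rho(\lambda)/4\pi},
\]
whose denominator is always negative, so $\mathrm{sign}\,\rho'(\lambda)=-\mathrm{sign}(\alpha)\,\mathrm{sign}\,J'(\lambda)$; analysing the linearised radial ODE for $\partial_{\lambda}V$ one shows $J'$ keeps a fixed sign when $-1<\alpha\le1$, so $\rho$ is strictly monotone and each admissible $\rho$ has a unique radial solution, non-degenerate (the radial part of the linearised kernel is trivial exactly because $\rho'(\lambda)\ne0$, and the angular modes $k\ge1$ are disposed of by a Sturm comparison); whereas for $\alpha>1$ the sign of $J'$ changes exactly once, producing a single non-degenerate interior minimum $\bar\rho_{\alpha}=\rho(\lambda^{*})\in(4\pi(\alpha+1),8\pi\alpha)$, hence two radial solutions for $\rho\in(\bar\rho_{\alpha},8\pi\alpha)$ and a unique, non-degenerate one for $\rho\in[8\pi\alpha,8\pi(\alpha+1))$.

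The main obstacle is this last point: controlling globally along the solution curve the sign of $J'(\lambda)$ — equivalently of $\rho'(\lambda)$ — namely that it is one-signed for $-1<\alpha\le1$ and changes sign exactly once for $\alpha>1$. This is a statement about the one-dimensional linearised operator $\Delta+\lambda(1+r^{2})^{\alpha}e^{V}$, and it is the only place where the precise threshold $\alpha=1$ (rather than $\alpha=0$) intervenes. I would approach it by a Sturm oscillation / phase-plane analysis after the Emden--Fowler change of variables $t=\log r$, $w=V+2t$, which yields $w''+\lambda(1+e^{2t})^{\alpha}e^{w}=0$ on $\mathbb{R}$ (the nonlinearity integrable and decaying at both ends, $w'$ tending to constants), together with the first integral $\tfrac12(w')^{2}+\lambda(1+e^{2t})^{\alpha}e^{w}=2+\lambda\int_{-\infty}^{t}\partial_{s}\!\big[(1+e^{2s})^{\alpha}\big]e^{w}\,ds$, whose right-hand side is monotone in $t$ with the sign of $\alpha$. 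Proving that the minimum is unique for \emph{every} $\alpha>1$ — i.e. excluding secondary folds — is the most delicate step, and may require a separate convexity property of $\lambda\mapsto\rho(\lambda)$ on the region where $\rho(\lambda)<8\pi\alpha$.
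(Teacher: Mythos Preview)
Your overall architecture is right, and parts (i), (ii), the Pohozaev necessary conditions, and the shooting limits are essentially what the paper does (it cites \cite{gm} for (i) and \cite{l0} for (ii)). The real issue is part (iii), and you have correctly located the difficulty: proving that the curve $\lambda\mapsto\rho(\lambda)$ dips strictly below its limit value $8\pi\alpha$. Two remarks.

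First, you are proving more than the statement asks. Part (iii) claims only \emph{at least} two radial solutions for $\rho\in(\bar\rho_\alpha,8\pi\alpha)$, not exactly two; so you do not need uniqueness of the fold (i.e.\ that $J'$ changes sign \emph{exactly once}). What you must show is merely that $\min_\lambda \rho(\lambda)<8\pi\alpha$. This does not follow from the limits alone, since the two endpoint limits $8\pi(\alpha+1)$ and $8\pi\alpha$ are different and a priori $\rho(\cdot)$ could be monotone.

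Second, and more substantively, the paper does \emph{not} attack this via the sign of $J'(\lambda)$ or an Emden--Fowler/Sturm argument. Its route is a continuation in $\alpha$. For $\alpha\geq 2$ the explicit solution $v(r)=\log\bigl(4(\alpha+2)/(1+r^2)^{\alpha+2}\bigr)$ gives $\beta_\alpha(a_\alpha)=2(\alpha+2)\leq 4\alpha$, which combined with the monotonicity of $\beta_\alpha$ on $\beta_\alpha^{-1}([4\alpha,4(\alpha+1)))$ (proved in \cite{l0} via Alexandroff--Bol) already yields the minimum below $4\alpha$. For $1<\alpha<2$ one shows the set $\Lambda=\{\alpha\in(1,2):\min_a\beta_\alpha(a)<4\alpha\}$ is open, nonempty (by continuity from $\alpha=2$), and closed. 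Closedness is the heart of the matter: if $\alpha_n\to\alpha\in(1,2)$ and $a_n=a_{\alpha_n}^*$ is the minimizer, one must rule out $a_n\to+\infty$. The paper argues by contradiction via a simultaneous blow-up of $v_n$ and of its Kelvin transform $\hat v_n$ at the origin, together with the bounded linearized solution $\varphi_n=\partial_a v_{\alpha_n}(\cdot,a_n)$ (bounded precisely because $\beta'_{\alpha_n}(a_n)=0$). After rescaling by the two bubble scales $\varepsilon_n$ and $\hat\varepsilon_n$ (with $\hat\varepsilon_n/\varepsilon_n\to 0$), the normalized $\varphi_n$ converges to explicit kernel elements of the limiting bubbles; tracking the location of the first and last zeros of $\varphi_n$ (controlled by the Alexandroff--Bol inequality (\ref{n8.14})) forces these limits to be simultaneously $0$ and $\pm 1$ at the origin, a contradiction. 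Your Emden--Fowler strategy may ultimately work, but as written it is a proposal rather than a proof, and the paper's indirect $\alpha$-continuation plus blow-up of the linearized equation is what actually closes the argument.
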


We postpone the proof of Proposition \ref{pr2.new} in Appendix B.
\medskip

\noindent {\bf Remark 2.1:} We note that, if $1<\alpha<3$  and we let,
\begin{equation}
\label{2.new.0}
\rho_{\alpha}^*=\min\{\bar\rho_\alpha,16\pi\}
\end{equation}
then for $\rho\in(4\pi(\alpha+1),\rho_{\alpha}^*)$ problem (\ref{new13}) admits no solution.  We suspect that such non-existence property actually holds for problem (\ref{new13})  whenever $\rho\in(4\pi(\alpha+1),\bar\rho_{\alpha})$ and $\alpha>1.$
\medskip

At this point we pass to analyse the sequence $v_k$ satisfying (\ref{new3}) and (\ref{new6}). We know that $v_k$ must blow up, in view of (\ref{new7}) and (\ref{new11}). To describe its asymptotic behaviour as $k \rightarrow +\infty$, we consider the new sequence:
\begin{equation*}
\xi_k(x)=v_k(x)-\alpha\ \text{log}(\frac{1}{\lambda_k}+|x|^2).
\end{equation*}
We can easily check that,
\begin{equation}
\label{new14}
\begin{cases}
\Delta \xi_k+(\frac{1}{\lambda_k} + |x|^2)^{\alpha}(1+|x|^2)^{a} e^{\xi_k}=0,\ \ \text{in}\ \mathbb{R}^2,\\
\int_{\mathbb{R}^2}(\frac{1}{\lambda_k}+|x|^2)^{\alpha}(1+|x|^2)^{a} e^{\xi_k} = \rho.
\end{cases}
\end{equation}
(Recall that $g_k$ is given by (\ref{new6}) with $a$ satisfying (\ref{new4}).)

Notice that the blow-up analysis of Brezis-Merle \cite{bm} and Li-Shafrir \cite{ls} does not apply to $\xi_k$ near the origin, and in fact the following holds:
\medskip

\begin{theorem}
\label{newth2.1}
Assume (\ref{new4}) and (\ref{0.33}) and let $\xi_k$ satisfy (\ref{new14}) then (along a subsequence) the following holds:
\begin{equation*}
\begin{aligned}
\xi_k(x_k)=\max_{\mathbb{R}^2}\xi_k\to\infty~\mathrm{and}~x_k\to0,~\mathrm{as}~k\to+\infty,
~\xi_k\to\xi~\mathrm{in}~C_{\mathrm{loc}}^2(\mathbb{R}^2\setminus\{0\}),
\end{aligned}
\end{equation*}
with $\xi$ satisfying:
\begin{equation*}
\begin{cases}
\Delta \xi + |x|^{2\alpha}(1+|x|^2)^{a}e^{\xi}+ 8\pi\delta_0 = 0,\\
\int_{\mathbb{R}^2}|x|^{2\alpha}(1+|x|^2)^{a}e^{\xi} = \rho-8\pi.
\end{cases}
\end{equation*}
\end{theorem}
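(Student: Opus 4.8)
The plan is to carry out a blow-up analysis of the sequence $\xi_k$ solving (\ref{new14}), combining the Brezis--Merle \cite{bm} and Li--Shafrir \cite{ls} theory --- available wherever the coefficient stays bounded away from zero, i.e.\ away from the origin --- with a finer multi-scale analysis near the origin that rests on the non-solvability result of Proposition \ref{pr2.new} and Remark 2.1. First I would show that $\xi_k$ must blow up: if $\sup_{B_R}\xi_k\le C_R$ for every $R>0$, then $v_k=\xi_k+\alpha\log(\lambda_k^{-1}+|x|^2)$ stays locally bounded above, so from the mass bound $\int(1+|x|^2)^a e^{v_k}=\rho$ one gets, along a subsequence, $v_k\to v_\infty$ in $C^2_{\mathrm{loc}}$; since $g_k\rightharpoonup\delta_0$ (see (\ref{new6})--(\ref{new7})) and no mass is lost (the underlying surface $\mathbb S^2$ being compact), $v_\infty$ solves (\ref{new9}) with the prescribed $\rho$. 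This contradicts the necessary condition (\ref{new11}), because $\alpha>1$ and $\rho\in(4\pi(\alpha+1),\rho_\alpha^*)$ force $\rho\in(8\pi,16\pi)\subset(8\pi,8\pi(1+\alpha))$. Hence $\max_{\mathbb{R}^2}\xi_k\to+\infty$.

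Next, let $\mathcal S$ be the blow-up set of $\xi_k$ and $\mathcal S':=\mathcal S\setminus\{0\}$. Since $(\lambda_k^{-1}+|x|^2)^{\alpha}(1+|x|^2)^{a}\to|x|^{2\alpha}(1+|x|^2)^a$ locally uniformly on $\mathbb{R}^2\setminus\{0\}$, a smooth strictly positive limit, the Brezis--Merle trichotomy holds there: either (a) $\xi_k\to\xi$ in $C^2_{\mathrm{loc}}(\mathbb{R}^2\setminus\{0\})$; (b) $\xi_k\to-\infty$ there; or (c) $\mathcal S'\ne\emptyset$, $\xi_k\to-\infty$ in $C^2_{\mathrm{loc}}(\mathbb{R}^2\setminus(\mathcal S'\cup\{0\}))$ with local mass $8\pi m_q$ ($m_q\in\mathbb N$, by Li--Shafrir) at each $q\in\mathcal S'$. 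One first checks $0\in\mathcal S$: if $0\notin\mathcal S$, then in (c) $\rho=\sum_q 8\pi m_q\in 8\pi\mathbb N$, excluded, while in (a) or (b) with $0\notin\mathcal S$ either $\mathcal S=\emptyset$, contradicting blow-up, or the mass integral of (\ref{new14}) tends to $0\ne\rho$. Granting from Step 3 below that the origin, being a blow-up point, carries mass $8\pi$ or at least $16\pi$, alternative (b) forces $\rho$ to equal that mass --- impossible since $\rho\in(8\pi,16\pi)$ --- and (c) forces $\rho$ to be at least that mass plus $8\pi$, hence $\rho\ge16\pi$, again impossible. Therefore (a) holds with $\mathcal S=\{0\}$, and since $\rho<16\pi$ the concentrated mass at the origin is exactly $8\pi$; everything reduces to justifying the "$8\pi$ or $\ge16\pi$" alternative.

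This is the core of the argument and the main obstacle. I would rescale $\xi_k$ at the concentration scale $\varepsilon_k:=\lambda_k^{-1/2}$: the function $\eta_k(y):=\xi_k(\varepsilon_k y)+2(1+\alpha)\log\varepsilon_k$ solves
\[ \Delta\eta_k+(1+\varepsilon_k^{2}|y|^{2})^{a}(1+|y|^{2})^{\alpha}e^{\eta_k}=0\ \ \text{in}\ \mathbb{R}^2,\qquad \int_{\mathbb{R}^2}(1+\varepsilon_k^{2}|y|^{2})^{a}(1+|y|^{2})^{\alpha}e^{\eta_k}=\rho, \]
whose formal limit as $\varepsilon_k\to0$ is precisely problem (\ref{new13}). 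Since (\ref{new13}) has \emph{no} solution for the range of $\rho$ in question (Remark 2.1), $\eta_k$ cannot converge to a solution of mass $\rho$; analysing the remaining possibilities and rescaling once more at the intrinsic blow-up scale $\mu_k$ of $\xi_k$ near the origin, one excludes $\mu_k\sim\varepsilon_k$ (which would produce a solution of (\ref{new13}) of mass $\le\rho<\rho_\alpha^*\le16\pi$, impossible by Proposition \ref{pr2.new}) and $\mu_k\gg\varepsilon_k$ (which would produce a singular Liouville profile of mass $8\pi(1+\alpha)>16\pi>\rho$, by the classification of solutions of $\Delta U+|x|^{2\alpha}e^{U}=0$). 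Hence $\mu_k/\varepsilon_k\to0$: on that scale the coefficient is asymptotically constant, so the rescaled sequence converges to a standard Liouville bubble of mass $8\pi$; a Pohozaev identity on the annuli $\{\mu_k\le|x|\le r\}$, $r$ small, then shows the neck carries no mass or at least $8\pi$, giving the required "$8\pi$ or $\ge16\pi$" alternative and closing the loop with Step 2.

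Finally, passing to the limit in (\ref{new14}) --- the coefficient converging to $|x|^{2\alpha}(1+|x|^2)^a$ on $\mathbb{R}^2\setminus\{0\}$ and the $8\pi$ of concentrated mass contributing the atom $8\pi\delta_0$ --- yields $\Delta\xi+|x|^{2\alpha}(1+|x|^2)^a e^{\xi}+8\pi\delta_0=0$ and $\int_{\mathbb{R}^2}|x|^{2\alpha}(1+|x|^2)^a e^{\xi}=\rho-8\pi$, while $x_k\to0$ for the maximum points follows since $\xi$ carries the singularity $-4\log|x|$ at the origin (integrable against the weight exactly because $\alpha>1$). The genuinely delicate points are the bookkeeping of mass across the two scales $\varepsilon_k$ and $\mu_k$ near the origin, the exclusion of the intermediate scenarios there, and ruling out escape of mass to spatial infinity in the rescaled problems --- the last being controlled by the decay of the conformal weight $(1+|x|^2)^a$ inherited from the stereographic projection.
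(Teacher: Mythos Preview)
Your overall architecture matches the paper's: both proofs show blow-up must occur, localize it at the origin, rescale at the scale $\varepsilon_k=\lambda_k^{-1/2}$ (your $\eta_k$ is the paper's $\phi_k$), and invoke the non-solvability of (\ref{new13}) from Proposition~\ref{pr2.new}/Remark~2.1 to rule out convergence at that scale. The difference lies in how the two arguments control mass at infinity and in the neck. The paper never speaks of an intrinsic scale $\mu_k$; instead it systematically uses Kelvin inversions. Already in Claim~1 it passes to $\hat\xi_k(x)=\xi_k(x/|x|^2)-\tfrac{\rho}{2\pi}\log|x|$ to encode the behaviour of $\xi_k$ at infinity as a local problem near the origin; after Claim~2 it does the same with $\hat\phi_k$ to capture the far field of $\phi_k$. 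Your appeal to ``compactness of $\mathbb S^2$'' and ``decay of the conformal weight'' plays the same role more implicitly.

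The one place where your sketch is genuinely thinner than the paper is the neck estimate in Step~3. You assert that a Pohozaev identity on $\{\mu_k\le|x|\le r\}$ forces the neck mass to be either $0$ or $\ge 8\pi$, and then use $\rho<16\pi$ to pick the first alternative. The paper does \emph{not} obtain any such quantization; the Pohozaev computation at the origin only gives $m_0\ge 8\pi$ (see (\ref{new214b})), and a separate Pohozaev for $\hat\phi_k$ gives $\beta_\infty\ge 8\pi(a+1)$ with $-1<a<0$, so neither inequality alone pins down $m_0$. To close the gap the paper performs a \emph{third} rescaling: it takes $s_k=e^{-\hat\phi_k(\hat x_k)/2(a+1)}$ at the maximum of $\hat\phi_k$, proves the delicate Claim~3 that $s_k\sqrt{\lambda_k}\to L\in(0,\infty)$, obtains a limiting profile $\eta$ solving (\ref{new37}), and then matches the two Pohozaev identities (\ref{new26}) and (\ref{new38}) to deduce $\beta_\infty=\rho_\infty$ \emph{exactly}. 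Only then can one conclude that the annular mass between the scales vanishes and hence $m_0=8\pi$. Your ``$0$ or $\ge 8\pi$'' dichotomy for the neck would need an argument of comparable depth; as written, it is the step most in need of justification.
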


\begin{proof}
We start by observing that the function:
\begin{equation*}
\hat{\xi}_{k}(x) = \xi_k (\frac{x}{|x|^2}) - \frac{\rho}{2\pi}\text{log}|x|
\end{equation*}
extends smoothly at the origin and satisfies:
\begin{equation*}
\begin{cases}
-\Delta \hat{\xi}_{k} = (\frac{|x|^2}{\lambda_k} + 1)^{\alpha}(1+|x|^2)^{a} e^{\hat{\xi}_{k}}\ \ \text{in}\ \mathbb{R}^2,\\
\int_{\mathbb{R}^2}(\frac{|x|^2}{\lambda_k}+1)^{\alpha}(1+|x|^2)^{a} e^{\hat{\xi}_{k}} = \rho,
\end{cases}
\end{equation*}
and the well known blow-up analysis of \cite{bm,ls} applies to $\widehat{\xi_k}$.

\begin{flushleft}
{\bf Claim 1:} The origin is a blow-up point for $\xi_k$.
\end{flushleft}

Indeed, if by contraction this was not the case, then $\hat{\xi}_{k}$ would either be uniformly bounded (locally in $\mathbb{R}^2$) or it would blow-up in $\mathbb{R}^2$. Notice that the alternative: $\max_{B_R}\hat{\xi}_{k} \rightarrow -\infty$ as $k \rightarrow +\infty$, $\forall\ R > 0$ cannot occur. Indeed, since (by contradiction) the origin is not a blow up point for $\xi_k$, then it would imply (by a standard Harnack-type inequality) that also $\max_{B_R} \xi_k \to -\infty$ as $k \rightarrow +\infty$, $\forall\ R > 0$, and consequently we  would get  $\rho=0$, which is impossible. Next we rule out the possibility that $\hat{\xi}_{k}$ is (locally) uniformly bounded. In fact it would imply that (along a subsequence), $\hat{\xi}_{k}\rightarrow v$ with $v$ satisfying (\ref{new12}). This is impossible since our assumption on $\rho$ violates the necessary condition (\ref{new11}). Finally if $\hat{\xi}_{k}$ blows-up, then by applying \cite{bm,ls} and by using the contradiction assumption (i.e. the origin is not a blow up point for $\xi_k$), we would derive that necessarily $\rho=8\pi,$ in contradiction to our assumptions on $\rho$ in (\ref{0.33}).  In all cases we have obtained a contradiction and Claim 1 is established.

We let
\begin{equation}
\label{new214a}
m_0 = \lim_{\varepsilon \rightarrow 0} \lim\limits_{k \rightarrow +\infty} \int_{|x| < \varepsilon} (\frac{1}{\lambda_k}+|x|^2)^{\alpha}(1+|x|^2)^a e^{\xi_k}
\end{equation}
be the local blow-up mass of $\xi_k$ at the origin. By means of a Pohozaev type identity applied in the usual way (see \cite{tt}) we find that,
\begin{equation*}
\frac{m_0}{4\pi}(m_0-8\pi)=2\alpha\lim_{r\to0}\lim_{k\to+\infty} \int_{|x| \leq r}(\frac{1}{\lambda_k}+|x|^2)^{\alpha-1}|x|^2(1+|x|^2)^a e^{\xi_k} \geq0,
\end{equation*}
and so,
\begin{equation}
\label{new214b}
m_0 \geq 8\pi.
\end{equation}
By our assumption on $\rho$ and (\ref{new214b}), we see that necessarily $\hat{\xi}_{k}$ cannot blow-up. Therefore we conclude that the origin is the only blow-up point for $\xi_k$, and $\forall\ \varepsilon >0$ there exists a constant $C_{\varepsilon} >0$
\begin{equation}
\label{new15}
\xi_k(x)+\frac{\rho}{2\pi}\text{log}|x| \leq C_{\varepsilon},\ \ \forall\ |x| \geq \varepsilon.
\end{equation}
We show that the estimate (\ref{new15}) extends away of a tiny neighborhood of the origin with size $\frac{1}{\lambda_k}$. To this purpose we introduce the scaled sequence:
\begin{equation*}
\phi_k(x) = \xi_k(\frac{x}{\sqrt{\lambda_k}}) + (\alpha+1)\text{log}\frac{1}{\lambda_k}
\end{equation*}
which satisfies:
\begin{equation}
\label{new16}
\begin{cases}
\Delta\phi_k+(1+|x|^2)^{\alpha}(1+\frac{|x|^2}{\lambda_k})^{a}e^{\phi_k} = 0\ \ \text{in}\ \mathbb{R}^2,\\
\int_{\mathbb{R}^2}(1+|x|^2)^{\alpha}(1+\frac{|x|^2}{\lambda_k})^{a}e^{\phi_k} = \rho.
\end{cases}
\end{equation}

\begin{flushleft}
{\bf Claim 2:} $\phi_k$ must blow up.
\end{flushleft}

To establish the above claim, we observe again that the standard blow-up analysis of \cite{bm,ls} applies to $\phi_k$. Therefore if by contradiction we assume that $\phi_k$ does not blow-up, then either $\phi_k$ is locally uniformly bounded or,
\begin{equation}
\label{new18}
\begin{aligned}
\sup_{B_R} \phi_k \rightarrow - \infty\ \ \text{as}\ k \rightarrow +\infty,\ \ \forall\ R>0.\\
\end{aligned}
\end{equation}
We readily rule out the possibility that $\phi_k$ is uniformly bounded. Indeed, if this was the case, then along a subsequence we would find that $\phi_k \to v$ in $C^2_{\mathrm{loc}}(\mathbb{R}^2)$, with $v$ satisfying (\ref{new13}). But we know that this is not possible, since under the assumption (\ref{0.33}) problem (\ref{new13}) admits no solutions, see Remark 2.1.

In order to see that also (\ref{new18}) is not allowed, we consider,
\begin{equation}
\label{new19}
\hat{\phi}_k(x) =\phi_k(\frac{x}{|x|^2})-\frac{\rho}{2\pi}\text{log}|x|
\end{equation}
satisfying:
\begin{equation}
\label{new20}
\begin{cases}
\Delta \hat{\phi}_{k} + (1+|x|^2)^{\alpha}(\frac{1}{\lambda_k}+|x|^2)^{a}e^{\hat{\phi}_{k}} = 0,\\
\int_{\mathbb{R}^2}(1+|x|^2)^{\alpha}(\frac{1}{\lambda_k}+|x|^2)^{a}e^{\hat{\phi}_{k}} = \rho.
\end{cases}
\end{equation}
In view of (\ref{new18}), we see that $\hat{\phi}_{k}$ must blow up at the origin and concentration must occur. In other words, (along a subsequence)
\begin{equation*}
(1+|x|^2)^{\alpha}(|x|^2+\frac{1}{\lambda_k})^{a}e^{\hat{\phi}_{k}} \rightarrow \rho \delta_0,\ \ \text{as}\ k \rightarrow +\infty.
\end{equation*}
Observe that under the given assumption (\ref{0.33}) on $\rho$ and (\ref{new4}), we see that necessarily,
\begin{equation}
\label{new21}
-1<a<0.
\end{equation}
Thus, by using a Pohozaev type inequality as above, in this case we obtain:
\begin{equation*}
\frac{\rho}{4\pi}(\rho-8\pi)=
\lim_{r\to0}\lim_{k\to+\infty}2a\int_{|x|\leq r}(1+|x|^2)^{\alpha}(|x|^2+\frac{1}{\lambda_k})^{a-1}|x|^2 e^{\hat{\phi}_{k}} \leq 0,
\end{equation*}
As a consequence, $\rho \leq 8\pi$, in contradiction with (\ref{0.33}), and Claim 2 is established.

So, we can use \cite{bm,ls} and by (\ref{0.33}), we conclude that $\phi_k$ must blow-up exactly at one point, say $q \in \mathbb{R}^2$, and as $k \rightarrow +\infty$ (along a subsequence)
\begin{equation}
\label{new22}
(1+|x|^2)^{\alpha}(1+\frac{|x|^2}{\lambda_k})^{a}e^{\phi_k} \rightharpoonup 8\pi \delta_q
\end{equation}
weakly in the sense of measure (locally) in $\mathbb{R}^2$.

In particular, for any $R>0$ sufficiently large, we have:
\begin{equation}
\label{new22a}
\int_{|x| \leq \frac{R}{\sqrt{\lambda_k}}}(\frac{1}{\lambda_k}+|x|^2)^{\alpha}(1+|x|^2)^{a}e^{\xi_k}=
\int_{|x| \leq R}(1+|x|^2)^{\alpha}(1+\frac{|x|^2}{\lambda_k})^{a}e^{\phi_k} \rightarrow 8\pi\ \ \mbox{as}\ k \rightarrow +\infty.
\end{equation}
As a consequence of (\ref{new22}) and (\ref{new22a}) and in view of our assumption on $\rho$ in (\ref{0.33}), we see that necessarily also the sequence $\hat{\phi}_{k}$ in (\ref{new19}) must blow-up at the origin and "concentration" must occur. In other words, (along a subsequence) for $r_0 > 0$ sufficiently small, as $k \to+\infty$, we have:
\begin{equation}
\label{new23}
(1+|x|^2)^{\alpha}(\frac{1}{\lambda_k}+|x|^2)^{a}e^{\hat{\phi}_{k}} \rightharpoonup \beta_{\infty} \delta_0,
\end{equation}
weakly in the sense of measure in $B_{r_0}$, with
\begin{equation}
\label{new24}
\beta_{\infty} =\lim_{r\to 0^+}\lim_{k \to +\infty}\int_{B_r}(1+|x|^2)^{\alpha}(\frac{1}{\lambda_k}+|x|^2)^{a}e^{\hat{\phi}_k(x)}dx.
\end{equation}
Our next goal is to identify the value $\beta_{\infty}$. To this purpose we use a Pohozaev type inequality (in the usual way) to obtain:
\begin{equation}
\label{new25}
\begin{aligned}
\beta_{\infty}(\frac{\beta_{\infty}}{4\pi}-2)
=~&2\alpha\lim_{r\to 0^+}\lim_{k\to+\infty}\int_{|x| < r}(1+|x|^2)^{\alpha-1}(\frac{1}{\lambda_k}+|x|^2)^{a}|x|^2e^{\hat{\phi}_k}dx\\
&+2a\lim_{r\to 0^+}\lim_{k\to+\infty}\int_{|x|<r}(1+|x|^2)^{\alpha}(\frac{1}{\lambda_k}+|x|^2)^{a-1}|x|^2e^{\hat{\phi_k}}dx\\
=~&2a\beta_{\infty}-2a\lim_{r\to0^+}\lim_{k\to+\infty}\int_{|x|<r}\frac{(1+|x|^2)^{\alpha}(\frac{1}{\lambda_k}+|x|^2)^{a}e^{\hat{\phi}_k}}{\lambda_k|x|^2+1}dx,
\end{aligned}
\end{equation}
where we have used (\ref{new23}) in order to see that the first integral in (\ref{new25}) vanishes. Thus, from (\ref{new25}) we find:
\begin{equation}
\label{new26}
\beta_{\infty}(\frac{\beta_{\infty}}{4\pi}-2(a+1))
=-2a\lim_{r\to 0^+}\lim_{k\to+\infty}\int_{|x|<r}\frac{(1+|x|^2)^{\alpha}(\frac{1}{\lambda_k}+|x|^2)^{a}e^{\hat{\phi}_k}}{\lambda_k|x|^2+1}
\end{equation}
and, by recalling (\ref{new21}), we deduce that,
\begin{equation}\label{new27}
\beta_{\infty} \geq 8\pi(a+1).
\end{equation}
In order to estimate the integral in (\ref{new26}), we analyse the blow-up behaviour of $\hat{\phi}_k$ around the origin.

To this purpose, we let $\hat{x}_k \in \overline{B}_{r_0}(0):$
\begin{equation}
\label{new28}
\hat{\phi}_k(\hat x_k)=\max_{|x|\leq r_0}\hat\phi_k\to+\infty,~\mbox{as}~k\to+\infty.
\end{equation}
We check that,
\begin{equation}
\label{new29}
\hat{\phi}_k(\hat{x}_k)+2(a+1)\log(\max\{\frac{1}{\sqrt{\lambda_k}},|\hat{x}_k|\})\leq C
\end{equation}
for suitable $C > 0$. To establish (\ref{new29}), we argue by contradiction and (along a subsequence) we suppose that,
\begin{equation*}
\hat{\phi}_k(\hat x_k)+2(a+1)\log(\max\{\frac{1}{\sqrt{\lambda_k}},|\hat{x}_k|\})\rightarrow+\infty,\ \ \mathrm{as}\ k \rightarrow + \infty.
\end{equation*}
Then by setting:
\begin{equation}
\label{new30}
\tau_k=\max\{\frac{1}{\sqrt{\lambda_k}},|\hat x_k| \} \rightarrow 0,
\end{equation}
as $k \rightarrow +\infty$, and
\begin{equation}
\label{new31}
\psi_k(x)=\hat\phi_k(\hat x_k+\tau_k x)+2(a+1)\log\tau_k,
\end{equation}
we see that $\psi_k$ satisfies:
\begin{equation*}
\begin{cases}
-\Delta \psi_k = (1+|\hat{x}_k+\tau_k x|^{2})^{\alpha}(\frac{1}{\lambda_k \tau_k^2}+|\frac{\hat{x}_k}{\tau_k}+x|^2)^{a}e^{\psi_k(x)} := f_k(x)\ \mbox{in}\ \mathbb{R}^2,\\
\int_{\mathbb{R}^2}f_k(x)dx = \rho,\quad\ \psi_k(0)\rightarrow+\infty~\mbox{as}~k \rightarrow +\infty.
\end{cases}
\end{equation*}
Furthermore, by the definition of $\tau_k$ in (\ref{new30}) we see that, for $\varepsilon >0$ sufficiently small, we have:
\begin{equation*}
1-2\varepsilon \leq(\frac{1}{\lambda_k \tau_k^2}+|\frac{\hat x_k}{\tau_k}+ x|^2)\leq2+3\varepsilon,\ \ \forall\ |x| \leq \varepsilon.
\end{equation*}
So the standard blow-up analysis of \cite{bm,ls} applies to $\psi_k$ around the origin and implies that,
\begin{equation*}
\int_{B_{\varepsilon\tau_k}(\hat{x}_k)}(1+|x|^2)^{\alpha}(\frac{1}{\lambda_k}+|x|^2)^{a}e^{\hat{\phi}_k}
=\int_{|x|<\varepsilon}f_k(x)dx \to 8\pi,\ \ \mbox{as}\ k\to+\infty,
\end{equation*}
which, together with (\ref{new22}) yields to a contradiction of our assumption (\ref{0.33}). Thus (\ref{new29}) is established.

We define,
\begin{equation}
\label{new32}
s_k = e^{-\frac{\hat\phi_k(\hat x_k)}{2(a+1)}}
\end{equation}
and by (\ref{new29}), we find that:
\begin{equation}
\label{new33}
\frac{|\hat{x}_k|}{s_k} \leq C\ \ \mbox{and}\ \ s_k\sqrt{\lambda_k} \geq C,\ \ \forall\ k \in \mathbb{N}
\end{equation}
for a suitable constant $C > 0$. Let
\begin{equation*}
\eta_k(x)=\hat\phi_k(s_k x)-\hat\phi_k(\hat x_k),
\end{equation*}
which satisfies
\begin{equation}
\label{new36}
\begin{cases}
-\Delta\eta_k=(1+s_k^2|x|^2)^{\alpha}(\frac{1}{\lambda_k s^2_k} + |x|^2)^{a}e^{\eta_k(x)}:= \hat f_k(x)\ \ \mbox{in}\ \mathbb{R}^2,\\
\int_{\mathbb{R}^2}\hat f_k(x)dx =\rho,\quad 0=\eta_k(\frac{\hat x_k}{s_k})=\max_{|x|\leq\frac{r_0}{s_k}}\eta_k(x).
\end{cases}
\end{equation}

\begin{flushleft}
{\bf Claim 3:} (along a subsequence)~$\lim_{k\to+\infty}s_k\sqrt{\lambda_k}=L>0.$
\end{flushleft}

We prove it by contradiction. In views of (\ref{new33}), we assume,
\begin{equation}
\label{new35}
s_k\sqrt{\lambda_k} \rightarrow +\infty,\ \ \text{as}\ k \rightarrow +\infty.
\end{equation}
Then, for fixed $r>0$ and $R\gg1$, for large $k\in\mathbb{N}$, we can estimate the integral in (\ref{new26}) as follows:
\begin{equation*}
\begin{aligned}
&\int_{|x|<r}\frac{(1+|x|^2)^{\alpha}(\frac{1}{\lambda_k}+|x|^2)^{a}e^{\hat{\phi}_k}}{\lambda_k|x|^2 + 1}dx\\
=~&\int_{|x|\leq Rs_k}\frac{(1+|x|^2)^{\alpha}(\frac{1}{\lambda_k}+|x|^2)^{a}e^{\hat{\phi}_k}}{\lambda_k|x|^2+1}dx
+\int_{s_kR<|x|<r}\frac{(1+|x|^2)^{\alpha}(\frac{1}{\lambda_k}+|x|^2)^{a}e^{\hat{\phi}_k}}{\lambda_k|x|^2+1}dx\\
\leq~&\int_{|x|\leq R}\frac{(1+s^2_k|x|^2)^{\alpha}(\frac{1}{\lambda_k s_k^2}+|x|^2)^a e^{\eta_k(x)}}{\lambda_k s^2_k|x|^2+1}dx
+\frac{\rho}{\lambda_k s^2_kR^2+1}.
\end{aligned}
\end{equation*}
So, we can use (\ref{new35}) and  the dominated converge theorem to see that the right hand side of the above equality tends to $0$, as $k\to+\infty$. As a consequence, from (\ref{new26}) we find that $\beta_{\infty} =8\pi(1+a)$, and so $\rho=8\pi(2+a)=8\pi\alpha,$ (by recalling (\ref{new4})) in contradiction to (\ref{0.33}).

Therefore Claim 3 is establish, and in view of (\ref{new36}) and well known elliptic estimates, along a subsequence, we have:
\begin{equation*}
\eta_k \rightarrow \eta\ \ \text{uniformly in}\ C^2_{\mathrm{loc}}(\mathbb{R}^2),
\end{equation*}
with $\eta$ satisfying:
\begin{equation}
\label{new37}
\begin{cases}
\Delta\eta+(\frac{1}{L}+|x|^2)^{a} e^{\eta} = 0\ \ \mathrm{in}\ \mathbb{R}^2,\\
\rho_{\infty}=\int_{\mathbb{R}^2}(\frac{1}{L}+|x|^2)^{a}e^{\eta} <+\infty.
\end{cases}
\end{equation}
By recalling (\ref{new21}), we can use for problem (\ref{new37}) (after suitable scaling) the part (ii) of Proposition \ref{pr2.new} and conclude that $\eta$ is radially symmetric about the origin, where it attains its maximum value. As a consequence, $\frac{\hat{x}_k}{s_k} \rightarrow 0$, and so $|\hat{x}_k|\sqrt{\lambda_k} \rightarrow 0$, as $k \rightarrow +\infty$. Furthermore, (\ref{new37}) is solvable if and only if $\rho_{\infty}\in(8\pi(1+a),8\pi)$, and by means of a Pohozaev identity, (as in \cite{tt}) we also know that,
\begin{equation}
\label{new38}
\rho_{\infty}(\frac{\rho_{\infty}}{4\pi}-2(a+1))=-2a\int_{\mathbb{R}^2}\frac{(\frac{1}{L}+|x|^2)^a e^{\eta}}{(L|x|^2 +1)}dx.
\end{equation}
With this information, we can argue as above to estimate the integral term in (\ref{new26}). Indeed for fixed $r>0$ sufficiently small  and by taking a large $R>0$ we obtain:
\begin{equation*}
\lim_{k \rightarrow +\infty}\int_{|x|< r}\frac{(1+|x|^2)^{\alpha}(\frac{1}{\lambda_k}+|x|^2)^{a}e^{\hat\phi_k(x)}}{\lambda_k |x|^2 +1}dx
=\int_{|x|\leq R}\frac{(\frac{1}{L}+|x|^2)^{a}}{L|x|^2 + 1}e^{\eta} + O(\frac{1}{R^2}).
\end{equation*}
Thus, by letting $R \rightarrow +\infty$, from (\ref{new26}) and (\ref{new38}), we conclude:
\begin{equation*}
\beta_{\infty} (\frac{\beta_\infty}{4\pi}-2(a+1))=-2a\int_{\mathbb{R}^2}\frac{(\frac{1}{L}+|x|^2)^a}{L|x|^2 + 1}e^{\eta(x)}dx
=\rho_{\infty}(\frac{\rho_{\infty}}{4\pi} - 2(a+1)).
\end{equation*}
Hence, by using (\ref{new27}), we derive that,
\begin{equation}\label{new39}
\beta_{\infty} = \rho_{\infty}.
\end{equation}
At this point, by Claim 3 and (\ref{new39}), for fixed $r>0$ and $\varepsilon>0$ small we may conclude:
\begin{equation*}
\begin{aligned}
\int_{\frac{1}{r}\leq|x|\leq\varepsilon (\sqrt{\lambda_k})}(1+|x|^2)^{\alpha}(1+\frac{|x|^2}{\lambda_k})^{a}e^{\phi_k}
=\int_{\frac{1}{\varepsilon (\sqrt{\lambda_k})}\leq|x|\leq r}(1+|x|^2)^{\alpha}(\frac{1}{\lambda_k}+|x|^2)^{a}e^{\hat\phi_k}\to0,
\end{aligned}
\end{equation*}
as $k\to+\infty$ and $\varepsilon\to0^+.$ In turn, for fixed $r > 0$ and $\varepsilon > 0$ sufficiently small, we have:
\begin{equation*}
\begin{aligned}
\int_{|x|\leq\varepsilon}(\frac{1}{\lambda_k}+|x|^2)^{\alpha}(1+|x|^2)^{a}e^{\xi_k}=~&
\int_{|x|<\frac{1}{r}}(1+|x|^2)^{\alpha}(1+\frac{|x|^2}{\lambda_k})^{a}e^{\phi_k}dx\\
&+\int_{\frac{1}{r}\leq|x|\leq\varepsilon(\sqrt{\lambda_k})}(1+|x|^2)^{\alpha}(1+\frac{|x|^2}{\lambda_k})^{a}e^{\phi_k}dx \to 8\pi,
\end{aligned}
\end{equation*}
as $k \rightarrow +\infty$ and $\varepsilon \rightarrow 0^+$. As a consequence, we have $m_0 = 8\pi$ in (\ref{new214a}). Therefore, the mass distribution of $\xi_k$ cannot "concentrate", as otherwise $\rho=8\pi$. In other words, $\xi_k$ must be uniformly bounded from below, and we can use elliptic estimates locally in $\mathbb{R}^2 \backslash \{ 0\}$ to arrive at the conclusion of Theorem \ref{newth2.1}.
\end{proof}

On the basis of Theorem \ref{newth2.1}, for the original sequence $v_k$ satisfying (\ref{new3}) and (\ref{new5}), we obtain that,
\begin{equation*}
v_k \rightarrow v\ \ \text{uniformly in}\ C^2_{\mathrm{loc}}(\mathbb{R}^2\setminus\{0\})\ ~\mbox{as}~k \to+\infty
\end{equation*}
with $v$ satisfying:
\begin{equation*}
\begin{cases}
\Delta v + (1+|x|^2)^{a}e^{v} = 4\pi(\alpha-2)\delta_0\ \ \text{in}\ \mathbb{R}^2,\\
\int_{\mathbb{R}^2}(1+|x|^2)^{a}e^{v}=\rho-8\pi,
\end{cases}
\end{equation*}
and it suffices to pull back (via (\ref{new2})) such information to $u_k$ in order to derive the statement of Theorem B.2.

\section{The proof of Theorem 1.1}
\noindent{\em Proof of Theorem \ref{th1.1}.} To establish Theorem \ref{th1.1} (and as a consequence, Theorem B.1) we start by showing that $u^*_t$ must blow-up {\em only} at the point $\mathfrak{q}$ and "mass concentration" {\em can not} occur.

To this purpose, we denote by $\Sigma \neq \emptyset$ the blow-up set of $u_t^*$. Notice that around any possible point $p\in \Sigma\backslash\{\mathfrak{q }\}$ we can apply the blow-up analysis of \cite{bt1}, \cite{bm} and \cite{ls}. Therefore, if we suppose (by contradiction) that $\Sigma\backslash\{\mathfrak{q}\}\neq\emptyset$ then we have that "concentration" must occur and $\sigma_p = 4$ if $p \in \Sigma\backslash \{\mathfrak{q},q_{d+1},\cdots,q_N \}$ while $\sigma_p = 4(\alpha_i +1)$ if $p = q_i$ for some $i \in\{d+1,\cdots,N\}$. By assumption: $\alpha_i\in\mathbb{N}$, $\forall i\in\{1,\cdots,N\}$ and $\rho \notin 8 \pi \mathbb{N},$ hence we see that necessarily $\mathfrak{q}\in \Sigma.$ Indeed, in case $\mathfrak{q}\notin\Sigma$ then we would find that
$\rho=8\pi n,\ \ \text{for some}\ n \in \mathbb{N},$ which is impossible. Thus,
\begin{equation}
\label{3.0.a}
\rho=\sigma_{\mathfrak{q}} + 8\pi n,\ \ \text{for some}\ n \in \mathbb{N}.
\end{equation}

Furthermore, around $\mathfrak{q}$ we can use isothermal coordinates and from the sequence: $u_t-\text{log}\int_{M}h e^{u_t-G_t^{(2)}}$, we obtain (as indicated in the Introduction) a local sequence $\bar u_t$ (introduced in (\ref{eq116})) which satisfies all the assumptions of Theorem 1.4 of  \cite{llwz}. As a consequence, we derive the following about the local blow up mass $\sigma_{\mathfrak{q}}$ at $\mathfrak{q}$ :
\begin{equation}
\label{3.0.b}
\sigma_{\mathfrak{q}}=8\pi m~\mbox{with}~m\in \mathbb{N}~\mbox{and}~1 \leq m\leq [\frac{1}{2}\sum_{i=1}^d \alpha_i].
\end{equation}
Again (\ref {3.0.a}) and (\ref {3.0.b})  yield  to a contradiction, since by assumption $\rho \notin 8\pi \mathbb{N}.$  Consequently, $S\setminus\{\mathfrak{q}\} = \emptyset$ so that $u^*_t$ can blow-up only at $\mathfrak{q}$ with local blow up mass
$\sigma_{\mathfrak{q}}$ as specified in (\ref{3.0.b}). Therefore mass concentration cannot occur, and  $u^*_t$ is bounded uniformly in $L_{\mathrm{loc}}^{\infty}(M\backslash \{ \mathfrak{q} \})$.

At this point the desired conclusion follows by well known elliptic estimates; which imply that (along a subsequence),
\begin{equation*}
u^*_t\rightarrow w^*\ \ \text{uniformly in}\ C_{\mathrm{loc}}^2(M\backslash \{\mathfrak{q} \}),\ \text{as}\ t \rightarrow 0.
\end{equation*}
Moreover, with abuse of notation, if we denote by $B_r(\mathfrak{q})$ the ball of center $\mathfrak{q}$ and (small) radius $r>0$ with respect to the Riemannian metric in $M$, then by (\ref{0.31}) we find:
\begin{equation*}
\begin{aligned}
\int_{M}he^{u^*_t} &=\int_{M \backslash B_r(\mathfrak{q})}h e^{u^*_t}+\int_{M}he^{u^*_t}(\int_{B_r(\mathfrak{q})}\frac{h e^{u^*_t}}{\int_{M}h e^{u^*_t}})\\
&=\int_{M\backslash B_r(\mathfrak{q})}he^{u^*_t}+\int_{M}he^{u^*_t}(\frac{8\pi m}{\rho}+\varepsilon_{r,t}),
\end{aligned}
\end{equation*}
with $\lim_{r\rightarrow 0^+}\lim_{t \rightarrow 0}\varepsilon_{r,t}=0.$ In other words
\begin{equation*}
(\frac{\rho-8\pi m}{\rho}-\varepsilon_{r,t})\int\limits_{M}he^{u^*_t} =\int_{M\backslash B_r(\mathfrak{q})}he^{u^*_t},
\end{equation*}
and by passing to the limit as $t\rightarrow 0$ and then as $r\rightarrow 0^+$ we find,
\begin{equation*}
\lim_{t\to 0}\int_{M} h e^{u^*_t}=\frac{\rho}{\rho-8\pi m}\int_{M} h e^{w^*},
\end{equation*}
as claimed. As a consequence,
\begin{equation*}
\rho \frac{he^{u^*_t}}{\int_M he^{u^*_t}} \rightharpoonup 8\pi m \delta_{\mathfrak{q}} + (\rho-8\pi m)\frac{he^{w^*}}{\int_M he^{w^*}},
\end{equation*}
weakly in the sense of measure. Thus, $w^*$ satisfies (in the sense of distribution):
\begin{equation*}
\begin{aligned}
\Delta w^*+(\rho-8m\pi)(\frac{he^{w^*}}{\int_M he^{w^*}}-1)=4\pi(\sum\limits_{i=1}^d \alpha_i-2m)(\delta_\mathfrak{q} -1)
+4\pi\sum\limits_{i=d+1}^N\alpha_i(\delta_{q_i}-1).
\end{aligned}
\end{equation*}
Now we are going to show $\int_Mw^*(x)\mathrm{d}x=0$. From the Green's representation formula, we have
\begin{equation}
\label{l-3.4}
u_t(x)=\int_M\rho G(x,z)\frac{h^*(z)e^{u_t^*(z)}}{\int_Mh^*e^{u_t^*}}\mathrm{d}z.
\end{equation}
By taking the limit $t\to0$ in (\ref{l-3.4}), we get for any $x\in M\setminus\{0\},$
\begin{equation}
\label{l-3.5}
w(x)+8m\pi G(x,0)=(\rho-8m\pi)\frac{\int_MG(x,z)h(z)e^{w_m(z)}dz}{\int_Mhe^{w_m}}+8m\pi G(x,0).
\end{equation}
By integrating (\ref{l-3.5}) on $M$, we see that
\begin{align*}
\int_Mw(x)dx=~&(\rho-8m\pi)\frac{\int_M\left(\int_MG(x,z)h(z)e^{w_m(z)}\mathrm{d}z\right)\mathrm{d}x}{\int_Mhe^{w_m}}\\
=~&(\rho-8m\pi)\frac{\int_M\left(\int_MG(x,z)\mathrm{d}x\right)h(z)e^{w_m(z)}\mathrm{d}z}{\int_Mhe^{w_m}}.
\end{align*}
Since $\int_MG(x,z)\mathrm{d}z=0$, we obtain $\int_Mw^*(x)\mathrm{d}x=0,$ and the proof is completed. $\hfill\Box$
\bigskip

\section{The bubbling analysis of Equation (\ref{eq115}) and the proof of \\Theorem \ref{th1.2}}
In this section we study the bubbling behavior of the solution $\bar u_t$ satisfying:
\begin{equation}
\label{eq400}
\Delta \bar u_t + h_1(x)|x-t\underline{e}|^{2\alpha_1}|x+t\underline{e}|^{2\alpha_2} e^{\bar u_t} = 0,\ \ \text{in}\ B_1(0).
\end{equation}
\begin{equation}
\label{eq401}
\begin{aligned}
|\bar u_t(x)-\bar u_t(y)|\leq C,\ \forall\ x,y \in \partial B_1(x),\quad \int_{B_1(0)}h_1(x)|x-t\underline{e}|^{2\alpha_1}|x+t\underline{e}|^{2\alpha_2}e^{\bar u_t} \leq C,
\end{aligned}
\end{equation}
and
\begin{equation}
\label{eq402}
0<h_1(x)\in C^2(\overline{B}_1(0)),\ \alpha_1,\alpha_2\in\mathbb{N}:\ |\alpha_1-\alpha_2|\leq 1.
\end{equation}
Furthermore, we let
\begin{equation}
\label{eq403}
v_t(y)=\bar u_t(ty)+2(1+\alpha_1+\alpha_2)\log\ t,
\end{equation}
which satisfies:
\begin{equation}
\label{eq404}
\Delta v_t+h_1(ty)|y-\underline{e}|^{2\alpha_1}|y+\underline{e}|^{2\alpha_2}e^{v_t}=0.
\end{equation}
We assume that,
\begin{equation}
\label{eq405}
\mbox{the origin is the only blow-up point of}\ \bar u_t\ \mbox{in}\ B_1(0),
\end{equation}
and set
\begin{equation}
\label{eq406}
\sigma_u=\frac{1}{2\pi}\lim_{r\to0^+}\lim_{t\to0}\int_{B_r(0)}h_1(x)|x-t\underline{e}|^{2\alpha_1}|x+t\underline{e}|^{2\alpha_2}e^{\bar u_t},
\end{equation}
and
\begin{align}
\label{2.6}
m_v=\frac{1}{2\pi}\lim_{R\to+\infty}\lim_{t\rightarrow0}\int_{B_R(0)}h_1(ty)|y-\underline{e}|^{2\alpha_1}|y+\underline{e}|^{2\alpha_2}e^{v_t}dy.
\end{align}
For the quantity $\sigma_u$ and $m_v$, we have the following relation:

\begin{proposition}
\label{pr4.1}
Let $\sigma_u$ and $m_v$ be defined in (\ref{eq406}) and (\ref{2.6}), then we have
\begin{equation}
\label{2.7}
\sigma_u^2-m_v^2=4(1+\alpha_1+\alpha_2)(\sigma_u-m_v).
\end{equation}
\end{proposition}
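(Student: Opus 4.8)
The plan is to extract the identity \eqref{2.7} from two Pohozaev-type identities applied at the two different scales, and then to compare them. First I would work at the inner scale: apply a Pohozaev identity to $v_t$ solving \eqref{eq404} on a large ball $B_R(0)$, multiplying the equation by $y\cdot\nabla v_t$ and integrating by parts. Since $h_1(ty)\to h_1(0)>0$ and the weight $|y-\underline e|^{2\alpha_1}|y+\underline e|^{2\alpha_2}$ is smooth and non-vanishing near $y=0$ but the blow-up mass of $v_t$ concentrates (after the further rescaling around each $p_{t,i}$) away from $\pm\underline e$, the boundary terms on $\partial B_R(0)$ are controlled using the logarithmic decay $v_t(y)+\frac{m_v}{1}\cdot\frac{1}{2\pi}\cdot 2\pi\log|y|$-type behaviour; carefully, I expect the identity to read, after sending $t\to0$ then $R\to\infty$,
\begin{equation*}
\frac{m_v}{2}\Big(\frac{m_v}{2}-2(1+\alpha_1+\alpha_2)\Big)=\text{(limit of a weighted interior integral)},
\end{equation*}
where the right-hand side collects the contributions of $\nabla\log\big(h_1(ty)|y-\underline e|^{2\alpha_1}|y+\underline e|^{2\alpha_2}\big)$ paired with $y\,e^{v_t}$ times the weight. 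Because $h_1(ty)\to h_1(0)$ its gradient contributes $O(t)\to 0$, and because the mass concentrates at finitely many points $p_i$ bounded away from $0$ and from $\pm\underline e$, the singular parts of the weight at $\pm\underline e$ do not enter; so that interior limit is simply $0$, giving $m_v^2=4(1+\alpha_1+\alpha_2)m_v$ — wait, that is too strong, so in fact one must keep the interior term and it will \emph{not} vanish, precisely because the centre $y=0$ of the Pohozaev ball is not a point where $e^{v_t}$ is negligible relative to the full mass unless $m_v$ accounts for all of it. This is the subtlety: $\sigma_u$ sees the mass in the $x$-ball $B_r(0)$, which after rescaling $x=ty$ corresponds to the $y$-ball $B_{r/t}(0)$, i.e. \emph{all} of the $v_t$-mass, so actually $m_v$ and $\sigma_u$ differ only through the change of the weight's homogeneous degree under scaling.

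So the cleaner route, which I would actually follow, is: apply one Pohozaev identity directly to $\bar u_t$ on $B_r(0)$ centred at the origin, using the weight $h_{1,t}(x)=h_1(x)|x-t\underline e|^{2\alpha_1}|x+t\underline e|^{2\alpha_2}$, obtaining
\begin{equation*}
\frac{\sigma_u}{2}\Big(\frac{\sigma_u}{2}-2(1+\alpha_1+\alpha_2)\Big)=\text{boundary terms}+\text{interior term}_r,
\end{equation*}
and separately apply the Pohozaev identity to $v_t$ on $B_R(0)$ to get the analogous relation with $m_v$ and $2(1+\alpha_1+\alpha_2)$. The key observation is that the \emph{interior terms agree in the limit}: under $x=ty$ the integrand $|x\mp t\underline e|^{2\alpha_j}=t^{2\alpha_j}|y\mp\underline e|^{2\alpha_j}$, and $\nabla\log h_1(x)\cdot x = \nabla\log h_1(ty)\cdot(ty)=O(t)\to0$ while the $\nabla\log|x\mp t\underline e|^{2\alpha_j}\cdot x$ terms transform \emph{exactly} into $\nabla\log|y\mp\underline e|^{2\alpha_j}\cdot y$, with the same limiting integrals against the concentrated mass. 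Hence the two interior limits coincide; call the common value $\Gamma$. Subtracting the two Pohozaev identities — and checking that the boundary terms vanish in both limits by the standard logarithmic-decay estimate away from the concentration set (here one uses assumption \eqref{eq405} and the mass bound \eqref{eq401}, plus the known fact from \cite{l,cl1} that the profile decays like $|y|^{-\text{const}}$) — the $\Gamma$'s cancel and we are left with
\begin{equation*}
\frac{\sigma_u^2}{4}-\frac{m_v^2}{4}=(1+\alpha_1+\alpha_2)(\sigma_u-m_v),
\end{equation*}
which is \eqref{2.7}.

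The main obstacle I anticipate is the rigorous control of the boundary terms and the interchange of limits $t\to0$, $r\to0$ (resp. $R\to\infty$): one must verify that $\int_{\partial B_r}$ of the quadratic expression $r\big(\tfrac12|\partial_\nu \bar u_t|^2-\tfrac12|\nabla_\tau\bar u_t|^2\big)+\dots$ converges to the value dictated by the total mass $\sigma_u$ (so that $\sigma_u$, not some intermediate quantity, appears), and likewise that $\int_{\partial B_R}$ in the $v_t$ variable converges with $m_v$. For this I would invoke the asymptotic expansion $\bar u_t(x)=-\frac{\sigma_u}{1}\cdot\frac{1}{2\pi}\cdot2\pi\log|x|+O(1)$ uniformly for $|x|=r$ small (after $t\to 0$), equivalently $v_t(y)=-2 m_v\log|y|+O(1)$ for $|y|=R$ large — these are exactly the decay statements guaranteed once the origin is the only blow-up point and no residual mass escapes, which is the content of the bubbling description established via \cite{bm,ls,bt1} in the earlier sections. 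A secondary, more bookkeeping-type difficulty is handling the zeros of the weight $|x\mp t\underline e|^{2\alpha_j}$ inside $B_r(0)$ when $t>0$: these sit at $x=\pm t\underline e\to 0$, so one should first fix $r$, send $t\to0$ so the zeros disappear into the origin, and only then send $r\to0$ — the order of limits in \eqref{eq406} is precisely arranged for this, and I would make that explicit.
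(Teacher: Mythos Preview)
Your overall plan is the paper's: a Pohozaev computation at the two scales. The paper executes it as a \emph{single} Pohozaev identity for $v_t$ on the annulus $B_{r/t}(0)\setminus B_R(0)$ (note $\partial B_{r/t}$ in $y$ is $\partial B_r$ in $x$, and $x\cdot\nabla\bar u_t=y\cdot\nabla v_t$). The two boundary contributions are read off from the gradient estimates of Lemma~4.A (taken from \cite{llwz}), giving $\pi\sigma_u^2$ and $\pi m_v^2$; the interior integral over the annulus simplifies because for $|y|\ge R$ one has
\[
\alpha_1\frac{y\cdot(y-\underline{e})}{|y-\underline{e}|^2}+\alpha_2\frac{y\cdot(y+\underline{e})}{|y+\underline{e}|^2}\longrightarrow\alpha_1+\alpha_2\qquad(R\to\infty),
\]
so the whole interior term equals $4\pi(1+\alpha_1+\alpha_2)(\sigma_u-m_v)$, and \eqref{2.7} follows.

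Your ``cleaner route'' --- two separate identities on $B_r$ (for $\bar u_t$) and $B_R$ (for $v_t$), then subtract --- is formally the same computation (their difference is literally the annulus identity), but your justification that the interior terms coincide has a gap. You argue that the integrands transform into each other under $x=ty$; true, but the \emph{domains} do not match: $B_r$ in $x$ becomes $B_{r/t}$ in $y$, not $B_R$. The discrepancy is precisely the annulus integral, which carries mass $2\pi(\sigma_u-m_v)$ --- the very quantity the proposition allows to be nonzero --- so saying the integrals agree ``against the concentrated mass'' tacitly presupposes $\sigma_u=m_v$ and begs the question. In your specific formulation, with the factor $2(1+\alpha_1+\alpha_2)$ already moved to the left, the residual interior terms \emph{do} happen to coincide, but only because $\tfrac{y\cdot(y\mp\underline{e})}{|y\mp\underline{e}|^2}-1=O(|y|^{-1})$ on the annulus; you have not made that observation, and once you do, you are performing exactly the paper's annulus computation. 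The boundary control you attribute to \cite{l,cl1} is in fact supplied here by Lemma~4.A.
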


\noindent{\bf Remark:} Proposition \ref{pr4.1} holds for general positive constants $\alpha_1,\alpha_2.$
\medskip

In order to prove Proposition \ref{pr4.1}, we shall use the following estimates established in \cite{llwz}
\medskip

\noindent {\bf Lemma 4.A.} \cite[Lemma 4.2]{llwz} {\em Let $\bar u_t$ satisfy (\ref{eq400}) and $v_t$ be defined in (\ref{eq403}). Then,
\begin{enumerate}
  \item For $r>0$ sufficiently small,
  \begin{equation}
  \label{2.8}
  \nabla \bar u_t=-\sigma_u\frac{x}{|x|^2}+\nabla\phi ~\mathrm{in}~C_{\mathrm{loc}}^0(B_r(0)\setminus\{0\}) \ \ \text{as}\ t\rightarrow 0,
  \end{equation}
  for some $\phi\in C^1(B_r(0)).$
  \item
  \begin{equation}
  \label{2.9}
  \nabla v_t=-(m_v+o_R(1))\frac{x}{|x|^2}~\mathrm{for}~|x|\geq R\ \text{and}\ t\rightarrow 0,
  \end{equation}
  where $o_R(1)\to0$ as $R\to+\infty.$
\end{enumerate}}
\medskip

\noindent {\em Proof of Proposition \ref{pr4.1}.}
We multiply the equation (\ref{eq404}) by $y\cdot\nabla v_t$ and integrate both sides over $B_{\frac{r}{t}}(0)\setminus B_R(0)$, we get that
\begin{equation}
\begin{aligned}
\label{poho}
&r\int_{\partial B_r(0)}\left(|\partial_{\nu}\bar u_t|^2-\frac12|\nabla\bar u_t|^2
+h_1(x)|x-t\underline{e}|^{2\alpha_1}|x+t\underline{e}|^{2\alpha_2}e^{\bar u_t}\right)ds_x
\nonumber\\
&-R\int_{\partial B_R(0)}\left(|\partial_{\nu}v_t|^2-\frac12|\nabla v_t|^2+ h_1(ty)|y-\underline{e}|^{2\alpha_1}|y+\underline{e}|^{2\alpha_2}e^{v_t}\right)ds_y
\nonumber\\
=&\int_{B_{\frac{r}{t}}(0)\setminus B_R(0)}2\left(1+\alpha_1\frac{y\cdot(y-\underline{e})}{|y-\underline{e}|^2}
+\alpha_2\frac{y\cdot(y+\underline{e})}{|y+\underline{e}|^2}\right)
h_1(ty)|y-\underline{e}|^{2\alpha_1}|y+\underline{e}|^{2\alpha_2}e^{v_t}dy\nonumber\\
&+\int_{B_{\frac{r}{t}}(0)\setminus B_R(0)}\left(ty\cdot\left(\frac{\nabla_xh_1(ty)}{h_1(ty)}\right)\right)
h_1(ty)|y-\underline{e}|^{2\alpha_1}|y+\underline{e}|^{2\alpha_2}e^{v_t}dy,
\end{aligned}
\end{equation}
where we used $x\cdot\nabla\bar u_t=y\cdot\nabla v_t$ and $\partial_{\nu}$ denotes the derivative along the normal direction. When $R\to\infty,$ we note that
\begin{equation}
\label{alpha}
\alpha_1\frac{y\cdot(y-\underline{e})}{|y-\underline{e}|^2}
+\alpha_2\frac{y\cdot(y+\underline{e})}{|y+\underline{e}|^2}\rightarrow\alpha_1+\alpha_2,~\mathrm{for}~|y|\geq R.
\end{equation}
Using (\ref{2.8}), (\ref{2.9}), (\ref{poho}) and (\ref{alpha}), we can get (\ref{2.7}). \hfill $\square$
\medskip

Before analyzing the behaviour of $v_t$, we recall the following theorem from \cite[Theorem 2.1]{lwz}.
\medskip

\noindent {\bf Theorem 4.A.}\quad {\em
Let $u$ be a solution of the following equation
\begin{equation}
\label{2.a.1}
\Delta u+e^u=\sum_{i=1}^l4\pi\alpha_i\delta_{p_i}~\mathrm{in}~\mathbb{R}^2,\quad \int_{\mathbb{R}^2}e^udx<+\infty.
\end{equation}
Then
\begin{equation}
\label{2.a.2}
\int_{\mathbb{R}^2}e^u\mathrm{d}x=4\pi\left(\sum_{i=1}^l\alpha_i+\frac{\alpha}{2}\right)\in8\pi\mathbb{N},~\mathrm{with}~\alpha>2.
\end{equation}}
\medskip

Going back to the study of $v_t$, we restate Theorem \ref{th1.2} as follows:
\begin{proposition}
\label{pr4.2}
Let $\bar u_t$ be a sequence of blow up solutions of (\ref{eq400}), and assume that (\ref{eq401}), (\ref{eq402}) and (\ref{eq405}) hold. If $v_t$, $\sigma_u$ and $m_v$ are defined in (\ref{eq403}), (\ref{eq406}) and (\ref{2.6}) respectively, then $v_t$ blows up at $m$-disctinct points $\{p_1,\cdots,p_m\}\subset\mathbb{R}^2\setminus\{\underline{e},-\underline{e}\}$ and $\sigma_u=m_v=4m$ with $1\leq m\leq\min\{\alpha_1,\alpha_2\}.$
 \end{proposition}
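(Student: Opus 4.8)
The plan is to determine the local masses $\sigma_u$ and $m_v$ by combining three ingredients: the Pohozaev-type relation (\ref{2.7}) from Proposition \ref{pr4.1}, the quantization of mass for the limiting Liouville equations (Theorem 4.A and the Brezis--Merle/Li--Shafrir analysis of \cite{bm,ls}), and the integrality of the exponents $\alpha_1,\alpha_2$. First I would perform the rescaling (\ref{eq403}) and observe that, away from the two frozen points $\pm\underline{e}$, the sequence $v_t$ satisfies a Liouville equation $\Delta v_t + h_t(y)e^{v_t}=0$ with $h_t$ bounded above and below on compact subsets of $\mathbb{R}^2\setminus\{\pm\underline{e}\}$; thus the standard blow-up analysis of \cite{bm,ls} applies to $v_t$ on $\mathbb{R}^2\setminus\{\pm\underline{e}\}$ and produces a finite blow-up set $\{p_1,\dots,p_m\}\subset\mathbb{R}^2\setminus\{\pm\underline{e},0\}$ with each local mass equal to $8\pi$, so that $m_v=4m$ for some $m\ge 0$. (One has to check that $\pm\underline{e}$ themselves cannot be blow-up points of $v_t$, and that $0$ is not one either — the latter because by (\ref{eq405}) the origin is the only blow-up point of $\bar u_t$, and the rescaling pushes mass at the origin out to infinity; the former because near $\pm\underline{e}$ the coefficient $|y\mp\underline{e}|^{2\alpha_i}$ vanishes to integer order $2\alpha_i$, so any concentration there would contribute a mass in $8\pi(1+\alpha_i)\mathbb{N}$ which, together with the rest, is incompatible with the constraint below — I would argue this by the same degree/quantization bookkeeping.) Likewise the origin is genuinely a blow-up point of $\bar u_t$, so $\sigma_u>0$, and by the local mass analysis of \cite{llwz} (cf. the discussion around (\ref{3.0.b})) one has $\sigma_u=8\pi m'/(2\pi)=4m'$, i.e. $\sigma_u\in 4\mathbb{N}$ — actually I would instead derive $\sigma_u\in 4\mathbb{N}$ directly: pulling $v_t$ back by inversion (as for $\hat\xi_k$ in Section 2), the total mass at the origin in the $\bar u_t$-scale is quantized in $8\pi\mathbb{N}$ by Theorem 4.A applied to the limiting entire solution, since $\alpha_1,\alpha_2\in\mathbb{N}$.

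Next I would feed these facts into (\ref{2.7}). Writing $\sigma_u=4s$ and $m_v=4m$ with $s,m\in\mathbb{N}\cup\{0\}$, relation (\ref{2.7}) becomes $16(s^2-m^2)=4(1+\alpha_1+\alpha_2)\cdot 4(s-m)$, i.e. $(s-m)\bigl(s+m-(1+\alpha_1+\alpha_2)\bigr)=0$. Hence either $\sigma_u=m_v$, or $s+m=1+\alpha_1+\alpha_2$. I then need to exclude the second alternative. The key point is a monotonicity/inclusion relation between the two masses: since $v_t$ is obtained from $\bar u_t$ by zooming in at scale $t$ around the origin, the mass $m_v$ captured on any fixed large ball $B_R(0)$ in the $y$-variable corresponds to mass on the shrinking ball $B_{Rt}(0)$ in the $x$-variable, whereas $\sigma_u$ is the mass on a fixed small ball $B_r(0)$; because $B_{Rt}(0)\subset B_r(0)$ for $t$ small, and the integrand is nonnegative, a standard diagonal/selection argument gives $m_v\le\sigma_u$, so $s\ge m$. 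Combined with the two alternatives, if $s+m=1+\alpha_1+\alpha_2$ and $s\ge m\ge 1$ one would get $s\ge\lceil (1+\alpha_1+\alpha_2)/2\rceil$; under the hypothesis $|\alpha_1-\alpha_2|\le 1$ this forces $s\ge\alpha_1$ (say $\alpha_1\le\alpha_2$, so $\alpha_2\in\{\alpha_1,\alpha_1+1\}$ and $1+\alpha_1+\alpha_2\in\{2\alpha_1+1,2\alpha_1+2\}$, whence $s\ge\alpha_1+1$). But the total mass available is bounded: by (\ref{eq401}) the full integral $\int_{B_1}h_{1,t}e^{\bar u_t}$ is bounded, and the limiting entire profile at the origin (after inversion) is a solution of a Liouville equation with a singularity of strength determined by $\rho$, which by the Pohozaev-type obstruction (cf. (\ref{new11}), Theorem 4.A) forces $\sigma_u\le 2(1+\alpha_1+\alpha_2)$ with equality only in the quantized "concentration" case $\rho\in 8\pi\mathbb{N}$ — which is excluded. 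A cleaner route, which I would ultimately use: the total mass at the origin for $\bar u_t$, namely $\sigma_u$, must satisfy $\sigma_u\le 2\alpha_1+2\alpha_2+2$ always, and the borderline value $\sigma_u=2(1+\alpha_1+\alpha_2)$ is exactly the "spherical Harnack fails / one-bubble" value which by Theorem \ref{th1.1} and $\rho\notin 8\pi\mathbb{N}$ cannot be the total mass (it would force $\rho\in 8\pi\mathbb{N}$). This rules out $s+m=1+\alpha_1+\alpha_2$ with $m\ge 1$, and it also rules out $m=0$: if $m_v=0$ then no mass escapes to the $v_t$-scale, forcing $\sigma_u=0$ (again via the inclusion $m_v\le\sigma_u$ read backwards, or directly: if $v_t$ has no blow-up point then $v_t$ is locally bounded above, all the mass $\sigma_u$ sits at the origin in the $x$-scale, which by the inversion argument would again quantize $\rho$). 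Hence $\sigma_u=m_v=4m$ with $m\ge 1$.

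Finally I would establish the two-sided bound $1\le m\le\min\{\alpha_1,\alpha_2\}$. The lower bound $m\ge 1$ is exactly what the previous paragraph gives. For the upper bound, I would use the location equations: at each blow-up point $p_i$ the limit of $v_t$ (after the second rescaling of \cite{l,cl1}) is a standard bubble, and matching the logarithmic behaviour of $v_t$ near $p_i$ against the global Green's-function representation yields the balancing identities (\ref{1.13}), $\alpha_1\frac{p_i-\underline e}{|p_i-\underline e|^2}+\alpha_2\frac{p_i+\underline e}{|p_i+\underline e|^2}=2\sum_{j\ne i}\frac{p_i-p_j}{|p_i-p_j|^2}$. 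Summing a suitable combination of these (or, more robustly, using the Pohozaev identity (\ref{2.7}) together with the analogous Pohozaev identities localized at $\pm\underline{e}$, which encode that the total exotic mass at $\underline{e}$ and $-\underline{e}$ is $4\alpha_1$ and $4\alpha_2$ respectively while each $p_i$ carries mass $4$) gives $4m=m_v\le 4\min\{\alpha_1,\alpha_2\}$; concretely, one shows that the number of $8\pi$-bubbles that can be "fed" by the singular weight $|y-\underline e|^{2\alpha_1}|y+\underline e|^{2\alpha_2}$ under the global mass constraint $m_v=\sigma_u$ and the structure (\ref{2.7}) is at most $\min\{\alpha_1,\alpha_2\}$. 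The main obstacle I anticipate is precisely this last inequality $m\le\min\{\alpha_1,\alpha_2\}$ (as opposed to, say, $m\le\alpha_1+\alpha_2$): it is not forced by (\ref{2.7}) alone and requires genuinely using the vanishing of the weight \emph{simultaneously} at the two points $\pm\underline e$ together with $|\alpha_1-\alpha_2|\le 1$, most likely via separate Pohozaev identities centered at $+\underline e$ and at $-\underline e$ to see that neither point can "absorb" more than its share of concentration mass, and then a counting argument on the remaining bubbles $p_i$.
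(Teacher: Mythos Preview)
Your overall framework is right—Brezis--Merle alternatives for $v_t$, the Pohozaev relation (\ref{2.7}), and mass quantization—but you are missing the one technical lemma that does almost all the work in the paper's proof, and as a result several of your steps remain vague or go in the wrong direction.

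The missing ingredient is the \emph{concentration criterion}: if $\sigma_u\ge 2(1+\alpha_1+\alpha_2)$ then $\bar u_t\to-\infty$ uniformly on compact subsets of $B_1\setminus\{0\}$ (so the global mass concentrates and $\rho=2\pi\sigma_u\in 8\pi\mathbb{N}$). The paper proves this by a short maximum-principle comparison: let $z_t$ solve $\Delta z_t+h_{1,t}e^{\bar u_t}=0$ in $B_r$ with $z_t=-|\bar u_t|$ on $\partial B_r$, so $z_t\le\bar u_t$ and $\int h_{1,t}e^{z_t}<C$; but the limit $z$ of $z_t$ satisfies $-\Delta z\ge 2\pi\sigma_u\delta_0$, so $e^z\ge C|x|^{-\sigma_u}$, contradicting integrability of $|x|^{2(\alpha_1+\alpha_2)}e^{z}$ when $\sigma_u\ge 2(1+\alpha_1+\alpha_2)$. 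You never state or use this; instead you assert ``$\sigma_u\le 2\alpha_1+2\alpha_2+2$ always,'' which is false as an a priori statement (it is exactly what the criterion plus $\rho\notin 8\pi\mathbb{N}$ \emph{yields}).

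With this criterion in hand, the paper runs through the three Brezis--Merle alternatives for $v_t$ cleanly. You conflate the alternatives (``if $v_t$ has no blow-up point then $v_t$ is locally bounded above'' ignores the possibility $v_t\to-\infty$). In the paper: if $v_t$ is locally bounded, Theorem~4.A gives $m_v=2\alpha_1+2\alpha_2+\theta$ with $\theta>2$, so $\sigma_u=m_v\ge 2(1+\alpha_1+\alpha_2)$ by (\ref{2.7}), concentration, contradiction. If $v_t\to-\infty$, then $m_v=0$ and (\ref{2.7}) gives $\sigma_u=4(1+\alpha_1+\alpha_2)$, again concentration, contradiction. If $\pm\underline e$ is a blow-up point, then $m_v\ge 4(1+\min\{\alpha_1,\alpha_2\})\ge 2(1+\alpha_1+\alpha_2)$ \emph{precisely because} $|\alpha_1-\alpha_2|\le 1$, so again $\sigma_u=m_v$ forces concentration. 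If $\sigma_u>m_v=4m$, (\ref{2.7}) gives $\sigma_u=4(1+\alpha_1+\alpha_2-m)>2(1+\alpha_1+\alpha_2)$, concentration. So $\sigma_u=m_v=4m$ and no concentration, hence $4m<2(1+\alpha_1+\alpha_2)$.

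Finally, you badly overcomplicate the bound $m\le\min\{\alpha_1,\alpha_2\}$: you propose balancing identities (\ref{1.13}) and separate Pohozaev identities at $\pm\underline e$. None of that is needed here (the identities (\ref{1.13}) are derived only later, in Section~5). The inequality $4m<2(1+\alpha_1+\alpha_2)$ just obtained gives $2m\le\alpha_1+\alpha_2$; since $|\alpha_1-\alpha_2|\le 1$ this is exactly $m\le\min\{\alpha_1,\alpha_2\}$.
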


\begin{proof}
At first, we point out a fact which will be frequently used in the sequel. It was used first in \cite{bt1} and more recently in \cite{ttt} to carry out the blow-up analysis of cosmic strings. It gives us a criterion to recognise when "concentration" is bound to occur. We claim that, if $\sigma_u\geq2\alpha_1+2\alpha_2+2$, then $\bar u_t$ must concentrate, i.e.,
$$\bar u_t\rightarrow-\infty\ \text{uniformly in}\ C_{\mathrm{loc}}^0 (B_1 \backslash \{ 0 \}).$$
We prove it by contradiction and suppose that $\bar u_t$ is uniformly bounded in $L^{\infty}(\partial B_r(0)),$ for some $r \in (0,1)$. Let $z_t$ be the solution of
\begin{align}
\label{2.13}
\left\{\begin{array}{l}
\Delta z_t+h_1(x)|x-t\underline{e}|^{2\alpha_1}|x+t\underline{e}|^{2\alpha_2}e^{\bar u_t}=0~\mathrm{in}~B_r(0),\\
z_t=-|\bar u_t|~\mathrm{on}~\partial B_r(0).
\end{array}\right.
\end{align}
By the maximum principle, $u_t\geq z_t$ in $B_r(0)$, and in particular,
\begin{equation}
\label{2.14}
\begin{aligned}
&\int_{B_r(0)}h_1(x)|x-t\underline{e}|^{2\alpha_1}|x+t\underline{e}|^{2\alpha_2}e^{z_t}\\
&\leq\int_{B_r(0)} h_1(x)|x-t\underline{e}|^{2\alpha_1}|x+t\underline{e}|^{2\alpha_2}e^{\bar u_t}< C.
\end{aligned}
\end{equation}
On the other hand $z_t\rightarrow z$ uniformly on compact subsets of $B_r(0)\setminus\{0\}$, and $z$ satisfies
\begin{align}
\label{2.15}
-\Delta z=\mu~\mathrm{in}~B_r(0),\quad z \in L^{\infty}(\partial B_r(0)).
\end{align}
in the sense of distribution, with the measure $\mu$ such that, $\mu\geq 2\pi\sigma_u\delta_0$. Therefore
$$z(x)\geq\sigma_u\log\frac{1}{|x|}+O(1),~\mathrm{as}~x\rightarrow 0.$$
As a consequence, $e^z\geq\frac{C}{|x|^{\sigma_u}}$ and since (by the assumption) $\sigma_u\geq2(\alpha_1+\alpha_2+1)$ we get a contradiction to (\ref{2.14}).
Thus, the claim is proved.

Let us go back to the study of $v_t$ (in (\ref{eq404})). From the classical work \cite{bm} by Brezis and Merle, we have three possibilities for the behavior of $v_t:$
\begin{enumerate}
  \item $v_t$ is bounded in $L_{\mathrm{loc}}^{\infty}(\mathbb{R}^2),$
  \item $v_t\to-\infty$ uniformly on compact subsets of $\mathbb{R}^2$,
  \item there exists a finite set $S_v=\{p_1,p_2,\cdots,p_m\}\subset\mathbb{R}^2$ such that $v_t$ blows up at $p_j,~j=1,\cdots,m$, $v_t\rightarrow-\infty$ in $L^{\infty}_{\mathrm{loc}}(\mathbb{R}^2\setminus\{p_1,p_2,\cdots,p_m\})$ and
  $$h_1(ty)|y-\underline{e}|^{2\alpha_1}|y+\underline{e}|^{2\alpha_2}e^{v}\rightharpoonup 8\pi\sum_{j=1}^m\beta_j\delta_{p_j},$$
  where $\beta_j=1$ if $p_j\notin\{\underline{e},-\underline{e}\}$, $\beta_j=1+\alpha_1$ if $p_j=\underline{e}$ and $\beta_j=1+\alpha_2$ if $p_j=-\underline{e}.$
\end{enumerate}
We shall rule out first that (1) and (2) can occur.
\medskip

Indeed, concerning (1) we see that in case $v_t$ is locally bounded in $L_{\mathrm{loc}}^\infty(\mathbb{R}^2),$ then (along a sequence) we can get that $v_t$ converges to $v_0$ locally in $\mathbb{R}^2,$ with $v_0$  satisfying:
\begin{equation}
\label{2.16}
\Delta v_0+ h_1(0)|y-\underline{e}|^{2\alpha_1}|y+\underline{e}|^{2\alpha_2}e^{v_0}=0,
\end{equation}
where $h_1(0) > 0$ and
$$\int_{\mathbb{R}^2}|y-\underline{e}|^{2\alpha_1}|y+\underline{e}|^{2\alpha_2}e^{v_0}<+\infty.$$
From Theorem 4.A, we derive that
\begin{equation*}
\int_{\mathbb{R}^2}h(0)|y-\underline{e}|^{2\alpha_1}|y+\underline{e}|^{2\alpha_2}e^{v_0}=4\pi(\alpha_1+\alpha_2+\frac{\theta}{2})
\end{equation*}
for some positive integer $\theta>2$ and $\alpha_1+\alpha_2+\frac{\theta}{2}\in2\mathbb{N}$. As a consequence, we get that $m_v=2\alpha_1+2\alpha_2+\theta$. Using (\ref{2.7}) and the simple fact $\sigma_u\geq m_v$, we deduce that necessarily $$\sigma_u=m_v=2\alpha_1+2\alpha_2+\theta.$$ In addition, by virtue of the above claim we can also get that $\bar u_t$ concentrates. So the mass of $u^*_t$ must concentrate and $\rho=2\pi\sigma_u= 2\pi(2\alpha_1+2\alpha_2+\theta)\in 8\pi\mathbb{N},$ which contradicts our assumption on $\rho$. Therefore, (1) is ruled out.
\medskip

If $v_t\to-\infty$, then $m_v=0$, and by (\ref{2.7}), we obtain $\sigma_u=4\alpha_1+4\alpha_2+4$. Obviously, $\sigma_u>2\alpha_1+2\alpha_2+2$, and $\bar u_t$ concentrates. Again we derive that $\rho\in 8\pi\mathbb{N}$ and this is a contradiction. Thus, also alternative (2) is ruled out.

In conclusion we see that $v_t$ blows up at finite points $S_v=\{p_1,\cdots,p_m\},$ and we shall check that $S_v\cap\{\underline{e},-\underline{e}\}=\emptyset.$ Indeed, if $\underline{e}\in S_v$ or $-\underline{e}\in S_v$, then by virtue of (\ref{eq402}) we find that,
$$m_v\geq 4+4\min\{\alpha_1,\alpha_2\}\geq2+2\alpha_1+2\alpha_2,$$
As a consequence, by using (\ref{2.7}), we get $$\sigma_u=m_v\geq2+2\alpha_1+2\alpha_2,$$ which implies that $\bar u_t$ concentrates and $\rho\in 8\pi\mathbb{N}$, a contradiction. Hence
$$S_v\cap\{\underline{e},-\underline{e}\}=\emptyset.$$
Next we check that $\sigma_u = m_v$ and $1 \leq m \leq \min\{ \alpha_1,\alpha_2 \}$. Indeed, we know that $m_v=4m$. If (by contradiction) we assume that $\sigma_u > m_v$ then by (\ref{2.7}), we derive that,
$$4m <\sigma_u=4(\alpha_1+\alpha_2+1)-m_v=4(\alpha_1+\alpha_2-m+1),$$
As a consequence, $2m<\alpha_1+\alpha_2+1$ which implies that $\sigma_u>2(\alpha_1 +\alpha_2 +1)$. So concentration must occur and again we find that $\rho \in 8\pi\mathbb{N}$, which is impossible. In conclusion $\sigma_u = m_v = 4m$ and concentration {\em cannot} occur. So necessarily
$4m =\sigma_u<2(\alpha_1 + \alpha_2 +1$),\ which in view of (\ref{eq402}) gives,\ $1 \leq m \leq \min \{ \alpha_1,\alpha_2 \}$\ as claimed.
\end{proof}

\section{The proof of Theorem \ref{th1.3} and Theorem \ref{th1.4}}
From the discussion of the previous two sections, we have obtained a description of $\bar u_t$ away from the origin and showed that the scaled function $v_t$ must blow up. In this section, we shall study the behavior of $v_t$ around each blow up point. More precisely, we shall derive a relation between $t$ and the maximal value of $v_t.$ From Proposition \ref{pr4.2} we know that $v_t$ blows up at $m$ points $\{p_1,p_2,\cdots,p_m\}$ different from $\pm \underline{e},$ and $\sigma_u=m_v=4m$, with
$$m\in\{1, \cdots,\min\{\alpha_1,\alpha_2\}\}.$$
We recall that $r_0$ and $R$ are two positive fixed constants such that $B_{4r_0}(p_i)\cap B_{4r_0}(p_j)=\emptyset,i\neq j$ and $\bigcup_{i=1}^mB_{4r_0}(p_{i})\subset B_{R}(0)$, and we have set
\begin{equation}
\label{3.1}
\lambda_{t,i}=\max_{B_{r_0}(p_i)}v_t(y)=v_t(p_{t,i})\quad\mathrm{and}\quad\lambda_t=\max_{i}\lambda_{t,i}.
\end{equation}
In the following proposition we shall derive a first rough estimation between $\lambda_t$ and $t$.

\begin{proposition}
\label{pr3.1}
Let $\lambda_t$ be defined above, then the following holds:
\begin{align}
\label{3.2}
\lambda_t+(2+2\alpha_1+2\alpha_2-4m)\log t=O(1),\ \text{as}\ t \rightarrow 0.
\end{align}
\end{proposition}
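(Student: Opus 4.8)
The plan is to obtain the estimate \eqref{3.2} by combining the local Pohozaev/Brezis--Merle analysis around each blow-up point $p_i$ of $v_t$ with a "Harnack chain" comparison between the bubbling scale at $p_i$ and the outer boundary $\partial B_1(0)$ where $\bar u_t$ is uniformly controlled. The key point is that the total mass $\sigma_u = m_v = 4m$ is already fixed by Proposition \ref{pr4.2}, so the only unknown is how the maximum $\lambda_t$ of $v_t$ relates to $t$; this relation is forced by the requirement that the local bubble at $p_i$ carries mass exactly $8\pi$ and that gluing these $m$ bubbles to the background solution $w$ reproduces the prescribed mass on $M$.

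First I would work near a fixed $p_i$ and apply the standard local analysis of \cite{l},\cite{cl1} (the "second re-scaling" mentioned right before the statement) to the equation \eqref{eq404} for $v_t$. Since $v_t$ blows up at $p_i \notin \{\underline e, -\underline e\}$ with local mass $8\pi$, after rescaling $v_t$ around its local maximum point $p_{t,i}$ at scale $e^{-\lambda_{t,i}/2}$ one gets convergence to the standard Liouville bubble $U(y) = \log \frac{8\delta^2}{(\delta^2+|y|^2)^2}$, and the coefficient $h_1(ty)|y-\underline e|^{2\alpha_1}|y+\underline e|^{2\alpha_2}$ converges locally to the positive constant $h_1(0)|p_i-\underline e|^{2\alpha_1}|p_i+\underline e|^{2\alpha_2}$ (here $ty \to 0$). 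This yields a pointwise lower bound $v_t(y) \ge \lambda_{t,i} - 4\log(1 + e^{\lambda_{t,i}/2}|y-p_{t,i}|) - C$ near $p_i$, and in particular, integrating, $\int_{B_{r_0}(p_i)} h_1(ty)\cdots e^{v_t}\,dy \to 8\pi$, consistent with $m_v = 4m$.

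Next I would translate this back to $\bar u_t$. Recall $v_t(y) = \bar u_t(ty) + 2(1+\alpha_1+\alpha_2)\log t$, so $\bar u_t$ has $m$ blow-up points $y_{i,t} = tp_{t,i} \to 0$ in $B_1(0)$, all collapsing to the origin, with total local mass $2\pi\sigma_u = 8\pi m$. The function $\bar u_t$ is bounded on $\partial B_1(0)$ by \eqref{eq401}, and by the potential representation (Green's function on $B_1$) together with $-\Delta \bar u_t = h_{1,t} e^{\bar u_t} \rightharpoonup 8\pi m\,\delta_0$ we get $\bar u_t(x) = -4m \log\frac{1}{|x|} + O(1)$ uniformly on annuli $\{r_1 \le |x| \le r_2\}$ away from $0$ — i.e.\ $\nabla \bar u_t \approx -\sigma_u x/|x|^2$, which is exactly \eqref{2.8}. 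Evaluating this behaviour at the scale $|x| \sim t$, namely comparing the value of $\bar u_t$ at a point $\sim tp_i$ predicted by the outer expansion, $\bar u_t(tp_i) \approx -4m\log\frac1t + O(1)$, with the inner expansion coming from the bubble, $\bar u_t(tp_{t,i}) = v_t(p_{t,i}) - 2(1+\alpha_1+\alpha_2)\log t = \lambda_{t,i} - 2(1+\alpha_1+\alpha_2)\log t$, gives
\[
\lambda_{t,i} - 2(1+\alpha_1+\alpha_2)\log t = -4m\log\tfrac1t + O(1) = 4m\log t + O(1),
\]
that is $\lambda_{t,i} + (2 + 2\alpha_1 + 2\alpha_2 - 4m)\log t = O(1)$; taking the max over $i$ yields \eqref{3.2}.

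The main obstacle is making the matching of the inner (bubble) and outer (singular background) expansions rigorous and uniform: one must control $\bar u_t$ on the intermediate region $t \ll |x| \ll 1$ and show that both the $-4m\log\frac1{|x|}$ asymptotics from the outer side and the bubble asymptotics from the inner side are valid with an $O(1)$ error that is uniform in $t$. This requires a careful Harnack-chain argument (of Brezis--Merle type) in the neck regions between consecutive scales, using that there are no blow-up points of $v_t$ outside $B_R(0)$ and no blow-up points of $\bar u_t$ away from $0$, so that $v_t + m_v\log|y|$ and $\bar u_t + \sigma_u\log|x|$ stay bounded in the respective neck annuli. Once this uniform control on the necks is in place, the estimate \eqref{3.2} follows by simply reading off the leading logarithmic term at the matching scale $|x| \sim t$; note this is only a \emph{rough} estimate (error $O(1)$ rather than the sharp $O(t\log t)$ of Theorem \ref{th1.4}), so at this stage no non-degeneracy of $w$ is needed and the argument can stay at the level of mass-balance plus Harnack inequalities.
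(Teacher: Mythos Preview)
Your overall strategy---match the inner bubble profile of $v_t$ against the outer Green/potential expansion of $\bar u_t$, with Harnack/Brezis--Merle control in the neck---is exactly the paper's approach. However, the displayed matching step is carried out at the wrong point and the algebra does not give the conclusion you claim.

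You equate the outer value $\bar u_t(tp_i)\approx 4m\log t+O(1)$ (which tends to $-\infty$) with the inner value at the \emph{peak}, $\bar u_t(tp_{t,i})=\lambda_{t,i}-2(1+\alpha_1+\alpha_2)\log t$ (which tends to $+\infty$). These two cannot be equal, and in fact your equation $\lambda_{t,i}-2(1+\alpha_1+\alpha_2)\log t=4m\log t+O(1)$ rearranges to $\lambda_{t,i}=(2+2\alpha_1+2\alpha_2+4m)\log t+O(1)\to-\infty$, not to \eqref{3.2}. The outer expansion $\bar u_t(x)=-4m\log\frac{1}{|x|}+O(1)$ is by its nature invalid at the maxima $tp_{t,i}$; it only makes sense once the bubbles have decayed.

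The correct matching point is on $\partial B_{Rt}(0)$ (i.e.\ on $\partial B_R$ in the $v$-scale), which lies at distance $O(1)$ from every $p_{t,i}$ and hence outside all the bubbles. There Li's estimate \eqref{3.4} together with the bounded-oscillation property \eqref{3.3} gives $v_t=-\lambda_t+O(1)$, so
\[
\bar u_t\big|_{\partial B_{Rt}}=-\lambda_t-(2+2\alpha_1+2\alpha_2)\log t+O(1),
\]
and it is this quantity that you compare with the Green's representation of $\bar u_t$ on $\partial B_{Rt}$. The paper then shows that the latter equals $-4m\log t+O(1)$; note this requires a bootstrap: a first pass only yields $-4m\log t+o_r(1)|\log t|$, and one must prove $\bar u_t+(4m+1)\log|x|\le C$ on the neck $B_r\setminus B_{Rt}$ (a Brezis--Merle type argument, exactly what you flag as the ``main obstacle'') before the error becomes $O(1)$. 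Equating $-\lambda_t-(2+2\alpha_1+2\alpha_2)\log t=-4m\log t+O(1)$ now gives \eqref{3.2} with the correct sign.
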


\begin{proof} We observe that $v_t$ admits bounded oscillation in $B_{2R}(0)\setminus\bigcup_{i=1}^m B_{r_0}(p_i),$ in the sense that,
\begin{equation}
\label{3.3}
|v_t(y_1)-v_t(y_2)|\leq C,~\forall y_1,y_2\in B_{R}(0)\setminus\bigcup_{i=1}^m B_{r_0}(p_i),
\end{equation}
which we may check as in \cite{bclt}. As a consequence we can use the pointwise estimate established in \cite[Theorem 1.1]{l}, to obtain that,
\begin{equation}
\label{3.4}
\left|v_t(y)-\log\frac{e^{\lambda_{t,i}}}{(1+\frac{h_1(tp_{t,i})}{8}e^{\lambda_{t,i}}
|p_{t,i}-\underline{e}|^{2\alpha_1}|p_{t,i}+\underline{e}|^{2\alpha_2}|y-p_{t,i}|^2)^2}\right|\leq C
\end{equation}
for $|y-p_{t,i}|\leq r_0$. From (\ref{3.4}), we deduce that $\lambda_t=\lambda_{t,i}+O(1)$ and $v_t=-\lambda_t+O(1)$ for $y\in \partial B_R(0).$ Therefore, for $x\in\partial B_{Rt}(0)$ we have
\begin{align}
\label{3.5}
\bar u_t=-\lambda_t-(2+2\alpha_1+2\alpha_2)\log t+O(1).
\end{align}
On the other hand, by using the Green's representation formula for $\bar u_t$ in $B_1(0)$, for $x\in\partial B_{Rt}(0)$ we have:
\begin{align}
\label{3.6}
\bar u_t(x) = -\frac{1}{2\pi} \int_{B_1(0)}\text{log}\ |x-z| h_{1,t}(z) e^{\bar u_t(z)} + O(1).
\end{align}
For $t \rightarrow 0$, we decompose:
\begin{align*}
B_1(0)=B_{2Rt}(0)\cup(B_{r}(0)\setminus B_{2Rt}(0))\cup( B_1(0)\setminus B_{r}(0))=I_1\cup I_2\cup I_3,
\end{align*}
with $r>0$ sufficiently small, so that
\begin{align}
\label{3.7}
\int_{B_{r}(0)}h_{1,t}(x) e^{\bar u_t(x)}dx=8m\pi+o_{r}(1),~\mbox{where}~o_r(1)\to0~\mbox{as}~r\to0.
\end{align}
For $z\in I_3$, we have $\bar u_t(z)\leq C_{r}$, and so,
\begin{equation}
\label{3.8}
\left|\int_{I_3}-\frac{1}{2\pi}\log|x-z| h_{1,t}e^{\bar u_t(z)}\right|\leq C_{r}.
\end{equation}
When $z\in I_2$, and $x\in \partial B_{Rt}(0)$, we have $|x-z|\geq O(|z|)$, and so,
\begin{equation}
\label{3.9}
\left|\int_{I_2}\frac{1}{2\pi}\log|x-z|h_{1,t}e^{\bar u_t(z)} \right| \leq o_{r}(1)(\log \frac{1}{t}).
\end{equation}
For the integration over $I_1,$ we can use (\ref{3.4}) and after scaling, obtain (as in the step 2 in the proof of \cite[Theorem 1.1]{bclt}) that,
\begin{equation}
\label{3.10}
\int_{B_{r_0t}(tp_{t,i})} h_{1,t}e^{\bar u_t(z)}=8\pi\left(1+O\left(\lambda_t^{-1}\right)\right),
\end{equation}
which together with (\ref{3.3}) gives that
\begin{equation}
\label{3.m}
\int_{I_1}h_{1,t}e^{\bar u_t(z)}=8m\pi\left(1+O\left(\lambda_t^{-1}\right)\right).
\end{equation}
Therefore by letting $x=Rte'\in \partial B_{Rt}(0)$ with $e'\in\mathbb{S}^1$, we  get,
\begin{equation}
\label{3.11}
\begin{aligned}
&~\int_{B_{2Rt}(0)}\frac{1}{2\pi}\log|x-z|h_{1,t}e^{\bar u_t(z)}\mathrm{d}z\\
=&~\frac{1}{2\pi}\log t\int_{B_{2R}(0)}h_1(ty)|y-\underline{e}|^{2\alpha_1}|y+\underline{e}|^{2\alpha_2}e^{v_t}\mathrm{d}y\\
&~+\frac{1}{2\pi}\int_{B_{2R}(0)}\log|Re'-z|h_1(ty)|y-\underline{e}|^{2\alpha_1}|y+\underline{e}|^{2\alpha_2}e^{v_t}\mathrm{d}y\\
=&~4m\log t\left(1+O\left(\lambda_t^{-1}\right)\right)+O(1).
\end{aligned}
\end{equation}
Thus by using (\ref{3.5})-(\ref{3.11}), we have
\begin{align}
\label{3.12}
-\lambda_t-(2+2\alpha_1+2\alpha_2)\log t \leq -4m\log t\left(1+O\left(\lambda_t^{-1}\right)\right)+o_{r}(1)|\log t|+C_{r}.
\end{align}
Next we shall improve the estimate (\ref{3.12}). To this purpose, we prove first that,
\begin{align*}
\bar u_t+(4m+1)\log|x|\leq C~\mathrm{in}~B_{r}(0)\setminus B_{Rt}(0).
\end{align*}
Let
$$u_{t,1}(x)=\bar u_t(x)+(4m+1)\log|x|,$$
then we must show that: $u_{t,1}\leq C$ in $B_{r}(0)\setminus B_{Rt}(0)$. From the definition, we see that $u_{t,1}$ satisfies:
\begin{align}
\label{3.13}
\Delta u_{t,1}+ h_{1,t}(x)e^{\bar u_t}(x)=0,\ \text{for}\ x \in B_r(0) \backslash B_{Rt}(0).
\end{align}
We can choose $r>0$ sufficiently small, so that:
$$\int_{B_{r}(0)\setminus B_{Rt}(0)}\left| h_{1,t}e^{\bar u_t} \right|\leq \frac12\pi.$$
Since for $x\in\partial\left(B_{r}(0)\setminus B_{2Rt}(0)\right)$ we can use (\ref{3.12}) with $m \in \mathbb{N}$ and $1 \leq m\leq\min\{\alpha_1,\alpha_2\}$ to get
\begin{align}
\label{3.14}
u_{t,1}(x)=\bar u_t(x)+(4m+1)\log|x|\leq C.
\end{align}
At this point, we can argue as in \cite[Corollary 4]{bm} to obtain that $u_{t,1}\leq C$ in $I_2.$ By applying this inequality together with the following information:
$$(\log|x-z|)h_{1,t}(z) = O((\log|z|)|z|^{2(\alpha_1 +\alpha_2)}),\ \forall\ z \in I_2,~\forall x\in\partial B_{Rt}(0),$$
we can further get that,
\begin{align}
\label{3.15}
\int_{I_2}\frac{1}{2\pi}\log|x-z|h_{1,t}e^{u_t}\mathrm{d}z=O(1).
\end{align}
As a consequence of (\ref{3.11}) and (\ref{3.15}), we have
\begin{align}
\label{3.16}
-\lambda_t-(2+2\alpha_1+2\alpha_2)\log t=-4m\log t(1+O(\lambda_t^{-1}))+O(1),
\end{align}
which gives,
\begin{align}
\label{3.17}
\lambda_t+(2+2\alpha_1+2\alpha_2-4m)\log t=O(1),
\end{align}
as claimed.
\end{proof}

\noindent {\em Proof of Theorem \ref{th1.3}.} The first conclusion is already proved in Proposition \ref{pr3.1}. Using (\ref{3.2}) and repeating the arguments which are used for proving $u_{t,1}$ is bounded from above, we can further show that
\begin{align*}
\bar u_t+4m\log|x|\leq C~\mathrm{in}~B_{r}(0)\setminus\bigcup_{i=1}^m B_{r_0t}(tp_{t,i}).
\end{align*}
In addition, by combining the above inequality and Theorem \ref{th1.1}, we can obtain
\begin{align}
\label{3.18}
u_t-G_t^{(2)}\leq C~\mathrm{in}~M\setminus B_{\lambda t}(\mathfrak{q})~\mbox{for}~\lambda~\mbox{sufficiently large}.
\end{align}
At this point the conclusion (ii) of Theorem \ref{th1.3} obviously holds. \hfill $\square$
\medskip

To proceed further, we recall that $y_{i,t}$ denotes the point in $M$ which corresponds to $tp_{t,i} \in B_1(0)$ and $U_{i,t}^{r_0}$ is the neighborhood of $y_{i,t}$ in $M$ corresponding to the ball $B_{tr_0}(t p_{t,i})$ under the isothermal coordinates around $\mathfrak{q}$ (centered at the origin). So $y_{i,t} \rightarrow \mathfrak{q}$ as $t \rightarrow 0$. Furthermore, we have set,
\begin{equation}
\label{eq520}
\rho_{t,i} = \int_{B_{r_0 t}(t p_{t,i})}h_{1,t}(x) e^{\bar u_t}\mathrm{d}x = \frac{\int_{U_{i,t}^{r_0}}\rho h e^{u_t-G_t^{(2)}}\mathrm{d}\sigma}{\int_{M}h e^{u_t-G_t^{(2)}}\mathrm{d}\sigma}.
\end{equation}
From Theorem \ref{th1.1} we have,
\begin{equation}
\label{4-eq-con}
u_t\to w+8m\pi G(x,0),
\end{equation}
with $w \in \mathring{H}^1(M)$ satisfying:
\begin{align}
\label{s.2}
\Delta w+(\rho-8m\pi)\left(\frac{he^{w+(8m-4\alpha_1-4\alpha_2)\pi G(x,0)}}{\int_Mhe^{w+(8m-4\alpha_1-4\alpha_2)\pi G(x,0)}}-1\right)=0.
\end{align}
To get a sharp estimate on $\rho_{t,i}-8\pi$, it is necessary to analyze the term:
\begin{align}
\label{s.3}
\tilde{\phi}_t=u_t-w-\sum_{i=1}^m\rho_{t,i}G(x,y_{i,t}).
\end{align}

\begin{proposition}
\label{pr3.2}
Suppose $w$ is a non-degenerate solution of (\ref{s.2}). Let $\rho_{t,i}$ and $\tilde{\phi}_t$ be defined in (\ref{eq520}) and (\ref{s.3}). Then
\begin{enumerate}
  \item [(i).] $\rho_{t,i}-8\pi=O(t^2\log t),$
  \item [(ii).] $\|\tilde\phi_t\|_{C^1(M\setminus\bigcup_{i=1}^m U_{i,t}^{2r_0})}= O(t\log t).$
\end{enumerate}
\end{proposition}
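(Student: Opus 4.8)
The plan is to study the function $\tilde\phi_t=u_t-w-\sum_{i=1}^m\rho_{t,i}G(\cdot,y_{i,t})$ through the linear equation it satisfies, to invert the limiting linearized operator by means of the non-degeneracy of $w$, and then to read off the sharp rate in (i) from a localized Pohozaev identity. Subtracting (\ref{s.2}) and the equations $\Delta G(\cdot,y_{i,t})=1-\delta_{y_{i,t}}$ ($1\le i\le m$) from (\ref{equation-ut}) gives
\begin{equation*}
-\Delta\tilde\phi_t=\rho\frac{he^{u_t-G_t^{(2)}}}{\int_Mhe^{u_t-G_t^{(2)}}}-(\rho-8m\pi)\frac{he^{w_m}}{\int_Mhe^{w_m}}+\Big(\sum_{i=1}^m\rho_{t,i}-8m\pi\Big)-\sum_{i=1}^m\rho_{t,i}\delta_{y_{i,t}}.
\end{equation*}
Two structural facts make this tractable: the constant $\sum_i\rho_{t,i}-8m\pi$ is annihilated by the Green representation since $\int_MG(x,\cdot)=0$; and, by Theorem \ref{th1.1}, $\int_Mhe^{u_t-G_t^{(2)}}\to\frac{\rho}{\rho-8m\pi}\int_Mhe^{w_m}$, so the two density terms have matching coefficients in the limit and very nearly cancel away from $\mathfrak{q}$, where moreover $\sum_i\rho_{t,i}G(\cdot,y_{i,t})$ matches $8m\pi G(\cdot,\mathfrak{q})$ up to $O(t)$ because $|y_{i,t}-\mathfrak{q}|=O(t)$.

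Next I would represent $\tilde\phi_t$ by the Green's function and split $M$. On $M\setminus B_\delta(\mathfrak{q})$ the $C^2_{\mathrm{loc}}$-convergence $u_t\to w+8m\pi G(\cdot,\mathfrak{q})$ of Theorem \ref{th1.1}, together with $|y_{i,t}-\mathfrak{q}|=O(t)$, lets one Taylor-expand the right-hand side and bound its contribution by $O(t)$; on $B_\delta(\mathfrak{q})\setminus\bigcup_iU_{i,t}^{2r_0}$ one uses the pointwise estimate (\ref{3.4}) and the second re-scaling of \cite{l,cl1} to see that the concentrating density is matched by $\sum_i\rho_{t,i}\delta_{y_{i,t}}$ up to bubble ``tails'' whose convolution with $\log|x-z|$ is $O(t\log t)$. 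Finally, linearizing $e^{u_t-G_t^{(2)}}$ about $e^{w_m+\sum_i\rho_{t,i}G(\cdot,y_{i,t})}$ feeds a term $\sim\frac{(\rho-8m\pi)he^{w_m}}{\int_Mhe^{w_m}}\tilde\phi_t$ back into the right-hand side. The outcome is an identity $\tilde\phi_t=\mathcal L_t[\tilde\phi_t]+E_t$ on $M\setminus\bigcup_iU_{i,t}^{2r_0}$, with $\mathcal L_t\to\mathcal L_0$, where $\mathcal L_0\phi(x)=\int_MG(x,z)(\rho-8m\pi)\frac{he^{w_m}(z)}{\int_Mhe^{w_m}}\big(\phi(z)-\tfrac{\int_Mhe^{w_m}\phi}{\int_Mhe^{w_m}}\big)\,dz$ is the operator behind the linearization (\ref{1.6}), and $\|E_t\|_{C^1}=O\big(t\log t+\sup_i|\rho_{t,i}-8\pi|\big)$. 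The non-degeneracy of $w$ is precisely the statement that $I-\mathcal L_0$ is invertible on the mean-zero space, hence $I-\mathcal L_t$ is invertible for small $t$ with uniformly bounded inverse; equivalently, one runs the usual normalization/contradiction argument — rescale $\tilde\phi_t$ by its norm, pass to the limit in the linear equation, and contradict non-degeneracy, the delicate point being to exclude concentration of the rescaled function at $\mathfrak{q}$, which is ruled out by the $C^1$ control of $v_t$ near the cores. Either way, $\|\tilde\phi_t\|_{C^1(M\setminus\bigcup_iU_{i,t}^{2r_0})}\le C\big(t\log t+\sup_i|\rho_{t,i}-8\pi|\big)$.

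For (i) I would apply the Pohozaev identity for $\Delta\bar u_t+h_{1,t}e^{\bar u_t}=0$ centered at $tp_{t,i}$ on $B_{r_0t}(tp_{t,i})$ (equivalently on $B_{r_0}(p_{t,i})$ for $v_t$). This expresses $\rho_{t,i}(\rho_{t,i}-8\pi)$ through the source integral $\int(x-tp_{t,i})\cdot\nabla\log h_{1,t}\,h_{1,t}e^{\bar u_t}$ and boundary integrals of quadratic expressions in $\nabla\bar u_t$ over $\partial B_{r_0t}(tp_{t,i})$. Using (\ref{3.4}) together with the second re-scaling of \cite{l,cl1} — which shows $v_t$ coincides near $p_i$ with the standard bubble up to a correction whose odd part about $p_{t,i}$ is negligible — the leading source and boundary contributions cancel, leaving terms of order $t\,\|\tilde\phi_t\|_{C^1}$ (from $\nabla\tilde\phi_t$ on the boundary) and $t^2\log t$ (from $\int|x-tp_{t,i}|^2h_{1,t}e^{\bar u_t}$, the logarithm reflecting the log-divergence of $\int|z|^2(1+|z|^2)^{-2}\,dz$). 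Since $\rho_{t,i}\to8\pi$, this yields $\rho_{t,i}-8\pi=O\big(t\|\tilde\phi_t\|_{C^1}+t^2\log t\big)$. Combining with the previous paragraph, $\|\tilde\phi_t\|_{C^1}\le C\big(t\log t+t\|\tilde\phi_t\|_{C^1}+t^2\log t\big)$, so $\|\tilde\phi_t\|_{C^1}=O(t\log t)$ for $t$ small — which is (ii) — and then $\rho_{t,i}-8\pi=O(t\cdot t\log t+t^2\log t)=O(t^2\log t)$, which is (i).

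The main obstacle is the bookkeeping in the last step: showing that in the localized Pohozaev identity the $O(t)$ and $O(t\log t)$ contributions genuinely cancel so that only $O(t^2\log t)$ survives. This demands the second-re-scaling description of $v_t$ near each $p_i$, control of the mutual interaction of the $m$ bubbles and of their interaction with $w$ and with the weights $|y\mp\underline{e}|^{2\alpha_{1,2}}$, and uniformity in $t$ of every error term. A secondary difficulty is the routine but technical task of making the inversion rigorous, i.e., verifying that a normalized $\tilde\phi_t$ cannot concentrate at $\mathfrak{q}$ and hence must converge to a genuine solution of (\ref{1.6}).
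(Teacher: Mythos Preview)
Your proposal is correct and follows essentially the same three-ingredient bootstrap as the paper: a preliminary $o(1)$ bound on $\tilde\phi_t$ via Green's representation, a local identity giving $\rho_{t,i}-8\pi=O(t\|\tilde\phi_t\|_*+t^2\log t)$, and the non-degeneracy of $w$ to invert the linearized operator and close the loop.

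The implementation differs in two minor respects. First, for the estimate on $\rho_{t,i}-8\pi$ you invoke the radial Pohozaev identity on $B_{r_0}(p_{t,i})$, whereas the paper computes $\rho_{t,i}-8\pi$ directly by expanding $h_{1,t}(ty)e^{v_t}=8C_{t,i}e^{I_{t,i}}(1+H_{t,i})$ about the explicit bubble $I_{t,i}$ and integrating term by term (the translational Pohozaev identity is used separately to locate the $p_i$'s). Both routes lead to the same $O(t\|\tilde\phi_t\|_*+t^2\log t)$, and the $t^2\log t$ you anticipate from $\int|z|^2(1+|z|^2)^{-2}dz$ is exactly the mechanism in the paper. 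Second, for the inversion you suggest the normalization/contradiction argument; the paper instead constructs an explicit extension $\tilde\phi_t^{ext}$ across the small disks via a cutoff, so that the equation holds on all of $M$ and non-degeneracy applies directly through an $L^\delta$ estimate. The key technical input you allude to as ``the second re-scaling of \cite{l,cl1}'' is isolated in the paper as the pointwise bound $|\tilde\eta_{t,i}(z)|\le C_\e(t\|\tilde\phi_t\|_*+t^2)(1+|z|)^\e$ for the error between $v_t$ and the bubble $I_{t,i}$, proved separately in an appendix; this is precisely what makes the ``bookkeeping'' you flag go through.
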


\begin{proof}
We shall prove the given estimates in the following steps.
\medskip

\noindent Step 1. We define $\|\tilde{\phi}_t\|_*=\|\tilde{\phi}_t\|_{C^1(M\setminus\bigcup_{i=1}^m U_{i,t}^{2r_0})}$ and show it is small.
\medskip

In the proof of Theorem \ref{th1.1}, we have shown that
\begin{align}
\label{s.6}
\int_Mhe^{u_t-G_t^{(2)}}=\frac{\rho}{\rho-8m\pi}\int_Mhe^{w_m}+o(1),
\end{align}
with $o(1)\to0$ as $t\to0.$ By Green's representation formula, for $x\in M\setminus B_{2r_0t}(tp_{t,i})$ we have,
\begin{equation}
\label{s.7}
\begin{aligned}
u_t(x)=~&\int_M\rho G(x,z)\left(\frac{he^{u_t-G_t^{(2)}}}{\int_Mhe^{u_t-G_t^{(2)}}}-1\right)\\
=~&\left(\int_{M\setminus B_{r}(0)}+\int_{B_r(0)\setminus\bigcup_{i=1}^m U_{i,t}^{r_0}}+\sum_{i=1}^m\int_{U_{i,t}^{r_0}}\right)
G(x,z)\frac{\rho he^{u_t-G_t^{(2)}}}{\int_Mhe^{u_t-G_t^{(2)}}}\\
=~&\int_{M\setminus B_r(0)}G(x,z)\frac{(\rho-8m\pi)he^{w_m}}{\int_Mhe^{w_m}}+\sum_{i=1}^m\rho_{t,i}G(x,y_{i,t})\\
&+\int_{B_{r}(0)\setminus\bigcup_{i=1}^m U_{i,t}^{2r_0}}
G(x,z)\frac{he^{u_t-G_t^{(2)}}}{\int_Mhe^{u_t-G_t^{(2)}}}+o(1)\\
=~&w(x)+\sum_{i=1}^m\rho_{t,i}G(x,y_{i,t})+o(1),
\end{aligned}
\end{equation}
where we have used (\ref{3.4}) and Proposition \ref{pr3.1}. Similarly, we can get
$$\nabla u_t(x)=\nabla w(x)+\sum_{i=1}^m\rho_{t,i}\nabla G(x,y_{i,t})+O(1).$$
Then $\|\t\phi_t\|_*=O(1).$ Next, we shall improve the estimate for $\nabla\t\phi_t.$ To this purpose, we have to get a better description on $v_t.$ Recall that $v_t$ satisfies
\begin{equation}
\label{s.9}
\Delta v_t+h_{1,t}(ty)e^{v_t(y)}=0,
\end{equation}
where
\begin{equation*}
h_{1,t}(ty)=\rho h(ty)|y-\underline{e}|^{2\alpha_1}|y+\underline{e}|^{2\alpha_2}e^{-R_t(y)+\psi_t(y)},
\end{equation*}
and $\psi_t(y)=\psi(ty)$ with $\psi(x)$ given in (\ref{def-psi}). Notice that $\|\psi_{t}\|_{C^1(B_{R}(0))}\leq Ct^2.$

On $\partial B_{2r_0}(p_{t,i})$, we have:
\begin{equation}
\label{s.10}
\begin{aligned}
v_{t}(y)=~&-\sum_{j=1}^m\frac{\rho_{t,j}}{2\pi}\log|y-p_{t,j}|
+\left(2+2\alpha_1+2\alpha_2-\sum_{j=1}^m\frac{\rho_{t,j}}{2\pi}\right)\log t+w(ty)\\
&+\sum_{j=1}^m\rho_{t,j}R(ty,tp_{t,j})-\log\int_Mhe^{u_t-G_t^{(2)}}+\tilde\phi_t(ty)-\psi_{t}(y)\\
=~&-\frac{\rho_{t,i}}{2\pi}\log|y-p_{t,i}|+\left(2+2\alpha_1+2\alpha_2-\frac{\rho_{t,i}}{2\pi}\right)\log t\\
&+G_{t,i}(ty)-\log\int_Mhe^{u_t-G_t^{(2)}}+\tilde\phi_t(ty)-\psi_{t}(y),
\end{aligned}
\end{equation}
where
$$G_{t,i}(ty)=\rho_{t,i}R(ty,y_{i,t})+\sum_{j\neq i}\rho_{t,j}G(ty,y_{j,t})+w(ty).$$
For any unit vector $\xi\in\mathbb{R}^2,$ we apply the Pohozaev identity to (\ref{s.9}) and obtain
\begin{equation}
\label{s.16}
\begin{aligned}
\int_{B_{2r_0}(p_{t,i})}(\xi\cdot\nabla h_{1,t})e^{v_{t}(y)}
=&\int_{\p B_{2r_0}(p_{t,i})}\left[(\nu\cdot\nabla v_{t})(\xi\cdot\nabla v_{t})-\frac12(\nu\cdot\xi)|\nabla v_{t}|^2\right]\\
&+\int_{\p B_{2r_0}(p_{t,i})}(\nu\cdot\xi)h_{1,t}e^{v_{t}},
\end{aligned}
\end{equation}
where $\nu$ denotes the unit normal of $\partial B_{2r_0}(p_{t,i})$. For the right hand side of (\ref{s.16}), we can use (\ref{s.10}), to find:
\begin{equation}
\label{s.17}
\begin{aligned}
&\int_{\p B_{2r_0}(p_{t,i})}\Big[(\nu\cdot\nabla v_{t})(\xi\cdot\nabla v_{t})-\frac12(\nu\cdot\xi)|\nabla v_{t}|^2\Big]\\
=&-t\rho_{t,i}\xi\cdot\nabla_xG_{t,i}(tp_{t,i})+O(t\|\tilde\phi_t\|_{*}+t^2).
\end{aligned}
\end{equation}
While, for the left hand side of (\ref{s.16}), we get
\begin{equation}
\label{s.18}
\begin{aligned}
&\int_{B_{2r_0}(p_{t,i})}\left(\xi\cdot\nabla h_{1,t}(ty)\right)e^{v_{t}(y)}\mathrm{d}y\\
&=\int_{B_{r_0}(p_{t,i})}\left(\xi\cdot\frac{\nabla h_{1,t}(ty)}{h_{1}(ty)}\right)h_{1}(ty)e^{v_{t}(y)}+O(t^2)\\
&=\int_{B_{r_0}(p_{t,i})}\xi\cdot(\frac{\nabla h_{1,t}(ty)}{h_{1,t}(ty)}-\frac{\nabla h_{1,t}(tp_{t,i})}{h_{1,t}(tp_{t,i})})h_{1,t}(ty)e^{v_{t}(y)}\\
&\quad+\int_{B_{r_0}(p_{t,i})}\left(\xi\cdot\frac{\nabla h_{1,t}(tp_{t,i})}{h_{1,t}(tp_{t,i})}\right) h_{1,t}(ty)e^{v_{t}(y)}+O(t^2)\\
&=\rho_{t,i}\left(\xi\cdot\frac{\nabla h_{1,t}(tp_{t,i})}{h_{1,t}(tp_{t,i})}\right)+O(t),
\end{aligned}
\end{equation}
where we used the Taylor expansion of $\log\frac{h_{1,t}(tx)}{h_{1,t}(tp_{t,i})}$. Thus, for any unit vector $\xi$, we have
\begin{align}
\label{s.19}
\xi\cdot\left(\nabla_y\log h_{1,t}(tp_{t,i})+t\nabla_xG_{t,i}(tp_{t,i})\right)=O(t+t\|\t\phi\|_*).
\end{align}
Using the expression of $h_{1,t}$, from (\ref{s.19}) we can obtain
\begin{equation}
\label{s.20n}
\begin{aligned}
&t\frac{\nabla_xh(tp_{t,i})}{h(tp_{t,i})}+2\alpha_1\frac{p_{t,i}-\underline{e}}{|p_{t,i}-\underline{e}|^2}
+2\alpha_2\frac{p_{t,i}+\underline{e}}{|p_{t,i}+\underline{e}|^2}-\sum_{j\neq i}\frac{\rho_{t,j}}{2\pi}\frac{p_{t,i}-p_{t,j}}{|p_{t,i}-p_{t,j}|^2}\\
=~&t\nabla_xR_t(tp_{t,i})-t\sum_{j=1}^m\rho_{t,j}\nabla_xR(tp_{t,j},tp_{t,i})-t\nabla w(tp_{t,i})+O(t+t\|\t\phi\|_*).
\end{aligned}
\end{equation}
In order to analyze $v_{t}$ in $B_{2r_0}(p_{t,i})$, we set
\begin{align}
\label{s.11}
I_{t,i}(y)=\log\frac{e^{\lambda_{t,i}}}{(1+C_{t,i}e^{\lambda_{t,i}}|y-q_{t,i}|^2)^2},
\end{align}
where $\lambda_{t,i}=v_t(p_{t,i})$, $C_{t,i}=\frac{h_{1,t}(tp_{t,i})}{8}$ and $q_{t,i}$ is chosen such that $|q_{t,i}-p_{t,i}|\ll1$ and
\begin{align*}
\nabla_yI_{t,i}(p_{t,i})=-t\rho_{t,i}\nabla_xR(y_{i,t},y_{i,t})-t\sum_{j\neq i}\rho_{t,j}\nabla_xG(y_{j,t},y_{i,t})-t\nabla w(y_{i,t}).
\end{align*}
By direct computation, we have
\begin{align}
\label{s.12}
|q_{t,i}-p_{t,i}|=O(e^{-\lambda_t}).
\end{align}
For $y\in B_{2r_0}(p_{t,i}),$ we set
\begin{align}
\label{s.13}
\eta_{t,i}(y)=~&v_{t}(y)-I_{t,i}(y)-(G_{t,i}(ty)-G_{t,i}(tp_{t,i})),
\end{align}
It is easy to see that
\begin{align}
\label{s.14}
\eta_{t,i}(p_{t,i})=v_{t}(p_{t,i})-I_{t,i}(p_{t,i})=O(t^2),~\nabla\eta_{t,i}(p_{t,i})=O(t^2).
\end{align}
Let $R_{t,i}=\left(C_{t,i}e^{\lambda_{t,i}}\right)^{\frac12}$ and $\tilde\eta_{t,i}$ be the scaled function of $\eta_{t,i}$, that is
\begin{align*}
\tilde{\eta}_{t,i}(z)=\eta_{t,i}(p_{t,i}+R_{t,i}^{-1}z)~\mathrm{for}~|z|\leq 2R_{t,i}r_0.
\end{align*}
For $\tilde{\eta}_{t,i}(z)$, we have the following estimate
\begin{align}
\label{s.15}
|\tilde{\eta}_{t,i}(z)|\leq C_{\e}(t\|\tilde{\phi}_t\|_*+t^2)(1+|z|)^{\e},~\e\in(0,1).
\end{align}
We shall provide the proof of (\ref{s.15}) in Appendix A, see Lemma \ref{le5.1}. Substituting (\ref{s.15}) into (\ref{s.7}) and by taking derivative with respect to $x$ on both sides, we can obtain
$$\nabla u_t(x)=\nabla w+\sum_{i=1}^m\rho_{t,i}\nabla G(x,y_{i,t})+o(1),$$
and together with (\ref{s.7}), we get $\|\t\phi_t\|_*=o(1).$
\medskip

\noindent Step 2. We derive the estimate on $\rho_{t,i}-8\pi$.
\medskip

With the help of (\ref{s.15}), we can improve the estimate on (\ref{s.18}) and get
\begin{equation}
\label{s.18n}
\int_{B_{2r_0}(p_{t,i})}\left(\xi\cdot\nabla h_{1,t}(ty)\right)e^{v_{t}(y)}\mathrm{d}y=\rho_{t,i}\left(\xi\cdot\frac{\nabla h_{1,t}(tp_{t,i})}{h_{1,t}(tp_{t,i})}\right)+O(t^2\log t).
\end{equation}
As a consequence of (\ref{s.20n}) and (\ref{s.18n}), we have
\begin{equation}
\label{s.20}
\begin{aligned}
&t\frac{\nabla_xh(tp_{t,i})}{h(tp_{t,i})}+2\alpha_1\frac{p_{t,i}-\underline{e}}{|p_{t,i}-\underline{e}|^2}
+2\alpha_2\frac{p_{t,i}+\underline{e}}{|p_{t,i}+\underline{e}|^2}-\sum_{j\neq i}\frac{\rho_{t,j}}{2\pi}\frac{p_{t,i}-p_{t,j}}{|p_{t,i}-p_{t,j}|^2}\\
=~&t\nabla_xR_t(tp_{t,i})-t\sum_{j=1}^m\rho_{t,j}R(tp_{t,j},tp_{t,i})-t\nabla w(tp_{t,i})+O(t^2\log t+t\|\t\phi\|_*).
\end{aligned}
\end{equation}
Now we are in the position to obtain the estimate on $\rho_{t,i}-8\pi$. By the definition of $\rho_{t,i}$, we have
\begin{equation}
\label{s.21}
\begin{aligned}
\rho_{t,i}&=\int_{U_{i,t}^{r_0}}\frac{\rho he^{u_t-G_t^{(2)}}}{\int_Mhe^{u_t-G_t^{(2)}}}
=\int_{B_{r_0}(p_{t,i})}h_{1,t}(ty)e^{v_t}\\
&=\int_{B_{r_0}(p_{t,i})}8C_{t,i}e^{I_{t,i}}+\int_{B_{r_0}(p_{t,i})}8C_{t,i}e^{I_{t,i}}H_{t,i}(y,\eta_{t,i}),
\end{aligned}
\end{equation}
where
\begin{align*}
H_{t,i}(y,\eta_{t,i})=\frac{h(ty)|y-\underline{e}|^{2\alpha_1}|y+\underline{e}|^{2\alpha_2}e^{\psi(ty)-R_t(ty)}}
{h(tp_{t,i})|p_{t,i}-\underline{e}|^{2\alpha_1}|p_{t,i}+\underline{e}|^{2\alpha_2}e^{\psi(tp_{t,i})-R_t(tp_{t,i})}}
e^{\eta_{t,i}+G_{t,i}(ty)-G_{t,i}(tp_{t,i})}-1.
\end{align*}
Since $8C_{t,i}\int_{\mathbb{R}^2}e^{I_{t,i}(y)}\mathrm{d}y=8\pi,$ from (\ref{s.21}), we have
\begin{align}
\label{s.22}
\rho_{t,i}-8\pi=\int_{B_{r_0}(p_{t,i})}8C_{t,i}e^{I_{t,i}}H_{t,i}(y,\eta_{t,i})+O(t^2).
\end{align}
Furthermore, the term $H_{t,i}(y,\eta_{t,i})$ can be expanded as follows:
\begin{equation}
\label{s.23}
\begin{aligned}
H_{t,i}(y,\eta_{t,i})=&~\eta_{t,i}+H_{t,i}(y,0)+H_{t,i}(y,0)\eta_{t,i}+O(1)|\eta_{t,i}|^2\\
=&~\eta_{t,i}(y)+\l b_{t,i},y-p_{t,i}\r+\l B_{t,i}(y-p_{t,i}),(y-p_{t,i})\r\\
&~+O(1)|y-p_{t,i}|^{2+\beta}+H_{t,i}(y,0)\eta_{t,i}+O(1)|\eta_{t,i}|^2,
\end{aligned}
\end{equation}
where $b_{t,i}$ and $B_{t,i}$ are the gradient and Hessian of $H_{t,i}(y,0)$ at $y=p_{t,i}$ and $\beta>0.$ We note that
\begin{align}
\label{s.24}
\Delta H_{t,i}(y,0)\mid_{y=p_{t,i}}=O(t^2).
\end{align}
Using (\ref{s.15}), (\ref{s.20}) and (\ref{s.24}), by direct computations we can get the following estimates:
\begin{align}
\label{s.25}
\left|\int_{B_{r_0}(p_{t,i})}e^{I_{t,i}}\eta_{t,i}(y)\mathrm{d}y\right|=O(t\|\tilde{\phi}_t\|_*+t^2),
\end{align}
\begin{align}
\label{s.26}
\left|\int_{B_{r_0}(p_{t,i})}e^{I_{t,i}}\l b_{t,i},y-p_{t,i}\r\mathrm{d}y\right|+\left|\int_{B_{r_0}(p_{t,i})}e^{I_{t,i}}\l B_{t,i}(y-p_{t,i},y-p_{t,i})\r\mathrm{d}y\right|=o(t^2),
\end{align}
\begin{align}
\label{s.27}
\left|\int_{B_{r_0}(p_{t,i})}e^{I_{t,i}(y)}(|y-p_{t,i}|^{2+\beta}+\eta_{t,i}^2)\mathrm{d}y\right|
+\left|\int_{B_{r_0}(p_{t,i})}e^{I_{t,i}(y)}H_i(y,0)\eta_{t,i}\mathrm{d}y\right|=O(t^2).
\end{align}
Putting the estimates (\ref{s.25})-(\ref{s.27}) into (\ref{s.22}), we get
\begin{align}
\label{s.28}
\rho_{t,i}-8\pi=O(t\|\tilde{\phi}\|_*+t^2\log t).
\end{align}
A direct consequence of (\ref{s.20}) and (\ref{s.28}) is that, for any $i\in\{1,2,\cdots,m\}$, the blow up points $\{p_1,p_2,\cdots,p_m\}$ satisfy the following $m$-equations:
\begin{align}
\label{s.29}
\alpha_1\frac{p_{i}-\underline{e}}{|p_{i}-\underline{e}|^2}
+\alpha_2\frac{p_{i}+\underline{e}}{|p_{i}+\underline{e}|^2}-2\sum_{j\neq i}\frac{p_{i}-p_{j}}{|p_{i}-p_{j}|^2}=0,~i=1,2,\cdots,m.
\end{align}
In section 6, we shall prove that (\ref{s.29}) admits a unique solution.
\medskip

\noindent Step 3. We establish the estimate of $\|\t\phi_t\|_*$.
\medskip

We can represent $\rho$ as the following:
\begin{align}
\label{s.30}
\rho=~&\sum_{i=1}^m\frac{\int_{U_{i,t}^{2r_0}}\rho he^{u_t-G_t^{(2)}}}{\int_Mhe^{u_t-G_t^{(2)}}}+
\frac{\int_{M\setminus\bigcup_{i=1}^mU_{i,t}^{2r_0}}\rho he^{u_t-G_t^{(2)}}}{\int_Mhe^{u_t-G_t^{(2)}}}\nonumber\\
=~&8m\pi+O(t\|\t\phi_t\|_*+t^2\log t)+\frac{\int_{M\setminus\bigcup_{i=1}^mU_{i,t}^{2r_0}}\rho he^{u_t-G_t^{(2)}}}{\int_M he^{u_t-G_t^{(2)}}},
\end{align}
where we used that $he^{u_t-G_t^{(2)}}$ is bounded in $M\setminus\bigcup_{i=1}^mU_{i,t}^{r_0}.$

In $M\setminus\bigcup_{i=1}^mU_{i,t}^{2r_0},$ by (\ref{s.28}), we have
\begin{align}
\label{s.31}
u_t-G_t^{(2)}=~&w+\sum_{i=1}^m8\pi G(x,y_{i,t})+O(t\log t\|\tilde{\phi}_t\|_*+t^2(\log t)^2)-G_t^{(2)}+\tilde{\phi}_t\nonumber\\
=~&w_m+\t\phi_t+\log\left(\frac{|x|^{4m}|x-t\underline{e}|^{2\alpha_1}|x+t\underline{e}|^{2\alpha_2}}{|x|^{2\alpha_1+2\alpha_2}\prod_{i=1}^m|x-tp_{t,i}|^4}\right)
+O(t\log t\|\t\phi_t\|_*+t) \nonumber\\
=~&w_m+\t\phi_t+O\left(\frac{t}{|x|}1_{B_{r_0}(0)}\right)+O(t\log t\|\t\phi_t\|_*+\|\t\phi_t\|_*^2+t).
\end{align}
where $1_{B_{r_0}(0)}$ is the characteristic function.

Substituting this expansion into the integral over $M\setminus\bigcup_{i=1}^mU_{i,t}^{2r_0}$ we get,
\begin{equation}
\label{s.32}
\begin{aligned}
&\int_{M\setminus\bigcup_{i=1}^mU_{i,t}^{2r_0}}he^{u_t-G_t^{(2)}}\\
=~&\int_{M\setminus\bigcup_{i=1}^mU_{i,t}^{2r_0}}he^{w_m}\left(1+\t\phi_t+O(\frac{t}{|x|}1_{B_{r_0}(0)}+t+t\log t\|\t\phi_t\|_*+\|\t\phi_t\|_*^2) \right)\\
=~&\int_Mhe^{w_m}+\int_{M\setminus\bigcup_{i=1}^mU_{i,t}^{2r_0}}he^{w_m}\t\phi_t+O(t+t\log t\|\t\phi_t\|_*+\|\t\phi_t\|_*^2).
\end{aligned}
\end{equation}
Using (\ref{s.30})-(\ref{s.32}), we find,
\begin{equation}
\label{s.33}
\begin{aligned}
\int_Mhe^{u_t-G_t^{(2)}}=~&\frac{\rho}{\rho-8m\pi}\int_Mhe^{w_m}
+\frac{\rho}{\rho-8m\pi}\int_{M\setminus\bigcup_{i=1}^mU_{i,t}^{2r_0}}he^{w_m}\t\phi_t
\\&+O(t+t\log t\|\t\phi_t\|_*+\|\t\phi_t\|_*^2).
\end{aligned}
\end{equation}
From (\ref{s.31}) and (\ref{s.33}), we get the following equation for $\t\phi_t$ in $M\setminus\bigcup_{i=1}^mU_{i,t}^{2r_0}$
\begin{align}
\label{s.34}
\Delta\tilde\phi_t+(\rho-8m\pi)\frac{he^{w_m}}{\int_Mhe^{w_m}}\t\phi_t
-(\rho-8m\pi)\frac{(\int_{M\setminus\bigcup_{i=1}^mU_{i,t}^{4r_0}}he^{w_m}\t\phi_t)he^{w_m}}{(\int_Mhe^{w_m})^2}
=\Pi_1,
\end{align}
where
\begin{align*}
\Pi_1=O\left(\frac{t}{|x|}1_{B_{r_0}(0)}+t+t\log t\|\t\phi_t\|_*+\|\t\phi_t\|_*^2\right).
\end{align*}

In the following, we shall extend the equation (\ref{s.34}) to be defined on $M$. To this purpose we fix a point $x_{t}\in\bigcup_i\partial U_{i,t}^{3r_0}$ and in each set $U_{i,t}^{4r_0}$ we let
\begin{align*}
\tilde{\phi}_t^{ext}=\chi_{t}(x)\tilde{\phi}_t(x)+(1-\chi_{t}(x))\tilde{\phi}_t(x_{t}),
\end{align*}
where
\begin{align*}
\chi_{t}(x)=\left\{\begin{array}{ll}
1,~&\mathrm{for}~x\in M\setminus(\bigcup_{i=1}^m U_{i,t}^{4r_0}),\\
0,~&\mathrm{for}~x\in \bigcup_{i=1}^mU_{i,t}^{2r_0}.
\end{array}\right.
\end{align*}
Then we can see that $\tilde{\phi}_t^{ext}$ satisfies
\begin{align}
\label{s.35}
\Delta\tilde{\phi}_t^{ext}+(\rho-8m\pi)\frac{he^{w_m}}{\int_Mhe^{w_m}}\tilde{\phi}_t^{ext}-
(\rho-8m\pi)\frac{(\int_Mhe^{w_m}\tilde{\phi}_t^{ext})he^{w_m}}{(\int_Mhe^{w_m})^2}=\Pi_2~\mathrm{in}~M,
\end{align}
with
\begin{align*}
\Pi_2=~&(\Delta\chi_{t})(\tilde{\phi}_t(x)-\tilde{\phi}_t(x_{t}))
+2\nabla\chi_{t}\nabla\tilde{\phi}_t+\frac{he^{w_m}(1-\chi_t)\t\phi_t(x_t)}{\int_Mhe^{w_m}}\nonumber\\
&-\sum_{i=1}^m(\rho-8m\pi)\frac{(\int_{U_{i,t}^{4r_0}}he^{w_m}\tilde{\phi}^{ext}_t)he^{w_m}}{(\int_Mhe^{w_m})^2}
\nonumber\\
&+O(\frac{t}{|x|}1_{B_{r_0}(0)}+t+t\log t\|\tilde{\phi}_t\|_*+\|\tilde{\phi}_t\|_*^2).
\end{align*}
It is easy to see that,
\begin{align*}
\|\Pi_2\|_{L^{\delta}(M)}=O(t+t^{\frac{2}{\delta}-1}\|\tilde{\phi}_t\|_*+\|\tilde{\phi}_t\|_*^2),
\end{align*}
with suitable $\delta\in(1,2).$ Since $w$ is non-degenerate, we obtain:
\begin{align}
\label{s.36}
\left\|\tilde{\phi}_t^{ext}-\int_M\tilde{\phi}_t^{ext}\right\|_{W^{2,\delta}(M)}\leq C(t+t^{\frac{2}{\delta}-1}\|\tilde\phi_t\|_*+\|\tilde\phi_t\|_*^2).
\end{align}
It is not difficult to see that $\int_M\tilde{\phi}_t^{ext}=O(t^2)\|\tilde\phi_t\|_*+O(t^2\log t)$. Hence, by Sobolev embedding and (\ref{s.36}), we have
\begin{align}
\label{s.37}
\|\tilde{\phi}_t^{ext}\|_{L^\infty(M)}\leq C\left(t+t^{\frac{2}{\delta}-1}\|\tilde{\phi}_t\|_*+\|\tilde{\phi}_t\|_*^2\right),
\end{align}
and consequently,
\begin{align}
\label{s.38}
\|\tilde{\phi}_t\|_{L^{\infty}(M\setminus\bigcup_{i=1}^mU_{i,t}^{4r_0})}\leq C\left(t+t^{\frac{2}{\delta}-1}\|\tilde{\phi}_t\|_*+\|\tilde{\phi}_t\|_*^2\right).
\end{align}
Hence, for $x\in U_{i,t}^{4r_0}\setminus U_{i,t}^{2r_0},$ using (\ref{s.38}) we obtain:
\begin{align*}
|\t\phi_t|=|\t\phi_t^{ext}+(1-\chi_{t})(\t{\phi}_t-\t\phi_t(x_{t})|\leq |\t\phi_t^{ext}|+t\|\tilde\phi_t\|_*,
\end{align*}
and it implies from (\ref{s.37}) that:
\begin{align}
\label{s.39}
\|\tilde{\phi}_t\|_{L^{\infty}(M\setminus\bigcup_{i=1}^mU_{i,t}^{2r_0})}\leq C\left(t+t^{\frac{2}{\delta}-1}\|\tilde{\phi}_t\|_*+\|\tilde{\phi}_t\|_*^2\right).
\end{align}
Before studying the term $\nabla\t\phi_t$, we establish the following fact, for $x\in M\setminus(\bigcup_{i=1}^mU_{i,t}^{2r_0}),$
\begin{equation}
\label{s.40}
\begin{aligned}
&\int_{U_{i,t}^{r_0}}\left(\frac{(x-z)_j}{|x-z|^2}
-\frac{(x-tp_{t,i})_j}{|x-tp_{t,i}|^2}\right)\frac{he^{u_t-G_t^{(2)}}}{\int_Mhe^{u_t-G_t^{(2)}}}\mathrm{d}z\\
=~&\int_{B_{r_0}(p_{t,i})}8C_{t,i}e^{I_{t,i}}\left(\frac{(x-ty)_j}{|x-ty|^2}-\frac{(x-tp_{t,i})_j}{|x-tp_{t,i}|^2}\right)\mathrm{d}y\\
&+\int_{B_{r_0}(p_{t,i})}8C_{t,i}e^{I_{t,i}}\left(\frac{(x-ty)_j}{|x-ty|^2}-\frac{(x-tp_{t,i})_j}{|x-tp_{t,i}|^2}\right)O(|\eta_{t,i}|+|y-p_{t,i}|)\mathrm{d}y\\
=~&O(t\|\t\phi_t\|_*+t\log t),
\end{aligned}
\end{equation}
where we used symmetry, the estimate (\ref{s.15}) and the expansion (\ref{s.23}). Using (\ref{s.39}), (\ref{s.40}) and the Green's representation formula we get that,
\begin{equation}
\label{s.41}
\begin{aligned}
\nabla u_t(x)=~&\left(\int_{M\setminus\bigcup_{i=1}^mU_{i,t}^{4r_0}}
+\sum_{i=1}^m\int_{B_{r_0t}(tp_{t,i})}\right)
\nabla G(x,z)\frac{\rho he^{u_t-G_t^{(2)}}}{\int_Mhe^{u_t-G_t^{(2)}}}+O(t)\\
=~&\int_{M\setminus\bigcup_{i=1}^mU_{i,t}^{4r_0}}\nabla G(x,z)\frac{(\rho-8m\pi)he^{w_m}}{\int_Mhe^{w_m}}(1+O(\|\t\phi_t\|_{L^{\infty}(M\setminus \bigcup_{i=1}^mU_{i,t}^{2r_0})}))\\
&+\sum_{i=1}^m\int_{U_{i,t}^{r_0}}\nabla G(x,z)\frac{\rho he^{u_t-G_t^{(2)}}}{\int_Mhe^{u_t-G_t^{(2)}}}+O(t\log t)\\
=~&\int_{M}\nabla G(x,z)\frac{(\rho-8m\pi)he^{w_m}}{\int_Mhe^{w_m}}+\sum_{i=1}^m\rho_{t,i}\nabla G(x,y_{i,t})\\
&+O(t\log t+t^{\frac{2}{\delta}-1}\|\t\phi_t\|_*+\|\t\phi_t\|_*^2)\\
=~&\nabla w(x)+\sum_{i=1}^m\rho_{t,i}\nabla G(x,y_{i,t})+O(t\log t+t^{\frac{2}{\delta}-1}\|\t\phi_t\|_*+\|\t\phi_t\|_*^2)
\end{aligned}
\end{equation}
for $x\in M\setminus\bigcup_{i=1}^mU_{i,t}^{2r_0}$. Therefore, from (\ref{s.39}) and (\ref{s.41}) we can derive that,
\begin{align}
\label{s.42}
\|\tilde{\phi}_t\|_*=O(t\log t).
\end{align}
Combining (\ref{s.28}) and (\ref{s.42}), we can finally obtain the estimate
$$\rho_{t,i}-8\pi=O(t^2\log t),$$
and complete the proof.
\end{proof}

\noindent {\bf Remark:} If $\alpha_1=\alpha_2=1$, then $m=1$. By (\ref{s.20n}), we have $|p_{t,1}|=O(t)$. While the estimate in (\ref{s.31}) can be easily improved by
\begin{equation*}
u_t-G_t^{(2)}=w_m+\tilde\phi_t+O\left(\frac{t^2}{|x|^2}1_{B_{r_0}(0)}\right)+O(t\log t\|\tilde\phi\|_*+t^2\log t).
\end{equation*}
As a consequence, we can follow the arguments in the proof of Proposition \ref{pr3.2} to obtain that,
$$\|u_t-w-\rho_{t,1}G(x,y_{1,t})\|_{L^{\infty}(M\setminus U_{1,t}^{2r_0})}\leq Ct^{2\tau},~\mathrm{for~any}~\tau\in(0,1).$$
This remark is important in the forthcoming work \cite{lly}.
\medskip

Next, we shall derive a relation between $\lambda_{t,i}$ and $\log t$.
\begin{proposition}
\label{pr3.3}
\begin{equation}
\label{r.1}
\begin{aligned}
&\lambda_{t,i}+(2+2\alpha_1+2\alpha_2-4m)\log t\\
=~&\log\left(\frac{\rho}{\rho-8m\pi}\int_Mhe^{w_m}\right)-2\log C_{t,i}
+\sum_{j\neq i}4\log|p_{t,j}-p_{t,i}|\\
&-\sum_{j=1}^m8\pi R(y_{i,t},y_{j,t})-w(tp_i)+O(t\log t)
\end{aligned}
\end{equation}
where
\begin{align*}
w_m=w+4\pi(2m-\alpha_1-\alpha_2)G(x,0).
\end{align*}
\end{proposition}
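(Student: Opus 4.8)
The plan is to determine the $O(1)$ constant left open in Proposition~\ref{pr3.1} by \emph{matching}, on the fixed circle $\partial B_{2r_0}(p_{t,i})$, the inner bubble description of $v_t$ against the outer Green's representation (\ref{s.10}). All the quantitative ingredients have already been assembled in Propositions~\ref{pr3.1} and~\ref{pr3.2}, so the argument reduces to a matching computation followed by careful error accounting.

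Concretely, I would fix any $y_0\in\partial B_{2r_0}(p_{t,i})$. On the inner side, (\ref{s.13}) gives
\[
v_t(y_0)=I_{t,i}(y_0)+\bigl(G_{t,i}(ty_0)-G_{t,i}(tp_{t,i})\bigr)+\eta_{t,i}(y_0),
\]
and since $C_{t,i}e^{\lambda_{t,i}}\to+\infty$ while $|q_{t,i}-p_{t,i}|=O(e^{-\lambda_t})$ by (\ref{s.12}), expanding the logarithm yields $I_{t,i}(y_0)=-\lambda_{t,i}-2\log C_{t,i}-4\log(2r_0)+O(e^{-c\lambda_t})$ for some $c>0$. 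On the outer side, (\ref{s.10}) reads
\[
v_t(y_0)=-\tfrac{\rho_{t,i}}{2\pi}\log|y_0-p_{t,i}|+\Bigl(2+2\alpha_1+2\alpha_2-\tfrac{\rho_{t,i}}{2\pi}\Bigr)\log t+G_{t,i}(ty_0)-\log\!\int_Mhe^{u_t-G_t^{(2)}}+\tilde\phi_t(ty_0)-\psi_t(y_0).
\]
Using $\rho_{t,i}-8\pi=O(t^2\log t)$ from Proposition~\ref{pr3.2} to write $\tfrac{\rho_{t,i}}{2\pi}=4+O(t^2\log t)$, equating the two expressions and cancelling the common terms $G_{t,i}(ty_0)$ and $-4\log(2r_0)$ leaves
\[
\lambda_{t,i}+(2\alpha_1+2\alpha_2-2)\log t=-2\log C_{t,i}-G_{t,i}(tp_{t,i})+\log\!\int_Mhe^{u_t-G_t^{(2)}}+O(t\log t).
\]

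Next I would substitute $\log\int_Mhe^{u_t-G_t^{(2)}}=\log\bigl(\tfrac{\rho}{\rho-8m\pi}\int_Mhe^{w_m}\bigr)+O(t)$ from (\ref{1.12}) and expand
\[
G_{t,i}(tp_{t,i})=\rho_{t,i}R(y_{i,t},y_{i,t})+\sum_{j\neq i}\rho_{t,j}G(y_{i,t},y_{j,t})+w(tp_{t,i}),
\]
where $\rho_{t,j}=8\pi+O(t^2\log t)$ and $G(y_{i,t},y_{j,t})=-\tfrac1{2\pi}\log\bigl(t|p_{t,i}-p_{t,j}|\bigr)+R(y_{i,t},y_{j,t})$. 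The $m-1$ off-diagonal Green's functions then produce $-4(m-1)\log t-4\sum_{j\neq i}\log|p_{t,i}-p_{t,j}|$, and since $(2\alpha_1+2\alpha_2-2)+4(m-1)=2+2\alpha_1+2\alpha_2-4m$, moving this coefficient of $\log t$ to the left-hand side and replacing $w(tp_{t,i})$ by $w(tp_i)$ (the difference being $O(t|p_{t,i}-p_i|)=o(t)$ since $w$ is smooth near $\mathfrak q$ and $p_{t,i}\to p_i$) produces precisely (\ref{r.1}). I would expect the identity (\ref{s.29}) for the $p_i$'s to drop out of the same matching as a byproduct.

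The main obstacle is the error bookkeeping, concentrated in the single remainder $\eta_{t,i}(y_0)$. The estimate (\ref{s.15}) for $\tilde\eta_{t,i}$ carries a polynomial factor $(1+|z|)^{\varepsilon}$ which is a priori large at the boundary scale $|z|\sim R_{t,i}=(C_{t,i}e^{\lambda_{t,i}})^{1/2}$, and Proposition~\ref{pr3.1} only gives $R_{t,i}=O\bigl(t^{-(1+\alpha_1+\alpha_2-2m)}\bigr)$; nonetheless, using the sharp bound $\|\tilde\phi_t\|_*=O(t\log t)$ from Proposition~\ref{pr3.2} one gets $|\eta_{t,i}(y_0)|\lesssim(t\|\tilde\phi_t\|_*+t^2)R_{t,i}^{\varepsilon}=O\bigl(t^{2-c\varepsilon}\log t\bigr)=O(t\log t)$ for $\varepsilon>0$ small, and likewise $\tilde\phi_t(ty_0)=O(\|\tilde\phi_t\|_*)=O(t\log t)$ and $\psi_t(y_0)=O(t^2)$. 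Thus the hard part is not conceptual but consists in verifying that every remainder is absorbed into $O(t\log t)$: the whole step rests on the sharp quadratic and linear estimates $\rho_{t,i}-8\pi=O(t^2\log t)$, $\|\tilde\phi_t\|_*=O(t\log t)$ of Proposition~\ref{pr3.2}, without which one could not improve the $O(1)$ remainder of (\ref{3.2}).
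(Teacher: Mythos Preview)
Your proposal is correct and follows essentially the same route as the paper: both proofs match the inner bubble expansion $v_t=I_{t,i}+(G_{t,i}(t\cdot)-G_{t,i}(tp_{t,i}))+\eta_{t,i}$ against the outer representation (\ref{s.10}) on $\partial B_{2r_0}(p_{t,i})$, then insert the sharp estimates $\rho_{t,i}-8\pi=O(t^2\log t)$, $\|\tilde\phi_t\|_*=O(t\log t)$ and (\ref{1.12}) from Proposition~\ref{pr3.2} together with the $\eta_{t,i}$ bound (\ref{s.15}) to reduce every remainder to $O(t\log t)$. The only slip is the displayed arithmetic ``$(2\alpha_1+2\alpha_2-2)+4(m-1)=2+2\alpha_1+2\alpha_2-4m$'': the sign should be a minus (you are subtracting the $+4(m-1)\log t$ coming from $-G_{t,i}$ when moving it to the left), but your stated conclusion is the correct one.
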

%

\begin{proof}
From the results above, we see that $\|\tilde{\phi}_t\|_*=O(t\log t)$ and $\rho_{t,i}-8\pi=O(t^2\log t)$. As a consequence, for $x\in\partial U_{i,t}^{2r_0}$ we have:
\begin{align}
\label{r.2}
u_t(x)-w(x)-\sum_{i=1}^m\rho_{t,i}G(x,y_{i,t})=O(t\log t).
\end{align}
On the other hand, we recall that
\begin{align}
\label{r.3}
\eta_{t,i}(y)=~&u_t(ty)-\log\int_Mhe^{u_t-G_t^{(2)}}+(2+2\alpha_1+2\alpha_2)\log t-I_{t,i}(y)\nonumber\\
&-(G_{t,i}(ty)-G_{t,i}(tp_{t,i}))-\psi_{t}.
\end{align}
With the estimate on $\|\t\phi_t\|_*$ and (\ref{s.15}), for $|z|\leq 2R_{t,i}r_0$ we derive that,
\begin{align}
\label{r.4}
|\eta_{t,i}(p_{t,i}+R_{t,i}^{-1}z)|\leq C_{\e}t^2\log t(1+|z|)^{\e},~\e\in(0,1).
\end{align}
From (\ref{s.33}) and the estimate on $\t\phi_t,$ we have
\begin{align}
\label{r.5}
\int_Mhe^{u_t-G_t^{(2)}}=\frac{\rho}{\rho-8m\pi}\int_Mhe^{w_m}+O(t).
\end{align}
Using (\ref{r.3}-\ref{r.5}), for $y\in\partial B_{2r_0}(p_{t,i})$ we deduce that,
\begin{equation}
\begin{aligned}
\label{r.6}
u_t(ty)=~&\log\left(\frac{\rho}{\rho-8m\pi}\int_Mhe^{w_m}\right)-(2\alpha_1+2\alpha_2-2)\log t-\lambda_{t,i}-2\log C_{t,i}\\
&+(\frac{\rho_{t,i}}{2\pi}-4)\log|ty-tp_{t,i}|+\sum_{j=1}^m\rho_{t,j}G(ty,y_{j,t})+w(ty)-G_{t,i}(tp_{t,i})+O(t)\\
=~&\log\left(\frac{\rho}{\rho-8m\pi}\int_Mhe^{w_m}\right)-(2+2\alpha_1+2\alpha_2-4m)\log t-\lambda_{t,i}\\
&-2\log C_{t,i}+\sum_{j\neq i}4\log|p_{t,j}-p_{t,i}|-\sum_{j=1}^m8\pi R(y_{i,t},y_{j,t})\\
&+\sum_{j=1}^m\rho_{t,j}G(ty,y_{j,t})+w(ty)-w(tp_{t,i})+O(t).
\end{aligned}
\end{equation}
Combining (\ref{r.2}) and (\ref{r.6}), we conclude that there holds:
\begin{equation}
\label{r.7}
\begin{aligned}
&\lambda_{t,i}+(2+2\alpha_1+2\alpha_2-4m)\log t\\
=~&\log\left(\frac{\rho}{\rho-8m\pi}\int_Mhe^{w_m}\right)-2\log C_{t,i}
+\sum_{j\neq i}4\log|p_{t,j}-p_{t,i}|\\
&-\sum_{j=1}^m8\pi R(y_{i,t},y_{j,t})-w(tp_{t,i})+O(t\log t),
\end{aligned}
\end{equation}
as claimed.
\end{proof}

\noindent {\em Proof of Theorem \ref{th1.4}.} Theorem \ref{th1.4} is a direct consequence of Proposition \ref{pr3.2}, \ref{pr3.3} and (\ref{r.5}). \hfill $\square$

\section{The uniqueness of the blow up points}
In this section, we prove that the location of the blow up points $p_i,i=1,2,\cdots,m$ are uniquely determined, i.e., (\ref{s.29}) with $i=1,\cdots,m$ admits a unique solution. We shall use the complex number $e_i$ to denote the point $p_i$ and assume $\underline{e}=1$ (after a rotation if necessary). We rewrite (\ref{s.29}) as the following
$$\alpha_1\frac{e_l-\underline{e}}{|e_l-\underline{e}|^2}+\alpha_2\frac{e_l+\underline{e}}{|e_l+\underline{e}|^2}=2\sum_{j\neq l}\frac{e_l-e_j}{|e_l-e_j|^2},~1\leq l\leq m.$$
Then, we have
\begin{equation}
\label{bc}
\frac{\alpha_1}{e_l-1}+\frac{\alpha_2}{e_l+1}=2\sum_{j\neq l}\frac{1}{e_l-e_j}.
\end{equation}
Hence,
\begin{equation*}
\alpha_1(e_l+1)+\alpha_2(e_l-1)=2\sum_{j\neq l}\frac{e_l^2-1}{e_l-e_j}
\end{equation*}
Taking the summation w.r.t. $l,$ we have
\begin{equation*}
(\alpha_1+\alpha_2)\sum_{i=1}^me_i+(\alpha_1-\alpha_2)m=\sum_{i\neq j}2(e_i+e_j)=2(m-1)\sum_{i=1}^me_i,
\end{equation*}
and consequently,
\begin{equation*}
\sum_{i=1}^me_i=\frac{(\alpha_2-\alpha_1)m}{(\alpha_1+\alpha_2-2(m-1))}.
\end{equation*}

Before we proceed further, we let $I=\{e_1,\cdots,e_m\},$ $I_l=I\setminus\{e_l\}$ and $I_{l,j}=I\setminus\{e_l,e_j\}$. We introduce the following notation,
$$\left(\begin{matrix}I\\k\end{matrix}\right)=\sum_{j_1<\cdots<j_k}e_{j_1}\cdots e_{j_k},~\mathrm{where}~e_{j_k}\in I~\mathrm{for}~k=1,\cdots,m$$
and
$$\left(\begin{matrix}I_l\\k\end{matrix}\right)=\sum_{j_1<\cdots<j_k}e_{j_1}\cdots e_{j_k},~\mathrm{where}~e_{j_k}\in I_l~\mathrm{for}~k=1,\cdots,m-1.$$
Clearly, we set $\left(\begin{matrix}I\\0\end{matrix}\right)=1$.

For the equation \eqref{bc}, we have
\begin{equation*}
\alpha_1(e_l+1)\prod_{j\neq l}(e_l-e_j)+\alpha_2(e_l-1)\prod_{j\neq l}(e_l-e_j)=2(e_l^2-1)\sum_{j\neq l}\prod_{m\in I_{l,j}}(e_l-e_m).
\end{equation*}
The left hand side of the above equality can be written as follows:
\begin{equation}
\label{eq}
(\alpha_1+\alpha_2)e_l\prod_{j\neq l}(e_l-e_j)+(\alpha_1-\alpha_2)\prod_{j\neq l}(e_l-e_j)=2(e_l^2-1)\sum_{j\neq l}\prod_{m\in I_{j,l}}(e_l-e_m),
\end{equation}
and we notice that
\begin{equation}
\label{l1}
(\alpha_1+\alpha_2)e_l\prod_{j\neq l}(e_l-e_j)=(\alpha_1+\alpha_2)e_l\sum_{k=0}^{m-1}(-1)^k\left(\begin{matrix}I_l\\k\end{matrix}\right)e_l^{m-1-k},
\end{equation}
and
\begin{equation}
\label{l2}
(\alpha_1-\alpha_2)\prod_{j\neq l}(e_l-e_j)=(\alpha_1-\alpha_2)\sum_{k=0}^{m-1}(-1)^k\left(\begin{matrix}I_l\\k\end{matrix}\right)e_l^{m-1-k}.
\end{equation}
Concerning the right hand side of the above equation, we have:
\begin{equation}
\label{l3}
\begin{aligned}
\sum_{j\neq l}\prod_{m\in I_{l,j}}(e_l-e_m)=&~(m-1)e_l^{m-2}+\sum_{k=1}^{m-2}(-1)^k(m-1-k)
\left(\begin{matrix}I_l\\k\end{matrix}\right)e_l^{m-2-k}\\
=&~\sum_{k=0}^{m-2}(-1)^k(m-1-k)\left(\begin{matrix}I_l\\k\end{matrix}\right)e_l^{m-2-k}\\
\end{aligned}
\end{equation}

Since
\begin{equation*}
\left(\begin{matrix}I_l\\k\end{matrix}\right)=\sum_{i=0}^k(-1)^{k-i}\left(\begin{matrix}I\\i\end{matrix}\right)e_l^{k-i},
\end{equation*}
then we can rewrite the equation \eqref{l1} to \eqref{l3} as follows:
\begin{align*}
(\alpha_1+\alpha_2)e_l\prod_{j\neq l}(e_l-e_j)
=&~(\alpha_1+\alpha_2)e_l\sum_{k=0}^{m-1}\sum_{i=0}^k(-1)^{2k-i}\left(\begin{matrix}I\\i\end{matrix}\right)e_l^{m-1-i}\\
=&~(\alpha_1+\alpha_2)e_l\sum_{i=0}^{m-1}\sum_{k=i}^{m-1}(-1)^i\left(\begin{matrix}I\\i\end{matrix}\right)e_l^{m-1-i}\\
=&~(\alpha_1+\alpha_2)e_l\sum_{k=0}^{m-1}(-1)^k(m-k)\left(\begin{matrix}I\\k\end{matrix}\right)e_l^{m-1-k},
\end{align*}
\begin{align*}
(\alpha_1-\alpha_2)\prod_{j\neq l}(e_l-e_j)
&=~(\alpha_1-\alpha_2)\sum_{k=0}^{m-1}\sum_{i=0}^k(-1)^{2k-i}\left(\begin{matrix}I\\i\end{matrix}\right)e_l^{m-1-i}\\
&=~(\alpha_1-\alpha_2)\sum_{k=0}^{m-1}(-1)^k(m-k)\left(\begin{matrix}I\\k\end{matrix}\right)e_l^{m-1-k},
\end{align*}
and
\begin{align*}
\sum_{j\neq l}\prod_{m\in I_{l,j}}(e_l-e_m)&=~\sum_{k=0}^{m-2}\sum_{i=0}^k(-1)^{2k-i}(m-1-k)\left(\begin{matrix}I\\i\end{matrix}\right)e_l^{m-2-i}\\
&=~\sum_{k=0}^{m-2}(-1)^k\frac{(m-k)(m-k-1)}{2}\left(\begin{matrix}I\\k\end{matrix}\right)e_l^{m-2-k}.
\end{align*}
So, we have
\begin{align*}
&(\alpha_1+\alpha_2)\sum_{k=0}^{m-1}(-1)^k(m-k)\left(\begin{matrix}I\\k\end{matrix}\right)e_l^{m-k}+
(\alpha_1-\alpha_2)\sum_{k=0}^{m-1}(-1)^k(m-k)\left(\begin{matrix}I\\k\end{matrix}\right)e_l^{m-1-k}\\
&~\quad=\sum_{k=0}^{m-2}(-1)^k(m-k-1)(m-k)\left(\begin{matrix}I\\k\end{matrix}\right)(e_l^{m-k}-e_l^{m-k-2}).
\end{align*}
Therefore, $e_l$ satisfies
\begin{align*}
(\alpha_1+\alpha_2-(m-1))mz^m+\sum_{k=0}^{m-1}c_kz^{k}=0,
\end{align*}
where
\begin{align*}
c_{m-1}=\left((m-2-(\alpha_1+\alpha_2))(m-1)\left(\begin{matrix}I\\1\end{matrix}\right)+m(\alpha_1-\alpha_2)\right),
\end{align*}
and
\begin{equation}
\begin{aligned}
\label{l4}
c_{m-2-k}=&~(-1)^k\left((\alpha_1+\alpha_2)(m-2-k)-(m-k-3)(m-k-2)\right)\left(\begin{matrix}I\\k+2\end{matrix}\right)\\
&~+(-1)^{k-1}(\alpha_1-\alpha_2)(m-k-1)\left(\begin{matrix}I\\k+1\end{matrix}\right)\\
&~+(-1)^k(m-k)(m-k-1)\left(\begin{matrix}I\\k\end{matrix}\right),\quad~0\leq k\leq m-2.
\end{aligned}
\end{equation}
By Vita formula, we have
\begin{align}
\label{l5}
c_{m-2-k}=(-1)^k\left(\alpha_1+\alpha_2-(m-1)\right)m\left(\begin{matrix}I\\k+2\end{matrix}\right),\quad~0\leq k\leq m-2.
\end{align}
Combining (\ref{l4}) and (\ref{l5}), we have
\begin{align*}
\left(\begin{matrix}I\\k+2\end{matrix}\right)=
\frac{(m-k-1)(m-k)\left(\begin{matrix}I\\k\end{matrix}\right)
-(\alpha_1-\alpha_2)(m-k-1)\left(\begin{matrix}I\\k+1\end{matrix}\right)}
{(2+k)(\alpha_1+\alpha_2-2m+k+3)},
\end{align*}
We note that $\alpha_1+\alpha_2-2m+k+3\neq0$. So $\left(\begin{matrix}I\\k+2\end{matrix}\right)$ is uniquely determined by
$\left(\begin{matrix}I\\k\end{matrix}\right)$ and $\left(\begin{matrix}I\\k+1\end{matrix}\right)$. On the other hand, we already know
$\left(\begin{matrix}I\\0\end{matrix}\right)$ and $\left(\begin{matrix}I\\1\end{matrix}\right)$, hence we can uniquely get
$\left(\begin{matrix}I\\k\end{matrix}\right)$ by induction. Hence, we have proved that $e_l$ corresponds to the zeroes of a given polynomial and thus they are uniquely determined in this way. Therefore, the blow up points $\{p_1,p_2,\cdots,p_m\}$ are uniquely determined as well.

\section{Appendix A: The estimate (\ref{s.15})}
We establish the estimate (\ref{s.15}) by using the following lemma.
\begin{lemma}
\label{le5.1}
For any $\e\in(0,1)$, there exists $C_{\e}$ such that for all $t$ small,
\begin{align*}
|\t\eta_{t,i}(z)|\leq C_{\e}(t\|\t\phi_t\|_*+t^2)(1+|z|)^{\e}.
\end{align*}
\end{lemma}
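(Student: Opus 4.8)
The plan is to write down the linear equation for the rescaled error $\t\eta_{t,i}$, recognise it as a uniformly small perturbation of the linearised Liouville equation on the exhausting balls $B_{2R_{t,i}r_0}$, and close by an a priori (invertibility) estimate whose three--dimensional kernel has been neutralised in advance by the careful choice of the bubble $I_{t,i}$. Starting from $\Delta v_t+h_{1,t}(ty)e^{v_t}=0$, the bubble identity $\Delta I_{t,i}+8C_{t,i}e^{I_{t,i}}=0$, and $\Delta_y\big(G_{t,i}(ty)\big)=t^2\Delta_xG_{t,i}(ty)=O(t^2)$ in $B_{2r_0}(p_{t,i})$ (since $\Delta_xG_{t,i}$ is bounded there), the definition (\ref{s.13}) gives $\Delta\eta_{t,i}=-8C_{t,i}e^{I_{t,i}}H_{t,i}(y,\eta_{t,i})+O(t^2)$; rescaling $y=p_{t,i}+R_{t,i}^{-1}z$, using $R_{t,i}^{-2}\,8C_{t,i}e^{I_{t,i}(y)}=\tfrac{8}{(1+|z-z_{t,i}|^2)^2}$ with $z_{t,i}:=R_{t,i}(q_{t,i}-p_{t,i})=O(R_{t,i}^{-1})$ (by (\ref{s.12})), and the expansion (\ref{s.23}) of $H_{t,i}$, one obtains
\[
\Delta_z\t\eta_{t,i}+\tfrac{8}{(1+|z-z_{t,i}|^2)^2}\,\t\eta_{t,i}=g_{t,i}\ \ \text{in}\ B_{2R_{t,i}r_0},
\]
where, after using (\ref{s.19}) (so $b_{t,i}=O(t+t\|\t\phi_t\|_*)$), Proposition \ref{pr3.1} (so $R_{t,i}^{-1}=O(t)$) and $H_{t,i}(y,0)=\l b_{t,i},y-p_{t,i}\r+O(|y-p_{t,i}|^2)$, the forcing obeys $|g_{t,i}(z)|\le Ct^2(1+|z|)^{-2}+C(1+|z|)^{-4}\big(|\t\eta_{t,i}|^2+o(1)|\t\eta_{t,i}|\big)$; moreover the choices $\lambda_{t,i}=v_t(p_{t,i})$, $C_{t,i}=\tfrac18h_{1,t}(tp_{t,i})$ and $q_{t,i}$ give, via (\ref{s.14}), $\t\eta_{t,i}(0)=O(t^2)$ and $\nabla\t\eta_{t,i}(0)=O(t^2)$ --- the three conditions that annihilate the scaling and the two translation zero modes.

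The heart of the matter is a uniform (in $t$) a priori bound for $L_{t,i}:=\Delta+\tfrac{8}{(1+|z-z_{t,i}|^2)^2}$:
\[
\sup_{|z|\le 2R_{t,i}r_0}\tfrac{|\phi(z)|}{(1+|z|)^\e}\le C\Big(|\phi(0)|+|\nabla\phi(0)|+R_{t,i}^{-\e}\,\mathrm{osc}_{\p B_{2R_{t,i}r_0}}\phi+\big\|(1+|z|)^{2-\e}L_{t,i}\phi\big\|_\infty\Big),\ \ \e\in(0,1),
\]
which I would prove by contradiction/normalisation: rescale a would--be counterexample so that the weighted supremum is $1$ while the right side vanishes in the limit; if the supremum is attained at bounded distance, the limit solves $\Delta\Theta+\tfrac{8}{(1+|z|^2)^2}\Theta=0$ on $\R^2$ with $\Theta(0)=\nabla\Theta(0)=0$ and $O(|z|^\e)$ growth, hence $\Theta\equiv0$ (every such solution lies in the span of $\tfrac{1-|z|^2}{1+|z|^2}$ and $\tfrac{z_k}{1+|z|^2}$, all killed by the value and the gradient at the origin), a contradiction; escape to $1\ll|z|\ll R_{t,i}$ is ruled out by a barrier/Harnack argument where the potential is already negligible (using sub--harmonicity of $(1+|z|^2)^{\e/2}$ off the origin), and near the outer sphere one uses that the constant mode of the trace equals $\approx-\phi(0)$ (it rides the kernel element $\tfrac{1-|z|^2}{1+|z|^2}$), so that only the oscillation of $\phi$ enters. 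Applied to $\t\eta_{t,i}$, with the quadratic contribution absorbed on the left once $\t\eta_{t,i}\to0$ is known, this reduces (\ref{s.15}) to bounding $\mathrm{osc}_{\p B_{2R_{t,i}r_0}}\t\eta_{t,i}$: by (\ref{s.10}) and the definition of $\t\phi_t$, on $\p B_{2r_0}(p_{t,i})$ one has $\eta_{t,i}(y)=(\text{const in }y)-\tfrac{\rho_{t,i}}{2\pi}\log|y-p_{t,i}|+2\log(1+C_{t,i}e^{\lambda_{t,i}}|y-q_{t,i}|^2)-\psi_t(y)+\t\phi_t(ty)$, where the two logarithmic terms are constant on $\p B_{2r_0}(p_{t,i})$ up to $O(R_{t,i}^{-2})$, $\psi_t$ oscillates by $O(t^2)$ (as $\nabla\psi(0)=0$), and $\t\phi_t(ty)$ oscillates by $O(t\|\t\phi_t\|_*)$ over the image of $\p B_{2r_0}(p_{t,i})$, a set of $M$--diameter $O(t)$ on which $\|\nabla\t\phi_t\|_\infty\le\|\t\phi_t\|_*$. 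Hence $\mathrm{osc}_{\p B_{2R_{t,i}r_0}}\t\eta_{t,i}=O(t\|\t\phi_t\|_*+t^2)$, and the estimate yields (\ref{s.15}).

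The step I expect to be the main obstacle is the a priori bound with a constant that does not degenerate as $R_{t,i}\to+\infty$: one must rule out every mode of escape of the maximising sequence simultaneously and --- crucially --- book-keep that it is the \emph{oscillation}, not the size, of the boundary trace that enters, since a crude bound for $\eta_{t,i}$ on $\p B_{2r_0}(p_{t,i})$ is only $O(1)$ whereas its oscillation is the sharp $O(t\|\t\phi_t\|_*+t^2)$; it is exactly this observation, together with the neutralisation of the kernel through (\ref{s.14}) and (\ref{s.12}), that makes the factor $t\|\t\phi_t\|_*$ (and not a useless $O(1)$ or $o(1)$) appear in (\ref{s.15}). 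Everything else --- the form of $g_{t,i}$, the normalisation at the origin, and the oscillation computation --- is routine perturbation analysis.
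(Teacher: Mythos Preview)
Your proposal is correct and follows essentially the same strategy as the paper: a contradiction/normalisation argument against the linearised Liouville operator, where the three kernel directions are killed by (\ref{s.14}) and the large constant part of the boundary trace is removed so that only its oscillation (which is $O(t\|\t\phi_t\|_*+t^2)$ exactly as you compute) enters. The only difference is packaging: you state an abstract a priori bound with an $R_{t,i}^{-\e}\,\mathrm{osc}_{\partial B}$ term and argue that the constant boundary mode is tied to $\phi(0)$ via the kernel element $\tfrac{1-|z|^2}{1+|z|^2}$, whereas the paper implements the same cancellation concretely by writing the Green representation of $\bar\eta_{t,i}$ at the maximum point $z_{t,i}$ and at $0$ and subtracting, using that $\int_{\partial B}\partial_\nu G_i(a,\cdot)\,ds=-1$ for every interior point $a$, so the constant $\Theta_{t,i}$ drops out. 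Both routes lead to the same dichotomy ($z_{t,i}$ bounded versus $z_{t,i}\to\infty$) and the same contradiction.
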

\begin{proof}
We write the equation for $\t\eta_{t,i}(z)$ as follows:
\begin{align}
\label{5.1}
\left\{\begin{array}{l}
\Delta_z\t\eta_{t,i}(z)+8C_{t,i}e^{\xi_{t,i}}\t\eta_{t,i}(z)=O\left(e^{-\frac{\lambda_t}{2}}t(1+|z|)^{-3}
+e^{-\lambda_t}t^2\sum_{j=1}^m\rho_{t,j}\right),\\
\t\eta_{t,i}=O(t^2),~\nabla_z\t{\eta}_{t,i}(0)=O(t^2e^{-\lambda_t/2}),
\end{array}\right.
\end{align}
where $\xi_{t,i}$ is obtained by the mean value theorem. On $\partial B_{2R_{t,i}r_0}(0)$, by (\ref{s.10}) and (\ref{s.13}), we have
\begin{equation}
\label{5.2}
\begin{aligned}
\t\eta_{t,i}=~&(4-\frac{\rho_{t,i}}{2\pi})\log 2r_0+2\log C_{t,i}+\lambda_{t,i}+G_{t,i}(tp_{t,i})-\log\int_Mhe^{u_t-G_t^{(2)}}\\
&+(2+2\alpha_1+2\alpha_2-\frac{\rho_{t,i}}{2\pi})\log t+\tilde{\phi}_t(t(p_{t,i}+R_{t,i}^{-1}z))-\psi_t(p_{t,i}+R_{t,i}^{-1}z)+O(t^2)\\
=~&\Theta_{t,i}+O(t^2+t\|\tilde{\phi}_t\|_*),
\end{aligned}
\end{equation}
where
\begin{align*}
\Theta_{t,i}=~&(4-\frac{\rho_{t,i}}{2\pi})\log 2r_0+2\log C_{t,i}+\lambda_{t,i}+G_{t,i}(tp_{t,i})-\log\int_Mhe^{u_t-G_t^{(2)}}\nonumber\\
&+(2+2\alpha_1+2\alpha_2-\frac{\rho_{t,i}}{2\pi})\log t+\tilde{\phi}_t(t(p_{t,i}+r_{0,i}))-\psi_t(p_{t,i}+r_{0,i})
\end{align*}
for some $r_{0,i}\in\partial B_{2r_0}(0).$ Let
\begin{align*}
\Lambda_{t,i}=\max\frac{|\t\eta_{t,i}(z)|}{(t\|\t\phi_t\|_*+t^{2})(1+|z|)^{\e}},~z\in B_{2R_{t,i}r_0}(0).
\end{align*}
The statement of the lemma follows once we show that, $\Lambda_{t,i}=O(1).$ We prove this by contradiction. Suppose $\Lambda_{t,i}\rightarrow\infty$, then we use $z_{t,i}$ to denote the point where $\Lambda_{t,i}$ is assumed. Let
\begin{align*}
\bar{\eta}_{t,i}(z)=\frac{\t\eta_{t,i}(z)}{\Lambda_{t,i}(t\|\t\phi_t\|_*+t^{2})(1+|z_{t,i}|)^{\e}}.
\end{align*}
It is straightforward to see that $|\bar\eta_{t,i}(z)|\leq\frac{(1+|z|)^{\e}}{(1+|z_{t,i}|)^{\e}},$ which means that $\bar\eta_{t,i}(z)$ is uniformly bounded over any fixed compact subset of $\mathbb{R}^2.$ Therefore, we conclude that, along a subsequence, $\bar\eta_{t,i}$ converges in $C_{\mathrm{loc}}^2(\R^2)$ to a solution of the following problem:
\begin{align*}
\left\{\begin{array}{l}
\Delta\bar\eta_{0,i}+\frac{C_{0,i}}{(1+|z|^2)^2}\bar\eta_{0,i}=0,\\
\bar\eta_{0,i}(0)=\nabla\bar\eta_{0,i}(0)=0,~|\bar\eta_{0,i}(z)|\leq C(1+|z|)^{\e}.
\end{array}\right.
\end{align*}
If we suppose $z_{t,i}$ converges to $z_{0,i}\in\mathbb{R}^2$, then we have $\max|\bar\eta_{0,i}|=|\bar\eta_{0,i}(z_{0,i})|=1$ by continuity. But this is impossible, since the only bounded solution of the problem above is given by $\bar\eta_{0,i}\equiv0$ (see   \cite[Proposition 1]{bp}). Therefore, $z_{t,i}\rightarrow\infty.$
\medskip

By using $|\bar\eta_{t,i}(z_{t,i})|=1$, together with the boundary data (\ref{5.2}) and the Green representation formula, we have:
\begin{align}
\label{5.3}
\pm1=&\int_{B_{2R_{t,i}r_0}(0)}G_i(z_{t,i},z)
\Big[\frac{8C_{t,i}e^{\xi_{t,i}}\t\eta_{t,i}(z)(1+|z|)^{-\e}}{\Lambda_{t,i}(t\|\t{\phi}\|_*+t^{2})}\frac{(1+|z|)^{\e}}{(1+|z_{t,i}|)^{\e}}
+\frac{O(1)(1+|z|^{-3})}{\Lambda_{t,i}(1+|z_{t,i}|)^{\e}}\Big]\nonumber\\
&+\int_{B_{2R_{t,i}r_0}(0)}G_i(z_{t,i},z)\frac{O(1)(t^2e^{-\lambda_{t,i}}\sum_{j=1}^m\rho_{t,j})}
{\Lambda_{t,i}(t\|\t{\phi}\|_*+t^{2})(1+|z_{t,i}|)^{\e}}\\
&-\int_{\p B_{2R_{t,i}r_0}(0)}\p_{\nu}G_i(z_{t,i},z)\left(\frac{\Theta_{t,i}+O(t\|\t\phi_t\|_*+t^2)}
{\Lambda_{t,i}(t\|\t\phi_t\|_*+t^{2})(1+|z_{t,i}|)^{\e}}\right)\nonumber,
\end{align}
where $G_i$ is the Green's function over $B_{2R_{t,i}r_0}(0)$  with respect to the Dirichlet boundary condition. Recall that
\begin{align*}
G_i(a,z)=-\frac{1}{2\pi}\log|a-z|+\frac{1}{2\pi}\log\left(\frac{|a|}{2R_{t,i}r_0}
\left|\frac{(2R_{t,i}r_0)^2a}{|a|^2}-z\right|\right).
\end{align*}
By the definition it is not difficult to see that $\bar\eta_{t,i}(0)=o(1)$. Applying the Green's representation formula again, we have
\begin{align}
\label{5.4}
o(1)=&\int_{B_{2R_{t,i}r_0}(0)}G_i(0,z)
\Big[\frac{8C_{t,i}e^{\xi_{t,i}}\t\eta_{t,i}(z)(1+|z|)^{-\e}}{\Lambda_{t,i}(t\|\t{\phi}\|_*+t^{2})}\frac{(1+|z|)^{\e}}{(1+|z_{t,i}|)^{\e}}
+\frac{O(1)(1+|z|^{-3})}{\Lambda_{t,i}(1+|z_{t,i}|)^{\e}}\Big]\nonumber\\
&+\int_{B_{2R_{t,i}r_0}(0)}G_i(0,z)\frac{O(1)(t^2e^{-\lambda_{t,i}}\sum_{j=1}^m\rho_{t,j})}
{\Lambda_{t,i}(t\|\t{\phi}\|_*+t^{2})(1+|z_{t,i}|)^{\e}}\\
&-\int_{\p B_{2R_{t,i}r_0}(0)}\p_{\nu}G_i(0,z)\left(\frac{\Theta_{t,i}+O(t\|\t\phi_t\|_*+t^2)}
{\Lambda_{t,i}(t\|\t\phi_t\|_*+t^{2})(1+|z_{t,i}|)^{\e}}\right)\nonumber.
\end{align}
To deal with the two boundary integral terms in (\ref{5.3}) and (\ref{5.4}), we note that
$$\int_{\p B_{2R_{t,i}r_0}(0)}\p_{\nu}G_i(a,z)\mathrm{d}s=-1,~\forall a\in B_{2R_{t,i}r_0}.$$
As a consequence, we can show that the difference of the boundary integral terms is small. At the same time, using that both $\Lambda_{t,i}$ and $|z_{t,i}|\rightarrow+\infty$, we can easily see that the second term on the right hand side of (\ref{5.3}) and (\ref{5.4}) are both very small. Therefore, from (\ref{5.3}) and (\ref{5.4}), we can obtain
\begin{align}
\label{5.5}
1+o(1)=\int_{B_{2R_{t,i}r_0}(0)}|G_i(z_{t,i},z)-G_i(0,z)|\left(\frac{(1+|z|)^{-4+\e}}{(1+|z_{t,i}|)^{\e}}
+o(1)\frac{(1+|z|)^{-3+\e}}{(1+|z_{t,i}|)^{\e}}\right),
\end{align}
where we used that
\begin{align*}
\left|\frac{\t\eta_{t,i}(z)(1+|z|)^{\e}}{\Lambda_{t,i}(t\|\t\phi_t\|_*+t^{2})}\right|\leq 1,~e^{\xi_{t,i}(z)}\leq C(1+|z|)^{-4},~\Lambda_{t,i}\rightarrow\infty.
\end{align*}
To get a contradiction to (\ref{5.5}) we then show that the right hand side is $o(1)$. We consider two cases: If $|z_{t,i}|=o(1)R_{t,i}$,  then $G_i(z_{t,i},z)$ can be written as follows:
\begin{align*}
G_i(z_{t,i},z)=-\frac{1}{2\pi}\log|z_{t,i}-z|+\frac{1}{2\pi}\log(2R_{t,i}r_0)+o(1).
\end{align*}
In this case it is enough to observe that
\begin{align*}
\int_{B_{2R_{t,i}r_0}(0)}\left|\log\frac{|z_{t,i}-z|}{|z|}\right|
\left(\frac{(1+|z|)^{-4+\e}}{(1+|z_{t,i}|)^{\e}}+o(1)\frac{(1+|z|)^{-3+\e}}{(1+|z_{t,i}|)^{\e}}\right)\mathrm{d}z=o(1),
\end{align*}
which follows by standard elementary estimates.

Finally we need to consider the case $|z_{t,i}|\sim R_{t,i}$. Then, for the Green's function we use that
\begin{align*}
|G_i(z_{t,i},z)-G_i(0,z)|\leq C(\log(1+|z|)+\lambda_{t,i}),
\end{align*}
and so it is easy to obtain that the right hand side of (\ref{5.5}) is $o(1).$ Thus we have reached a contradiction and the proof of the Lemma \ref{le5.1} is completed.
\end{proof}

\section{Appendix B: The proof of Proposition \ref{pr2.new}}
In this section, we shall provide a complete proof of Proposition \ref{pr2.new}.
\medskip

\noindent{\em Proof of Proposition \ref{pr2.new}.} Part (i) about  the radial symmetry of the solution has been recently established (in a more general context) by Gui-Moradifam in \cite[Theorem 5.2]{gm}. While, the existence, uniqueness and non-degeneracy properties of the radial solutions, when $\rho\in(8\pi\max\{1,\alpha\},8\pi(\alpha+1))$, have been shown by the second author in \cite{l0}, whose proof extends also when $\rho=8\pi\alpha$ and $\alpha>1.$ Thus, we are left to show (iii) for $\alpha>1$. To this purpose we recall that, if $\alpha>1$ and (\ref{new13}) admits a solution then necessarily,
\begin{equation}
\label{n8.1}
4\pi(\alpha+1)<\rho<8\pi(\alpha+1).
\end{equation}
Inequality (\ref{n8.1}) follows by the integrability condition and a Pohozaev type identity obtained in the usual way, see \cite{l0} for details.

To proceed further, we follow \cite{l0} and consider the Cauchy problem associated to \eqref{new13}, namely:
\begin{equation}
\label{n8.2}
\left\{\begin{array}{l}
-(rv'(r))'=r(1+r^2)^\alpha e^{v(r)},\\
v(0)=a,\quad v'(0)=0
\end{array}\right.
\end{equation}
with $\alpha>-1$ and $a\in\mathbb{R}.$ From \cite{l0}, we know that problem (\ref{n8.2}) admits a unique solution $v_{\alpha}(r,a)$ defined for any $r\geq0$ and such that:
\begin{equation}
\label{n8.3}
\beta_{\alpha}(a):=\int_0^{\infty}(1+r^2)^{\alpha}e^{v_{\alpha}(r,a)}r\mathrm{d}r<+\infty.
\end{equation}
So, the full range of $\rho$ for which (\ref{new13}) admits a radial solution is described by the image of the function: $2\pi\beta_{\alpha}(a),a\in\mathbb{R}.$ Clearly $\beta_{\alpha}(a)$ depends continuously on both parameters $(\alpha,a)\in(-1,+\infty)\times\mathbb{R},$ and actually for fixed $\alpha>-1$, we easily check that $\beta_{\alpha}\in C^2(\mathbb{R}),$ see \cite{l0}.

From (\ref{n8.1}), we know in particular that
\begin{equation}
\label{n8.4}
\mathrm{if}~\alpha>1~\mathrm{then}~\beta_{\alpha}(a)\in(2(\alpha+1),4(\alpha+1)).
\end{equation}
Furthermore, we recall that $\beta_{\alpha}(a)$ admits a limit as $a\to\pm\infty,$ and the following holds:
\begin{equation}
\label{n8.5}
\lim_{a\to-\infty}\beta_{\alpha}(a)=4(\alpha+1)~\mathrm{and}~\lim_{a\to+\infty}\beta_{\alpha}(a)=4\max\{\alpha,1\}.
\end{equation}
Properties (\ref{n8.5}) have been established in \cite[Lemma 2.2]{l0}. For later use we show that (\ref{n8.5}) follow by a suitable blow-up (blow-down) argument. Indeed, by setting
\begin{equation*}
\lambda_{a}=e^{-\frac{a}{2(\alpha+1)}}\to+\infty~\mathrm{as}~a\to-\infty.
\end{equation*}
We easily get that the scaled (blow-down) function:
\begin{equation*}
V_a(r):=v_{\alpha}(\lambda_ar,a)-a\to\log\left(\frac{1}{(1+\frac{1}{8(\alpha+1)^2}r^{2(\alpha+1)})^2}\right)
\end{equation*}
as $a\to-\infty$, uniformly in $C_{\mathrm{loc}}^2.$ This information together with (\ref{n8.4}), suffices to deduce that, $$\lim_{a\to-\infty}\beta_{\alpha}(a)=4(\alpha+1).$$
To show that $\lim_{a\to+\infty}\beta_{\alpha}(a)=4\alpha$ is more delicate. Indeed, while we easily see that $v_{\alpha}(r,a)$ admits a blow up point at the origin, as $a\to+\infty$, in order to determine the limit of $\beta_{\alpha}(a),$ we need to control the behavior of $v_{\alpha}(r,a)$ at infinity. To this end, we consider:
\begin{equation}
\label{n8.6}
\hat v_{\alpha}(r,a)=v_{\alpha}(\frac1r,a)+\beta_{\alpha}(a)\log\frac{1}{r},
\end{equation}
which extends smoothly at $r=0$ and satisfies:
\begin{equation}
\label{n8.7}
\begin{cases}
-(r\hat v_{\alpha}'(r,a))'=r^{\beta_{\alpha}(a)-2(\alpha+2)+1}(1+r^2)^{\alpha}e^{\hat v_{\alpha}(r,a)},\\
\beta_{\alpha}(a)=\int_0^{\infty}r^{\beta_{\alpha}(a)-2(\alpha+2)+1}(1+r^2)^{\alpha}e^{\hat v_{\alpha}(r,a)}\mathrm{d}r.
\end{cases}
\end{equation}
Notice that the blow up analysis of \cite{bt1,bm,ls}, can be applied to both $v_{\alpha}(r,a)$ and $\hat v_{\alpha}(r,a),$ as $a\to+\infty$ and implies the following:
\begin{itemize}
\item if $-1<\alpha\leq1$ then $\hat v_{\alpha}(r,a)$ cannot blow up (at the origin) as $a\to+\infty$, and $\lim_{a\to+\infty}\beta_{\alpha}(a)=4;$
\item if $\alpha>1$ then both $v_{\alpha}(r,a)$ and $\hat v_{\alpha}(r,a)$ must blow up at the origin (and only at the origin). More precisely, along a sequence $a_n\to+\infty$, the sequence $v_{\alpha}(r,a_n)$ blows up only at the origin with blow up mass equal to $8\pi$, and $\hat v_{\alpha}(r,a_n)$ blows up only at the origin with blow-up mass equal to $2\pi(2\beta_{\infty}-4(\alpha+1)),$ where $\beta_{\infty}=\lim_{n\to+\infty}\beta_{\alpha}(a_n)$. As a consequence, by the "concentration" property, for $\beta_{\infty}$ we obtain the identity: $2\pi\beta_{\infty}=8\pi+2\pi(2\beta_{\infty}-4(\alpha+1))$ that gives: $\beta_{\infty}=4\alpha.$ Since this holds along any $a_n\to+\infty$, as $n\to+\infty$, we conclude that
    $$\beta_{\alpha}(a)\to4\alpha,~\mathrm{as}~a\to+\infty,$$
    as claimed.
\end{itemize}

\noindent {\bf Remark 8.1}: For later use, notice that the blow up property at the origin remains valid also for the sequences:
$$v_{\alpha_n}(r,a_n)\quad\mathrm{and}\quad\hat v_{\alpha_n}(r,a_n).$$
When
$$\alpha_n\to\alpha>1\quad\mathrm{and}\quad a_n\to+\infty,~\mathrm{as}~n\to\infty,$$
and it implies that
$$\beta_{\alpha_n}(a_n)\to4\alpha,~\mathrm{as}~n\to+\infty.$$

To proceed further, let
$$\varphi_\alpha(r,a)=\frac{\partial}{\partial a}v_{\alpha}(r,a),$$
which defines a solution of the following linearized problem around $v_\alpha(r,a):$
\begin{equation}
\label{n8.8}
\begin{cases}
-(r\varphi_{\alpha}'(r,a))'=r(1+r^2)^{\alpha}e^{v_{\alpha}(r,a)}\varphi_{\alpha}(r,a),\quad r>0,\\
\varphi_{\alpha}(0,a)=1,\quad \varphi_{\alpha}'(0,a)=0,
\end{cases}
\end{equation}
and
\begin{equation}
\label{n8.9}
\beta_{\alpha}'(a)=\int_0^{+\infty}r(1+r^2)^{\alpha}e^{v_{\alpha}(r,a)}\varphi_{\alpha}(r,a)\mathrm{d}r.
\end{equation}
Similarly,
\begin{equation}
\label{n8.10}
\hat\varphi_{\alpha}(r,a)=\varphi_{\alpha}(\frac1r,a)
\end{equation}
satisfies
\begin{equation}
\label{n8.11}
-(r\hat\varphi_{\alpha}'(r,a))'=r^{\beta_{\alpha}(a)-2(\alpha+2)+1}(1+r^2)^{\alpha}e^{\hat v_{\alpha}(r,a)}\hat\varphi_{\alpha}(r,a),~ r>0.
\end{equation}
From \cite{l0} we know that,
\begin{equation}
\label{n8.12}
\begin{aligned}
&\varphi_{\alpha}(r,a)~\mbox{vanishes at least twice and it is bounded}\\
&\mbox{(with a finite limit at $+\infty$) if and only if}~ \beta_{\alpha}'(a)=0.
\end{aligned}
\end{equation}
Clearly, $\varphi_{\alpha}(r,a)$ is analytic with respect to $r$, and from (\ref{n8.8}) we can check easily that:
\begin{itemize}
  \item if $0<r_{a,\alpha}<R_{a,\alpha}$ are the first and last zero of $\varphi_{\alpha}(r,a)$ and $0<r^*_{a,\alpha}\leq R^*_{a,\alpha}$ are the first and last zero of $\varphi'_{\alpha}(r,a)$, then
\begin{equation}
\label{n8.13}
r_{a,\alpha}<r^*_{a,\alpha}\leq R_{a,\alpha}^*<R_{a,\alpha}.
\end{equation}
\end{itemize}
Furthermore, by means of Alexandroff-Bol isoperimetric inequality, it is proved in \cite[Lemma 3.3]{l0} that,
\begin{equation}
\label{n8.14}
\int_0^{r_{a,\alpha}^*}r(1+r^2)^{\alpha}e^{v_{\alpha}(r,a)}dr>4,\quad \int_{R_{a,\alpha}^*}^{+\infty}r(1+r^2)^{\alpha}e^{v_{\alpha}(r,a)}dr>2\beta_{\alpha}(a)-4(\alpha+1).
\end{equation}

Next, we use the additional information (specific to problem \eqref{new13}) that the value: $\beta=2(\alpha+2)$ is attained by the radial function: $v(r)=\log(\frac{4(\alpha+2)}{(1+r^2)^{\alpha+2}})$, which satisfies (\ref{n8.2}) with
\begin{equation}
\label{n8.16}
a=\log(4(\alpha+2))=:a_{\alpha}.
\end{equation}
In other words, we have:
\begin{equation}
\label{n8.15}
v_{\alpha}(r,a_{\alpha})=\log\left(\frac{4(\alpha+2)}{(1+r^2)^{\alpha+2}}\right) ~\mathrm{and}~\beta_{\alpha}(a_{\alpha})=2(\alpha+2).
\end{equation}

Since, $2(\alpha+2)<4\alpha$ if and only if $\alpha>2,$ by virtue of (\ref{n8.5}) and the continuity of $\beta_{\alpha}(a),$ we find that for $\alpha>2$ there holds:
\begin{equation}
\label{n8.17}
\exists~
a_{\alpha}^*\in\mathbb{R}:~\beta_{\alpha}(a_{\alpha}^*)=\min_{a\in\mathbb{R}}\beta_{\alpha}(a)\in(2(\alpha+1),4\alpha).
\end{equation}
Furthermore, we recall from \cite{l0} that we have:
\begin{equation}
\label{n8.18}
\beta_{\alpha}'(a)<0,~\forall a\in\beta_{\alpha}^{-1}([4\alpha,4(\alpha+1)))~\mathrm{and}~\alpha>1.
\end{equation}
By (\ref{n8.18}), in particular we see that (\ref{n8.17}) remains valid also for $\alpha=2$, where $2(\alpha+2)=4\alpha$, see also \cite{det} for a discussion of this situation. Therefore, it remains to show that (\ref{n8.17}) actually holds also when $\alpha\in(1,2).$ To this purpose, we let
\begin{equation}
\label{n8.19}
\Lambda:=\left\{\alpha\in(1,2):~(\ref{n8.17})~\mbox{holds}\right\}.
\end{equation}
By means of a continuity argument (with respect to $\alpha$) we can check that $\Lambda$ is not empty (by using continuity at $\alpha=2$) and is open. So, in order to obtain that $\Lambda=(1,2)$, it suffices to show that $\Lambda$ is closed relatively to the set $(1,2).$ Namely, we need to prove that
\begin{equation}
\label{n8.20}
\mbox{if}~\alpha_n\in\Lambda~\mathrm{and}~\alpha_n\to\alpha\in(1,2)~\mbox{then}~\alpha\in\Lambda.
\end{equation}
To establish (\ref{n8.20}) we let
\begin{equation}
\label{n8.21}
a_n=a_{\alpha_n}^*,\quad\beta_n:=\beta_{\alpha_n}(a_{\alpha_n}^*)=\min_{a\in\mathbb{R}}\beta_{\alpha_n}(a)\in(2(\alpha_n+1),4\alpha_n).
\end{equation}
Since
\begin{equation}
\label{n8.22}
\beta_{\alpha_n}'(a_n)=0,
\end{equation}
we know that
\begin{equation}
\label{n8.23}
\varphi_n(r)=\varphi_{\alpha_n}(r,a_n)
\end{equation}
defines a bounded solution of the linearized problem (\ref{n8.8}) around the solution $v_n(r):=v_{\alpha_n}(r,a_n)$. In particular we know that $\varphi_n(r)$ admits a finite limit as $r\to+\infty$, and by setting
\begin{equation}
\label{n8.24}
\varphi_n(\infty)=\lim_{r\to+\infty}\varphi_n(r)\quad\mathrm{and}\quad\hat\varphi_n(r)=\varphi_n(1/r),
\end{equation}
we find that
\begin{equation}
\label{n8.25}
\begin{cases}
-(r\hat\varphi_n'(r))=r^{\beta_n-2(\alpha_n+2)+1}(1+r^2)^{\alpha}e^{\hat v_n(r)}\hat\varphi_n(r),~\forall r>0,\\
\hat\varphi_n(0)=\varphi_n(\infty),~\quad\hat\varphi_n'(0)=0.
\end{cases}
\end{equation}
with $\hat v_n(r):=\hat v_{\alpha_n}(r,a_n).$ Furthermore, by virtue of (\ref{n8.13}) and (\ref{n8.14}) we know that, for the first zero $r_n$ and last zero $R_n$ of $\varphi_n$ and the first zero $r_n^*$ and last zero $R_n^*$ of $\varphi_n'$ the following holds:
\begin{equation}
\label{n8.26}
0<r_n<r_n^*\leq R_n^*<R_n,
\end{equation}
and
\begin{equation}
\label{n8.27}
\int_0^{r_n^*}r(1+r^2)^{\alpha}e^{v_n(r)}dr>4.
\end{equation}
To establish (\ref{n8.20}) it suffices to show that,
\begin{equation}
\label{n8.28}
a_n~\mbox{is uniformly bounded.}
\end{equation}
Indeed, if (\ref{n8.28}) holds, then along a subsequence, we have
\begin{equation*}
a_n\to a_0,~\mathrm{as}~n\to+\infty,
\end{equation*}
and by the uniform continuity of $\beta_{\alpha}(a)$ and $\beta_{\alpha}'(a)$ on compact sets of the parameters $(\alpha,a)$, we obtain that
$$\beta_{\alpha_n}(a_n)\to\beta_{\alpha}(a_0)=\min_{a\in\mathbb{R}}\beta_{\alpha}(a)~\mathrm{as}~n\to+\infty,$$
and
$$\beta_{\alpha}'(a_0)=0.$$
So, we can use (\ref{n8.4}) and (\ref{n8.18}) to deduce that $\beta_{\alpha}(a_0)\in(2(\alpha+1),4\alpha)$ and thus $\alpha\in\Lambda,$ as claimed.

Now it remains to prove (\ref{n8.28}). To this purpose, we start by observing that, for $1<\alpha<2$, the value $a_{\alpha}:=\log(4(1+\alpha))\in\beta_{\alpha}^{-1}((4\alpha,4(\alpha+1)))$, (see (\ref{n8.15}) and (\ref{n8.16}))
and so we can use (\ref{n8.18}) to derive that $\beta_{\alpha}(a)$ is strictly decreasing in $(-\infty,a_{\alpha}]$. As a consequence, for $\alpha\in(1,2),$ if $a\in\mathbb{R}$ and $\beta_{\alpha}(a)<4\alpha<2(\alpha+2)$ then necessarily $a>a_{\alpha}=\log(4(1+\alpha))$. Therefore, $a_n=a_{\alpha_n}^*>\log4(1+\alpha_n),~\forall n\in\mathbb{N}$, and so $a_n$ is uniformly bounded from below. To check that $a_n$ is also bounded from above, we argue by contradiction and assume that (along a subsequence):
\begin{equation}
\label{n8.29}
a_n\to+\infty.
\end{equation}
Thus, by Remark 8.1, we know that both $v_n$ and $\hat v_n$ admit a blow-up point at the origin (and only at the origin), (in the sense of \cite{bt1,bm,ls}) and in particular
\begin{equation}
\label{n8.30}
\hat v_n(0)\to+\infty,~\mathrm{and}~\beta_n\to4\alpha~\mathrm{as}~n\to+\infty.
\end{equation}
Therefore, by letting
\begin{equation}
\label{n8.31}
\varepsilon_n=e^{-\frac{v_n(0)}{2}}=e^{-\frac{a_n}{2}}\to 0~\mathrm{and}~\hat\varepsilon_n=e^{-\frac{\hat v_n(0)}{\beta_n-2(\alpha_n+1)}}\to0,~\mathrm{as}~n\to+\infty;
\end{equation}
for the scaled functions:
\begin{equation}
\label{n8.32}
V_n(r)=v_n(\varepsilon_nr)+2\log\varepsilon_n~\mathrm{and}~\hat V_n(r)=\hat v_n(\hat\varepsilon_nr)+(\beta_n-2(\alpha_n+1))\log\hat\varepsilon_n,
\end{equation}
we find that
\begin{equation}
\label{8.33}
\begin{cases}
-(rV_n'(r))'=r(1+\varepsilon^2_nr^2)^{\alpha}e^{V_n(r)},~\quad r>0,\\
0=V_n(0)=\max_{r\geq0}V_n(r),\\
\beta_n=\int_0^{\infty}(1+\varepsilon^2_nr^2)e^{V_n(r)}r\mathrm{d}r,
\end{cases}
\end{equation}
and
\begin{equation}
\label{n8.34}
\begin{cases}
-(r\hat V_n'(r))'=r^{\beta_n-2(\alpha_n+2)+1}(1+\hat\varepsilon^2_nr^2)^{\alpha}e^{\hat V_n(r)},~\quad r>0,\\
0=\hat V_n(0)=\max_{r\geq0}\hat V_n(r),\\
\beta_n=\int_0^{\infty}r^{\beta_n-2(\alpha_n+2)+1}(1+\hat\varepsilon^2_nr^2)e^{\hat V_n(r)}r\mathrm{d}r.
\end{cases}
\end{equation}
Thus, we can use well known Harnack type inequality, (as in \cite{bt1,bm,ls}) to conclude that (along a subsequence)
$$V_n\to V~\mbox{uniformly in}~C_{\mathrm{loc}}^2,~\mathrm{as}~n\to+\infty$$
with $V(r)=\log\frac{1}{(1+\frac18r^2)^2}$, the unique radial solution of the Liouville equation:
\begin{equation}
\label{n8.35}
\begin{cases}
-(rV'(r))'=re^{V(r)},~r>0,\\
0=V(0)=\max_{r\geq0} V(r),~\int_0^{+\infty}e^{V(r)}r\mathrm{d}r<+\infty;
\end{cases}
\end{equation}
and
$$\hat V_n(r)\to\hat V~\mbox{uniformly in}~C_{\mathrm{loc}}^{1,\gamma},~\mathrm{as}~n\to+\infty$$
with $\hat V(r)=\log\frac{1}{(1+\frac{1}{8(\alpha-1)^2}r^{2(\alpha-1)})^2}$, the unique radial solution of the (singular) Liouville equation:
\begin{equation}
\label{n8.36}
\begin{cases}
-(r\hat V'(r))'=r^{1-2(2-\alpha)}e^{\hat V(r)},~r>0,\\
0=\hat V(0)=\max_{r\geq0} \hat V(r),~\int_0^{+\infty}r^{1-2(2-\alpha)}e^{\hat V(r)}\mathrm{d}r<+\infty.
\end{cases}
\end{equation}
Since $\int_0^{+\infty}re^{V(r)}\mathrm{d}r=4$ (see \cite{ls}), from (\ref{n8.27}) we derive in particular that,
\begin{equation}
\label{n8.37}
\frac{r_n^*}{\varepsilon_n}\to+\infty,~\mathrm{as}~n\to+\infty,
\end{equation}
where $r_n^*$ is the first zero of $\varphi_n'.$

Next to compare the blow up rates $\varepsilon_n$ and $\hat\varepsilon_n$, we recall the following profile estimates, valid for $v_n$ and $\hat v_n$ respectively:
\begin{equation}
\label{n8.38}
\left|v_n(r)-\log\frac{e^{v_n(0)}}{(1+\frac{e^{v_n(0)}}{8}r^2)^2}\right|\leq C, ~\forall r: 0\leq r\leq 1;
\end{equation}
and
\begin{equation}
\label{n8.39}
\left|\hat v_n(r)-\log\frac{e^{\hat v_n(0)}}{(1+\frac{e^{\hat v_n(0)}}{8(\beta_n-2(\alpha_n+1))}r^{\beta_n-2(\alpha_n+1)})^2}\right|\leq C, ~\forall r: 0\leq r\leq 1;
\end{equation}
with a suitable constant $C>0$. We notice that, for negative powers in (\ref{n8.7}), as it occurs in our situation, a detailed proof of the pointwise estimate (\ref{n8.30}) can be found in \cite{bt2}.

Using (\ref{n8.38}) and (\ref{n8.39}) with $r=1$, we deduce the following:
\begin{equation*}
a_n=v_n(0)=\hat v_n(0)+O(1),~\forall n\in\mathbb{N}.
\end{equation*}
As a consequence, we can estimate
\begin{equation}
\label{n8.40}
\frac{\hat\varepsilon_n}{\varepsilon_n}=\frac{e^{\frac{v_n(0)}{2}}}{e^{\frac{\hat v_n(0)}{\beta_n-2(\alpha_n+1)}}}\leq Ce^{\frac{\beta_n-2(\alpha_n+2)}{2(\beta_n-2(\alpha_n+1))}v_n(0)}
\end{equation}
with $C>0$ a suitable constant. Thus, by recalling that: $\alpha_n\to\alpha\in(1,2)$ and $\beta_n\to4\alpha$, from (\ref{n8.40}) we derive:
\begin{equation}
\label{n8.41}
\frac{\hat\varepsilon_n}{\varepsilon_n}\to 0,~\mathrm{as}~n\to+\infty.
\end{equation}
At this point we can use the information about the linearized problem around $v_n$ and $\hat v_n$. To this purpose, by recalling that: $\varphi_n(\infty)=\lim_{r\to+\infty}\varphi_n(r)\in\mathbb{R},$ we can define $t_n\in[0,+\infty]$ such that:
\begin{equation}
\label{n8.42}
\max_{r\geq0}|\varphi_n(r)|=|\varphi_n(t_n)|.
\end{equation}
We set
\begin{equation}
\label{n8.43}
\psi_n(r)=\frac{\varphi_n(\varepsilon_nr)}{|\varphi_n(t_n)|}~\mathrm{and}~\hat\psi_n(r)=\frac{\varphi_n(\frac{\hat\varepsilon_n}{r})}{|\varphi_n(t_n)|}
\end{equation}
and notice that $\hat\psi_n$ extends smoothly at $r=0$. Furthermore, the following holds:
\begin{equation*}
\begin{cases}
-(r\psi_n'(r))'=r(1+\varepsilon_n^2r^2)^{\alpha}e^{V_n(r)}\psi_n(r),~r>0,\\
|\psi_n(r)|\leq 1,~\forall r\geq0,
\end{cases}
\end{equation*}
and
\begin{equation*}
\begin{cases}
-(r\hat\psi_n'(r))'=r^{\beta_n-2(\alpha_n+2)+1}(1+\hat\varepsilon_n^2r^2)^{\alpha}e^{\hat V_n(r)}\hat\psi_n(r),~r>0,\\
|\hat\psi_n(r)|\leq 1,~\forall r\geq0.
\end{cases}
\end{equation*}
As a consequence, along a subsequence, we find that
\begin{equation}
\label{n8.44}
\psi_n\to\psi~\mathrm{and}~\hat\psi_n\to\hat\psi,~\mathrm{as}~n\to+\infty
\end{equation}
uniformly in $C_{\mathrm{loc}}^2$, with $\psi$ and $\hat\psi$ respectively bounded radial solution of the linearized problem around the solution: $V(r)=\log\frac{1}{(1+\frac{r^2}{8})^2}$ of problem (\ref{n8.35}) and of the linearized problem around the solution: $\hat V(r)=\log\frac{1}{(1+\frac{1}{8(\alpha-1)^2}r^{2(\alpha-1)})^2}$ of problem (\ref{n8.36}). Therefore, we have:
\begin{equation}
\label{n8.45}
\begin{aligned}
\psi(r)=\mu\left(\frac{1-\frac18r^2}{1+\frac18r^2}\right),~\mu\in\mathbb{R},\quad\mathrm{and}\quad
\hat\psi(r)=\hat\mu\left(\frac{1-\frac{r^{2(\alpha-1)}}{8(\alpha-1)^2}}{1+\frac{r^{2(\alpha-1)}}{8(\alpha-1)^2}}\right),~\hat\mu\in\mathbb{R}.
\end{aligned}
\end{equation}
In the following, we divide our discussion into two cases.
\medskip

\noindent Case (1). $t_n\neq0,$ then necessarily either $t_n=+\infty$ or $t_n\in(0,+\infty)$ and $\varphi_n'(t_n)=0.$ In particular, in the later case we have (see (\ref{n8.26})):
\begin{equation}
\label{n8.46}
0<r_n\leq r_n^*\leq t_n\leq R_n^*<R_n,
\end{equation}
where $r_n$ and $R_n$ are the first and last zero of $\varphi_n.$ Therefore, if we set
\begin{equation*}
\hat r_n=
\begin{cases}
0,~&\mathrm{if}~t_n=+\infty,\\
\frac{\hat\varepsilon_n}{t_n},~&\mathrm{if}~t_n\in(0,+\infty),
\end{cases}
\end{equation*}
then, by virtue of (\ref{n8.37}), (\ref{n8.41}) and (\ref{n8.46}) we find that $\hat r_n\to0$ as $n\to+\infty$, and moreover by (\ref{n8.43}) we have: $|\hat\psi_n(\hat r_n)|=1.$
Consequently,
\begin{equation}
\label{n8.47}
|\hat\psi(0)|=1.
\end{equation}
On the other hand, by (\ref{n8.37}), (\ref{n8.41}) and (\ref{n8.46}) we see also that for the last zero $R_n$ of $\varphi_n$ there holds:
\begin{equation*}
\hat\psi_n(\frac{\hat\varepsilon_n}{R_n})=0~\mathrm{and}~\frac{\hat\varepsilon_n}{R_n}\to0~\mathrm{as}~n\to+\infty.
\end{equation*}
This implies that $\hat\psi(0)=0$, in contradiction to (\ref{n8.47}).
\medskip

\noindent Case (2). $t_n=0$, then we have
$$1=\varphi_n(0)=\max_{r\geq0}|\varphi_n(r)|;$$
and so
\begin{equation}
\label{n8.48}
\psi_n(r)=\varphi(\varepsilon_nr)\to\psi(r)=\frac{1-\frac18r^2}{1+\frac18r^2},~\mathrm{as}~n\to+\infty
\end{equation}
uniformly in $C_{\mathrm{loc}}^2.$ In fact, if we let $r_0>0:~r_0^{2(\alpha-1)}=8(\alpha-1)$, then in view of (\ref{n8.44}) and (\ref{n8.45}) we find that
$$\hat\psi_n(r_0)\to0~\mathrm{as}~n\to+\infty.$$
While, by (\ref{n8.41}), (\ref{n8.43}) and (\ref{n8.48}) we have:
\begin{equation*}
\hat\psi_n(r_0)=\varphi_n(\frac{\hat\varepsilon_n}{r_0})=\psi_n(\frac{\hat\varepsilon_n}{\varepsilon_n}\frac{1}{r_0})\to1,~\mathrm{as}~n\to+\infty,
\end{equation*}
and we reach again a contradiction. Therefore (\ref{n8.29}) is not true and (\ref{n8.28}) is established. As a consequence, for all $\alpha>1$, (\ref{n8.17}) holds and problem (\ref{new13}) admits a radial solution if and only if
\begin{equation*}
\rho\in[\bar\rho_{\alpha},8\pi(\alpha+1)),~\mathrm{with}~\bar\rho_{\alpha}=2\pi\beta_{\alpha}(a_{\alpha}^*).
\end{equation*}
In addition by virtue of (\ref{n8.18}) (see \cite{l1}), we know that problem (\ref{new13}) admits a unique radial solution for $\rho\in[8\pi\alpha,8\pi(\alpha+1))$. While for $\rho\in(\bar\rho_{\alpha},8\pi\alpha)$ there exists at least two values:
\begin{equation*}
a_1<a_{\alpha}^*<a_2~\mathrm{such~ that}~2\pi\beta_{\alpha}(a_1)=\rho=2\pi\beta_{\alpha}(a_2);
\end{equation*}
So we obtain at least two radial solutions for (\ref{new13}) in this case, and the proof is completed. \hfill $\square$
\medskip

It is an interesting open question to see whether problem (\ref{new13}) admits no solution (i.e. not necessarily radial) for $\rho\in(4\pi(\alpha+1),\bar\rho_{\alpha}).$
\medskip

We point out that the results we have used from \cite{gm} and \cite{l0} apply to more general Liouville-type problems of the type:
\begin{equation}
\label{n8.49}
-\Delta v=K(|x|)e^{v}~\mathrm{in}~\mathbb{R}^2,\quad \rho=\int_{\mathbb{R}^2}K(|x|)e^v
\end{equation}
with a weight function $K>0$ satisfying:
\begin{equation}
\label{n8.50}
\Delta\log K\geq 0~\mathrm{and}~\lim_{r\to+\infty}\frac{rK'(r)}{K(r)}=2\alpha.
\end{equation}
In other words, properties (i) and (ii) of Proposition \ref{pr2.new} continue to hold for problem (\ref{n8.49})-( \ref{n8.50}). On the contrary, part (iii) is specific of problem (\ref{new13}), namely when we choose $K(r)=(1+r^2)^\alpha$. For example, we mention that if we consider the (apparently) similar weight function: $K(r)=(1+r^{2\alpha})$, then for $\alpha>1$, problem (\ref{n8.49}) admits a radial solution if and only if $\rho\in(8\pi\alpha,8\pi(\alpha+1))$, in striking contrast to (iii) of Proposition \ref{pr2.new}. We refer the readers to \cite{pt} for details.

\vspace{1cm}

\end{document}